\newtheorem{theorem}{Theorem}[section]
\newtheorem{definition}{Definition}[section]
\newtheorem{lemma}{Lemma}[section]
\newtheorem{remark}{Remark}[section]
\newtheorem{corollary}{Corollary}[section]
\numberwithin{equation}{section}
\begin{document}
\title[Navier-Stokes equations]{   On classical solutions for viscous polytropic fluids with degenerate viscosities  and vacuum}

\author{yachun li}
\address[Y. C. Li]{Department of Mathematics, MOE-LSC, and SHL-MAC, Shanghai Jiao Tong University,
Shanghai 200240, P.R.China} \email{\tt ycli@sjtu.edu.cn}

\author{ronghua pan  }
\address[R. H. Pan]{School of Mathematics, Georgia Tech
Atlanta 30332, U.S.A.} \email{\tt panrh@math.gatech.edu }

\author{Shengguo Zhu}
\address[S. G.  Zhu]{Department of Mathematics, Shanghai Jiao Tong University,
Shanghai 200240,  P.R.China;
School of Mathematics, Georgia Institute of Technology, Atlanta 30332, U.S.A.}
\email{\tt zhushengguo@sjtu.edu.cn}

\begin{abstract}
In this paper, we consider the three-dimensional  isentropic Navier-Stokes equations for compressible fluids allowing initial vacuum when viscosities depend on density in a superlinear power law. We introduce the notion of regular solutions and prove the local-in-time well-posedness of solutions with arbitrarily large initial data  and vacuum in this class, which is a long-standing open problem due to the very high degeneracy caused by vacuum.  Moreover,  for certain classes of initial data with local vacuum, we show that the regular solution that we obtained will break down in finite time,  no matter how small and smooth the initial data are.
\end{abstract}

\date{April 3, 2015}
\subjclass[2010]{Primary: 35B40, 35A05, 76Y05; Secondary: 35B35, 35L65} \keywords{Compressible Navier-Stokes,  vacuum, degenerate viscosity,  classical solutions,    blowup}.\\

\maketitle

\section{Introduction}\ \\

In this paper, we investigate the local-in-time well-posedness and formation of singularities of classical solutions to the compressible Navier-Stokes equations  for isentropic flows when viscosity coefficients, shear and bulk, are both degenerate and the initial data are arbitrarily large with possible vacuum states.  The system of compressible isentropic Navier-Stokes equations in $\mathbb{R}^3$ reads as
\begin{equation}
\label{eq:1.1}
\begin{cases}
\rho_t+\text{div}(\rho u)=0,\\[4pt]
(\rho u)_t+\text{div}(\rho u\otimes u)
  +\nabla
   P =\text{div} \mathbb{T}.
\end{cases}
\end{equation}
We look for local classical solution with initial data
\begin{equation} \label{eq:2.211m}
(\rho,u)|_{t=0}=(\rho_0(x),u_0(x)),\quad x\in \mathbb{R}^3,
\end{equation}
and far field behavior
\begin{equation} \label{eq:2.211}
(\rho,u)\rightarrow (\overline{\rho},0) \quad \text{as } \quad |x|\rightarrow +\infty,\quad t> 0,
\end{equation}
where $\overline{\rho} \geq 0$ is a  constant.

In system (\ref{eq:1.1}), $x=(x_1,x_2,x_3)\in \mathbb{R}^3$, $t\geq 0$ are space and time variables, respectively. $\rho$ is the density, and $u=\left(u^{(1)},u^{(2)},u^{(3)}\right)^\top\in \mathbb{R}^3$ is the velocity of the fluid. We assume that the pressure  $P$ satisfies
\begin{equation}
\label{eq:1.2}
P=A\rho^{\gamma}, \quad \gamma> 1,
\end{equation}
where $A$ is a positive constant, $\gamma$ is the adiabatic exponent. $\mathbb{T}$ denotes the viscosity stress tensor with the  form
\begin{equation}
\label{eq:1.3}
\begin{split}
&\mathbb{T}=\mu(\rho)(\nabla u+(\nabla u)^\top)+\lambda(\rho)\, \text{div}u\, \mathbb{I}_3,\\
\end{split}
\end{equation}
 where $\mathbb{I}_3$ is the $3\times 3$ identity matrix. We assume in this paper that
\begin{equation}
\label{fandan}
\mu(\rho)=\alpha\rho^\delta,\quad \lambda(\rho)=\beta\rho^\delta,
\end{equation}
 $\mu(\rho)$ is the shear viscosity coefficient, $\lambda(\rho)+\frac{2}{3}\mu(\rho)$ is the bulk viscosity coefficient, $\alpha$ and $\beta$ are both constants satisfying the following physical constraints
 \begin{equation}\label{10000}\alpha>0,\quad  2\alpha+3\beta\geq 0.
 \end{equation}
Furthermore, in this paper, we require that the constant $\delta$ satisfies
 \begin{equation}\label{100001} 1<\delta \leq \min \Big\{ \frac{\gamma+1}{2},3\Big\}.
 \end{equation}

In the theory of gas dynamics, when we derive the compressible Navier-Stokes equations from the Boltzmann equations through the Chapman-Enskog expansion to the second order, cf. Chapman-Cowling \cite{chap} and Li-Qin \cite{tlt}, we find that the viscosity coefficients are not constants but functions of absolute temperature in a power law. For isentropic flow,  this dependence on the absolute temperature, from the laws of Boyle and Gay-Lussac, can be reduced to the dependence on the density, that is to say, the viscosity coefficients are proportional to powers of density, see \cite{tlt}.

When $\inf_x {\rho_0(x)}>0$, it is well-known that the local existence of classical solutions for \eqref{eq:1.1}--\eqref{eq:2.211} can be obtained by a standard Banach fixed point argument due to the contraction property of the solution operators of the linearized problem, c.f. Nash \cite{nash}. However, this approach does not work when $\inf_x {\rho_0(x)}=0$, which occurs when some physical requirements are imposed, such as finite total initial mass,  finite total initial energy, or vacuum appearing locally in some open sets.

When viscosity coefficients $\mu$ and $\lambda$ are both constants, for the existence of  solutions of the three-dimensional isentropic flows with generic data and vacuum, the main breakthrough is due to Lions \cite{lions}, where he established the global existence of weak solutions provided that $\gamma >\frac95$. This result is improved to the case $\gamma>\frac32$ by Feireisl-Novotn\'{y}-Petzeltov\'{a}\cite{fu1}, and even to
 non-isentropic flows in \cite{fu2, fu3}. However, the regularities of these weak solutions are fairly low, and the
uniqueness problem is widely open. On the other hand, the local well-posedness problem in higher
regularity class with possible vacuum initial data requires comprehensive treatment on the degeneracy in time evolution in momentum equations $(\ref{eq:1.1})_2$. Since the leading coefficient of  $u_{t}$ is $\rho$ which vanishes at vacuum, there are infinitely many ways to define velocity (if it exists) when
vacuum appears.  Mathematically, this degeneracy brings forth an essential difficulty in determining the velocity when
vacuum occurs, since it is difficult to find a reasonable way to extend the definition of velocity into vacuum region.  Physically, it is not clear how to define the fluid velocity when there is no fluid at vacuum. A reasonable framework was proposed by  Cho-Choe-Kim  \cite{CK3, CK, guahu} through a  {\it compatibility condition}
\begin{equation}\label{constant}
-\text{div} \mathbb{T}_{0}+\nabla P(\rho_0)=\sqrt{\rho_{0}} g
\end{equation}
for some $g\in L^2$. In turn, a local theory of strong or classical solutions with initial vacuum  was established successfully for three-dimensional case; see also Duan-Zheng-Luo \cite{yuxi} and  Luo \cite{luoluo} for two-dimensional case. More recently, Huang-Li-Xin \cite{HX1} extended this local classical solution (\cite{guahu}) to  a global one with small energy and vacuum for isentropic flow in $\mathbb{R}^3$.  In these results, the uniform ellipticity of viscosity  plays a key role in the a prior estimates of  higher order terms of velocity $u$.

When viscosity coefficients $\mu$ and $\lambda$ are both density-dependent,  system \eqref{eq:1.1} has received extensive attentions in recent years. Instead of the uniform elliptic structure, the viscosity degenerates at vacuum, which raises the difficulty of the problem to another level.  A remarkable
discovery of a new mathematical entropy function  was made by Bresch-Desjardins \cite{bd3} for
 $\lambda(\rho)$ and $\mu(\rho)$ satisfying the relation
\begin{equation}\label{bds}\lambda(\rho)=2(\mu'(\rho)\rho-\mu(\rho)).
\end{equation}
For example, in our problem, such an entropy structure (\ref{bds}) exists  when $\beta=2\alpha(\delta-1)>0$.
This new entropy offers a nice estimate
$$
\mu'(\rho)\frac{\nabla \rho}{\sqrt{\rho}}\in L^\infty ([0,T];L^2(\mathbb{R}^d))
$$
provided that $ \mu'(\rho_0)\frac{\nabla \rho_0}{\sqrt{\rho_0}}\in L^2(\mathbb{R}^d)$ for any $d\geq 1$.
This observation plays an important role in the development of the global existence of weak solutions with vacuum for system (\ref{eq:1.1}) and related models, see Bresh-Dejardins \cite{bd6, bd7}, Mellet-Vassuer \cite{vassu}, and Vassuer-Yu \cite{vy}, and some other interesting results, c.f. \cite{ bd2, bd,   hailiang, taiping, sz2, zyj, tyc2}.  However, we remark that, in spite of these significant achievements mentioned above,  the local well-posedness of classical solutions in multi-dimensions with vacuum  is still open due to the high degeneracy of this system near vacuum region.  Our existence result in this paper is a solid step toward this direction.

From (\ref{eq:1.2})-(\ref{eq:1.3}), we notice that
$$
\text{div}\mathbb{T}=-\rho^\delta Lu+\nabla \rho^\delta \cdot \mathbb{S}(u),
$$
where the so-called Lam\'e operator $ L$ and  the operator $\mathbb{S} $ are given by
\begin{equation}\label{str2}
Lu=-\alpha\triangle u-(\alpha+\beta)\nabla \mathtt{div}u,\quad  \mathbb{S}(u)=\alpha(\nabla u+(\nabla u)^\top)+\beta\, \mathtt{div}u\, \mathbb{I}_3.
\end{equation}
When constructing the classical solutions  with arbitrarily large data and vacuum,   in addition to the issue shown above for the constant viscosity case on the degeneracy in time evolution in momentum equations,  there are still   two new significant  difficulties due to  the  appearance of vacuum.
The first one lies in the strong degeneracy of the elliptic   operator $\text{div}\mathbb{T}$ caused by vacuum. We note that under assumptions \eqref{fandan}-\eqref{100001},  viscosity coefficients vanish as density function connects to vacuum continuously in some open sets.  This degeneracy gives rise to some difficulties in our analysis because of the less regularizing effect of the viscosity on the solutions, and is one of the major obstacles preventing us from utilizing a similar remedy proposed by Cho et. al. \cite{CK3, CK, guahu} for the case of constant viscosity coefficients.
The second one lies in   the extra nonlinearity for the variable coefficients in $\text{div}\mathbb{T}$ due to (\ref{fandan}). We emphasize here that, unlike the constant viscosity  case, the elliptic part  $\text{div} \mathbb{T}$ is not always a  good term in the  regularity analysis for the higher order terms of the velocity. For example, if we want to get the estimate on $\|\nabla^3 u\|_{L^\infty([0,T];L^2(\mathbb{R}^3))}$ independent of the lower bound of the initial density, we need to deal with  an extra nonlinear term
$$\text{div}\big( \partial^\zeta_x\rho^\delta \big(\alpha(\nabla u+(\nabla u)^\top)+\beta\mathtt{div}u\mathbb{I}_3\big) \big),$$
where $\zeta=(\zeta_1,\zeta_2,\zeta_3)$ is a multi-index with $|\zeta|=\zeta_1+\zeta_2+\zeta_3=3$.
Therefore,  much attention will be  paid to control this strong nonlinearity.

In order to overcome these difficulties,  some  new ideas have to be introduced.
In \cite{sz3}, we obtained   the existence of the unique local classical solutions for system (\ref{eq:1.1})  with  vacuum far field under the assumption
\begin{equation}\label{case1}\delta=1, \quad \alpha>0,\quad \alpha+\beta\geq 0
\end{equation}
for two-dimensional space,  aiming at the application to shallow water models.
In \cite{sz3}
we observed that, assuming $\rho>0$, the momentum equations can be rewritten as
\begin{equation}\label{str1}
u_t+u\cdot\nabla u +\frac{2A\gamma}{\gamma-1}\rho^{\frac{\gamma-1}{2}}\nabla\rho^{\frac{\gamma-1}{2}}+Lu=\Big(\frac{\nabla \rho}{\rho}\Big) \cdot \mathbb{S}(u).
\end{equation}
Then, (\ref{str1}) implies that if we can control the special  source term  $\Big(\frac{\nabla \rho}{\rho}\Big) \cdot  \mathbb{S}(u)$ when vacuum appears,  the velocity $u$ of  the fluid  can be governed by a strong parabolic system. However, this result only allows vacuum at the far field. The corresponding problem with vacuum appearing in some open sets, or even at a single point is still unsolved.

Motivated by this observation in \cite{sz3}, when $\rho> 0$,  for our case under the assumption (\ref{100001}), instead of (\ref{str1}), $(\ref{eq:1.1})_2$ can be rewritten as
\begin{equation}\label{jgh}
\begin{split}
&u_t+u\cdot\nabla u +\rho^{\delta-1}Lu\\
=&-\frac{2A\gamma}{\delta-1}(\rho^{\frac{\delta-1}{2}})^{\frac{2\gamma-\delta-1}{\delta-1}}\nabla \rho^{\frac{\delta-1}{2}}+\frac{2\delta}{\delta-1}\rho^{\frac{\delta-1}{2}}\nabla \rho^{\frac{\delta-1}{2}} \cdot \mathbb{S}(u).
\end{split}
\end{equation}
Then if we pass to the limit as   $\rho\rightarrow 0$ on both sides of (\ref{jgh}), we formally have
\begin{equation}\label{zhenkong}
 u_t+ u\cdot \nabla u=0 \quad \text{when}\ \rho=0.
\end{equation}
So (\ref{jgh})-(\ref{zhenkong}) imply that actually the velocity $u$ can be governed by a nonlinear degenerate  parabolic system  when vacuum appears in some open sets or at the far field, which is essentially different from the parabolic system in (\ref{str1}) in the sense of mathematical structure.
Based on this observation, we introduce a proper class of solutions and  prove the local-in-time well-posedness of solutions with arbitrarily large data and vacuum in this class for  system (\ref{eq:1.1}) using
a new approach which bridges the parabolic system (\ref{jgh}) when $\rho>0$, and the hyperbolic system (\ref{zhenkong}) when $\rho=0$.

In order to present our results clearly,  we first introduce the following definition of regular solutions to (\ref{eq:1.1})-(\ref{eq:2.211}).
\begin{definition}[\text{\textbf{Regular solutions}}]\label{d1}\ Let $T> 0$ be a finite constant.  $(\rho,u)(x,t)$ is called a regular solution  in $ \mathbb{R}^3\times[0, T]$ to the  Cauchy problem (\ref{eq:1.1})-(\ref{eq:2.211}) if
\begin{equation*}\begin{split}
&(\textrm{A})\quad  (\rho,u) \ \text{satisfies the Cauchy problem (\ref{eq:1.1})-(\ref{eq:2.211}) in the sense of distribution};\\
&(\textrm{B})\quad  \rho\geq 0, \   \rho^{\frac{\delta-1}{2}}-\overline{\rho}^{\frac{\delta-1}{2}} \in C([0,T]; H^3), \ (\rho^{\frac{\delta-1}{2}})_t \in C([0,T]; H^2); \\
&(\textrm{C})\quad u\in C([0,T]; H^{s'}) \cap L^\infty([0,T]; H^3),\  \rho^{\frac{\delta-1}{2}} \nabla^4 u \in L^2([0,T] ; L^2), \\
&\qquad \  u_t \in C([0,T]; H^1)\cap L^2([0,T] ; D^2);\\
&(\textrm{D})\quad u_t+u\cdot\nabla u =0\quad  \text{holds when }\ \rho(t,x)=0,
\end{split}
\end{equation*}
for any constant  $s' \in[2,3)$.
\end{definition}
Here and throughout this paper, we adopt the following simplified notations, most of them are for the standard homogeneous and inhomogeneous Sobolev spaces:
\begin{equation*}\begin{split}
& |f|_p=\|f\|_{L^p(\mathbb{R}^3)},\quad \|f\|_s=\|f\|_{H^s(\mathbb{R}^3)},\quad  |f|_2=\|f\|_0=\|f\|_{L^2(\mathbb{R}^3)}, \\[8pt]
&D^{k,r}=\{f\in L^1_{loc}(\mathbb{R}^3):  |\nabla^kf|_{r}<+\infty\},\quad D^k=D^{k,2},\quad |f|_{D^{k,r}}=\|f\|_{D^{k,r}(\mathbb{R}^3)}\ (k\geq 2),\\[8pt]
&  D^{1}=\{f\in L^6(\mathbb{R}^3):  |\nabla f|_{2}<\infty\},\quad|f|_{D^{1}}=\|f\|_{D^{1}(\mathbb{R}^3)}.
\end{split}
\end{equation*}
 A detailed study of homogeneous Sobolev spaces  can be found in \cite{gandi}.
\begin{remark}\label{mark2}
This notion of regular solution for compressible Navier-Stokes equations  was first introduced by  Yang-Zhu \cite{tyc2} for one-dimensional case. We defined the regular solutions in \cite{sz3} for the case $\delta=1$.  Compared with \cite{sz3}, a significant difference is that the admissible initial data in this paper are much broader.  Actually in \cite{sz3}, in order to make sure that  the source term $ \Big(\frac{\nabla \rho}{\rho}\Big) \cdot  \mathbb{S}(u) $ appearing in the equations (\ref{str1}) is well defined in $H^2$ space, we need 
$$
\rho^{\frac{\gamma-1}{2}} \in H^3(\mathbb{R}^2),\quad \frac{\nabla \rho}{\rho} \in L^6 \cap D^1 \cap D^2(\mathbb{R}^2),
$$
which means that the vacuum must and only appear in the far field.
In this paper, we only need
$$
\rho^{\frac{\delta-1}{2}}-\overline{\rho}^{\frac{\delta-1}{2}} \in H^3(\mathbb{R}^3).
$$
So the vacuum can appear in any open set or in the far field.\end{remark}

Now we give the main existence result of this paper.
\begin{theorem}\label{th2}
If the initial data $( \rho_0, u_0)$ satisfy the following regularity conditions:
\begin{equation}\label{th78}
\begin{split}
\rho_0\geq 0,\quad  \left( \rho^{\frac{\delta-1}{2}}_0-\overline{\rho}^{\frac{\delta-1}{2}}, u_0\right)\in H^3(\mathbb{R}^3),
\end{split}
\end{equation}
then there exists a positive  time $T_*$ and a unique regular solution $(\rho, u)(x,t)$ in $\mathbb{R}^3\times[0, T_*]$ to  Cauchy problem (\ref{eq:1.1})-(\ref{eq:2.211}).
\end{theorem}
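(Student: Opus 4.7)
The plan is to recast the system using $\phi := \rho^{(\delta-1)/2}$ as the principal density unknown, which matches the regularity class in the hypotheses. The continuity equation becomes the transport-with-source equation
$$\phi_t + u \cdot \nabla \phi + \tfrac{\delta-1}{2}\phi\, \text{div}\, u = 0,$$
and the momentum equation becomes the nonlinear parabolic system (\ref{jgh}). The key structural point is that the constraint $1<\delta\leq(\gamma+1)/2$ in (\ref{100001}) ensures that every explicit power of $\rho$ appearing in (\ref{jgh}) is a \emph{non-negative} power of $\phi$; consequently the right-hand side of (\ref{jgh}) remains well-defined and continuous up to $\{\phi=0\}$, even though the leading coefficient $\phi^2$ of the Lam\'e operator degenerates there. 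On $\{\phi=0\}$ the system reduces to the first-order identity (\ref{zhenkong}), which is what condition (D) of Definition \ref{d1} encodes.

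I would construct approximate solutions by a two-level scheme. At the outer level, a Picard iteration $(\phi^k,u^k)\mapsto(\phi^{k+1},u^{k+1})$: given $u^k$, solve the linear transport equation above for $\phi^{k+1}$; then, with $\phi^{k+1}$ known, solve for $u^{k+1}$ from the linear parabolic system obtained by freezing the nonlinear coefficients of (\ref{jgh}) at $(\phi^{k+1},u^k)$. At the inner level, to accommodate the degeneracy of $(\phi^{k+1})^2$, replace it by $(\phi^{k+1})^2+\eta$ so that standard theory gives a smooth solution of the uniformly parabolic linear problem, and then pass to $\eta\to 0$ using bounds that never invoke a lower bound on the density.

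The heart of the argument is a uniform a priori estimate, in both $\eta$ and the iteration index $k$, for a functional of the form
$$\mathcal{E}(t) := \|\phi-\overline{\rho}^{\frac{\delta-1}{2}}\|_3^2 + \|u\|_3^2 + \|u_t\|_1^2 + \int_0^t\bigl(|\phi\nabla^4 u|_2^2 + |u_t|_{D^2}^2\bigr)\,ds.$$
Transport estimates control $\phi$ in $H^3$ once $u$ is Lipschitz. For $u$, applying $\nabla^\zeta$ with $|\zeta|\leq 3$ to (\ref{jgh}) and testing against $\nabla^\zeta u$ produces the degenerate coercive quantity $|\phi\nabla^4 u|_2^2$ together with commutator terms of the form $\nabla^{\zeta'}(\phi^2)\cdot Lu$; these are the only places where $\nabla^4 u$ appears without an obvious $\phi$ weight, and they are handled by using one copy of $\phi$ from $\phi^2$ as a weight on $\nabla^4 u$ and using the $H^3$-regularity of the other copy via Moser-type estimates. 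The source terms on the right of (\ref{jgh}) carry positive powers of $\phi$ and are dominated similarly. The $H^1$-bound for $u_t$ is obtained by differentiating (\ref{jgh}) in $t$ and using the equation itself to rewrite $(\phi^2)_t\,Lu$ in lower-order terms. Gronwall then closes these bounds on a uniform short interval $[0,T_*]$ depending only on $\|\phi_0-\overline{\rho}^{\frac{\delta-1}{2}}\|_3$ and $\|u_0\|_3$.

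Letting $\eta\to 0$ and $k\to\infty$ (strong convergence of the Picard iterates follows from a contraction estimate on the difference of two consecutive iterates in a lower-order norm such as $L^\infty_t L^2_x$) and setting $\rho:=\phi^{2/(\delta-1)}$ yields the candidate solution. The continuity equation holds automatically because $\phi$ satisfies its transport equation; the original momentum equation (\ref{eq:1.1})$_2$ holds distributionally after multiplying (\ref{jgh}) by $\rho$; condition (D) follows by passing to the pointwise limit $\phi\to 0$ in (\ref{jgh}), using once more that the source terms carry non-negative powers of $\phi$. Uniqueness is obtained by an $L^2$-energy estimate for the difference of two regular solutions, where the problematic term $(\phi_1^2-\phi_2^2)Lu_2$ is tamed by the weighted bound $\phi_i\nabla^2 u_i\in L^\infty_t L^2_x$ coming from $\mathcal{E}$. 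The principal obstacle throughout is the third step: closing the higher-order energy estimates for $u$ without any pointwise lower bound on $\phi$, which forces an organization of the estimates in which every uncontrolled occurrence of $\nabla^4 u$ is paired with a weight of $\phi$ extracted from the degenerate viscous coefficient.
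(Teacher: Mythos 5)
Your overall strategy coincides with the paper's: rewrite the system in terms of $\phi=\rho^{(\delta-1)/2}$, regularize the degenerate viscous coefficient by an artificial $\eta$, prove $\eta$-independent $H^3$-type estimates in which every occurrence of $\nabla^4u$ is paired with a weight $\phi$ extracted from the viscous coefficient, pass to the limit $\eta\to0$, run a Picard iteration with contraction in a low norm such as $L^\infty_tL^2_x$, and finally return to $(\rho,u)$. The energy functional you propose is essentially the one the paper closes, and your treatment of condition (D), of uniqueness, and of the recovery of the original system matches the paper's Sections 3.5--3.6.

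There is, however, one concrete flaw in the iteration scheme as you state it. You propose to solve $\phi^{k+1}_t+u^k\cdot\nabla\phi^{k+1}+\frac{\delta-1}{2}\phi^{k+1}\,\text{div}\,u^k=0$. To propagate the $H^3$ bound on $\phi^{k+1}$ you must control $|\nabla^3(\phi^{k+1}\,\text{div}\,u^k)|_2$, hence $|\phi^{k+1}\nabla^4u^k|_2$; but the only fourth-order information available from the previous step is the weighted bound $\phi^{k}\nabla^4u^{k}\in L^2_tL^2_x$ (the weight being the coefficient of the Lam\'e operator in the equation solved by $u^k$), and there is no unweighted control of $\nabla^4u^k$. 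Since $\phi^{k+1}$ may well be strictly positive where $\phi^k$ vanishes, the term $\phi^{k+1}\nabla^4 u^k$ is not controlled and the $H^3$ estimate for $\phi^{k+1}$ does not close; integrating by parts does not help because it produces $\nabla^4\phi^{k+1}$. The paper avoids exactly this by linearizing the source term with the \emph{previous} iterate, i.e. taking $\frac{\delta-1}{2}\psi\,\text{div}\,v$ with $(\psi,v)=(\phi^k,u^k)$ in $(\ref{li4})_1$, so that the critical term is precisely the known quantity $\phi^k\nabla^4u^k$; this compatibility between the linearized continuity equation and the weighted viscous estimate is singled out in Section 3.2 as the key design choice, and it is the one nontrivial point your plan misses. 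With that correction, your outline follows the paper's proof.
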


\begin{remark}\label{re3}
The weak smoothing effect of the velocity $u$ in positive time $t\in [\tau,T_*]$, $\forall \tau \in (0,T_*)$, tells us that the regular  solution obtained in Theorem \ref{th2} is indeed a classical one in $(0,T_*]\times \mathbb{R}^3$. The details can be found in the Appendix.
\end{remark}

\begin{remark}\label{hg} We can also consider the flow with external force  in the momentum equations $(\ref{eq:1.1})_2$ such as
\begin{equation}\label{jia}
(\rho u)_t+\text{div}(\rho u\otimes u)
  +\nabla
   P =\text{div} \mathbb{T}+\rho h.
\end{equation}
 Assume that
\begin{equation}\label{jia1}
h\in C([0,T];H^1(\mathbb{R}^3))\cap L^2([0,T];H^3(\mathbb{R}^3))
\end{equation}
for some $T>0$, then  the same conclusion as in Theorem \ref{th2} still holds if we  replace the condition $(C)$ in Definition \ref{d1} with
$$
 u_t+ u\cdot \nabla u=h \quad \text{when}\ \rho=0.
$$
To achieve this result, one only needs to make minor modifications on our proof for Theorem 1.1. We will
not go into details on this matter.
\end{remark}

As a direct consequence of Theorem \ref{th2} and by the standard theory of quasi-linear hyperbolic equations, we have the following additional regularities for some specially choices of power $\delta$.
\begin{corollary}\label{co2} Let
$1< \delta \leq \min \Big\{\frac{5}{3}, \frac{\gamma+1}{2}\Big\},\ \text{or} \ \delta=2 \ (\gamma\geq 3).$ If  the initial data $( \rho_0, u_0)$ satisfy (\ref{th78}),
then the regular solution $( \rho, u)$ obtained in Theorem \ref{th2} also satisfies
\begin{equation}\label{regco}\begin{split}
& \rho-\overline{\rho}\in C([0,T_*);H^3),\quad  \rho_t \in C([0,T_*);H^2).
\end{split}
\end{equation}
\end{corollary}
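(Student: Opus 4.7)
The key idea is to transfer the established regularity of $\varphi := \rho^{(\delta-1)/2}$ to that of $\rho$ itself via the relation $\rho = \varphi^{m}$ with $m := 2/(\delta-1)$, and then to propagate through the continuity equation by standard linear hyperbolic theory. The restriction on $\delta$ in the corollary is tailored precisely so that either $m = 2$ (when $\delta = 2$) or $m \geq 3$ (when $1 < \delta \leq 5/3$); in both ranges the map $y \mapsto y^{m}$ is $C^3$ with bounded derivatives up to order three on any bounded subinterval of $[0,\infty)$, which is the crucial feature given that $\varphi$ may vanish on open sets.

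The first step is to establish the initial regularity $\rho_0 - \overline{\rho} \in H^3$ from the hypothesis $\varphi_0 - \overline{\varphi} \in H^3$, where $\overline{\varphi} := \overline{\rho}^{(\delta-1)/2}$. In the case $\delta = 2$ one writes $\rho_0 - \overline{\rho} = (\varphi_0 - \overline{\varphi})(\varphi_0 + \overline{\varphi})$, a product of an $H^3$ function with an $L^\infty$-bounded $H^3$-perturbation of a constant, so the algebra property of $H^3(\mathbb{R}^3)$ delivers the conclusion. In the case $1 < \delta \leq 5/3$ one expands $\nabla^k(\varphi_0^{m} - \overline{\varphi}^{m})$ for $k = 1, 2, 3$ via the chain rule; since $m - 1, m - 2, m - 3 \geq 0$, each coefficient $\varphi_0^{m-j}$ is bounded on the range of $\varphi_0$, so no inverse power of $\varphi_0$ ever appears, and the remaining products of derivatives are controlled in $L^2$ via the Sobolev embedding $H^2(\mathbb{R}^3) \hookrightarrow L^\infty$ together with Moser-type estimates; the $L^2$ estimate of $\rho_0 - \overline{\rho}$ itself follows from the elementary pointwise bound $|\varphi_0^{m} - \overline{\varphi}^{m}| \leq C|\varphi_0 - \overline{\varphi}|$.

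With $\rho_0 - \overline{\rho} \in H^3$ in hand, rewrite the continuity equation as
\begin{equation*}
(\rho - \overline{\rho})_t + u \cdot \nabla(\rho - \overline{\rho}) + (\rho - \overline{\rho})\, \text{div}\, u = -\overline{\rho}\, \text{div}\, u,
\end{equation*}
and view this as a linear transport equation for $\rho - \overline{\rho}$ driven by the known coefficient $u \in L^\infty([0,T_*]; H^3) \cap C([0,T_*]; H^{s'})$ coming from the regular solution of Theorem \ref{th2}. Standard $H^3$ energy estimates for first-order linear symmetric hyperbolic systems then propagate the initial regularity, yielding $\rho - \overline{\rho} \in C([0,T_*); H^3)$. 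The conclusion $\rho_t \in C([0,T_*); H^2)$ follows by reading $\rho_t = -u \cdot \nabla \rho - \rho\, \text{div}\, u$ directly off the continuity equation and invoking the $H^2$ algebra property in $\mathbb{R}^3$ for the resulting products.

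The main obstacle is the composition argument of the first step for non-integer exponents $m$, which covers most of the range $1 < \delta < 5/3$: one must rule out blow-up near the vacuum set $\{\varphi_0 = 0\}$. The condition $m \geq 3$ is sharp for this purpose, since the third derivative of $y \mapsto y^{m}$ is proportional to $y^{m-3}$ and remains bounded at $y = 0$ exactly when $m \geq 3$; this is why the intermediate range $\delta \in (5/3, 2)$ is excluded from the statement, and the special value $\delta = 2$ must be treated separately as the polynomial case $m = 2$.
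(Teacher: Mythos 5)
Your central idea --- that the constraint on $\delta$ makes $m=2/(\delta-1)$ either equal to $2$ or at least $3$, so that $y\mapsto y^{m}$ is $C^3$ with bounded derivatives down to the vacuum value $y=0$ and the composition $\rho=\phi^{m}$ loses no regularity --- is exactly the mechanism the paper uses. The difference is that the paper applies this composition at \emph{every} time $t$, reading $\rho-\overline{\rho}\in C([0,T_*];H^3)$ directly off $\phi-\overline{\phi}\in C([0,T_*];H^3)$, and uses the continuity equation only to obtain the regularity of $\rho_t$; you compose only at $t=0$ and then propagate by an $H^3$ energy estimate for the transport equation. As written, that propagation step has a genuine gap: applying $\partial^\zeta_x$ with $|\zeta|=3$ to the term $\rho\,\text{div}\,u$ produces $\rho\nabla^4 u$, and $\nabla^4 u$ is \emph{not} known to be square integrable --- the solution of Theorem \ref{th2} only carries the weighted bound $\phi\nabla^4 u\in L^2([0,T_*];L^2)$ from Definition \ref{d1}(C), while $u$ itself is merely in $L^\infty([0,T_*];H^3)$. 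So ``standard linear hyperbolic theory'' does not apply off the shelf; you would need to write $\rho\nabla^4 u=\phi^{m-1}\bigl(\phi\nabla^4 u\bigr)$ and use $m-1\geq 1$ together with $\phi\in L^\infty$ to close the estimate (and keep the two pieces $(\rho-\overline{\rho})\,\text{div}\,u$ and $\overline{\rho}\,\text{div}\,u$ recombined, since the unweighted source $\overline{\rho}\nabla^4u$ alone is not controlled when $\overline{\rho}>0$). This is salvageable, but it is precisely the point where the direct composition argument is both shorter and safer.

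The claim $\rho_t\in C([0,T_*);H^2)$ ``by the $H^2$ algebra property'' has a similar gap. Definition \ref{d1} only gives $u\in C([0,T_*];H^{s'})$ for $s'<3$, so $\text{div}\,u$ is continuous in time with values in $H^{s'-1}$ with $s'-1<2$, and the product $\rho\,\text{div}\,u$ is therefore not obviously continuous with values in $H^2$. The paper closes this by first showing $\rho\,\text{div}\, u\in L^2([0,T_*];H^3)$ (again via the weighted bound on $\phi\nabla^4 u$) together with $(\rho\,\text{div}\, u)_t\in L^2([0,T_*];H^1)$, and then invoking the embedding into $C([0,T_*];H^2)$, exactly as in Step 3 of the proof of Lemma \ref{lem1q}. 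Your first step --- the composition at $t=0$, including the separate polynomial treatment of $\delta=2$ and the observation that $m\geq 3$ keeps $y^{m-3}$ bounded near $y=0$ --- is correct and coincides with the paper's key point.
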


Compared with previous results for $\delta=0$ \cite{guahu} and $\delta=1$ \cite{sz3}, our analysis in the proof of Theorem  \ref{th2}  is quite different.  From the observations shown in  (\ref{jgh})-(\ref{zhenkong}), the behavior of the velocity $u$ near vacuum under the assumption (\ref{100001}) is more hyperbolic than the cases in  \cite{guahu} and  \cite{sz3}.
This phenomenon leads us to developing a different approach in the analysis on the regularity  of $u$. As a matter of fact, in  \cite{CK3, CK, guahu, sz3}, the uniform ellipticity of the Lam\'e operator $ L$ defined in (\ref{str2}) plays an essential role. Therefore, one can use standard elliptic theory to estimate $|u|_{D^{k+2}}$ ($0\leq k\leq 2$) by the $D^k$-norm of all other terms in momentum equations $(\ref{eq:1.1})_2$ for the case $\delta=0$ in  \cite{CK3, CK, guahu}, or in (\ref{str1}) for the case $\delta=1$ in \cite{sz3, sz33}. However,  for our case, it is clear from (\ref{jgh}) that the standard parabolic theory only offers the
  following weighted estimate:
$$
\rho^{\frac{\delta-1}{2}} \partial^\zeta_x u\in L^2([0,T];L^2),\quad \text{for} \quad |\zeta|=1,2,3,4.
$$
This estimate alone is not enough for the  regularity analysis on  $u$ and $\rho$.
In order to obtain the desired estimate
$$
u \in L^\infty([0,T],H^3)\cap C([0,T]; H^{s'})\quad \text{for\  any}\  s'\in [2,3)
$$
shown in Definition \ref{d1}, we turn to the help of symmetric hyperbolic structure
$$
u_t+u\cdot \nabla u.
$$
These estimates are accomplished along with a vanishing viscosity limit argument.\\

Another challenging question in the theory of fluid dynamics is the problem of global regularity. This problem is extremely difficult for three-dimensional compressible Navier-Stokes equations, especially when the initial density contains vacuum. When viscosity coefficients are constants, global regularity was achieved when initial energy is small, see \cite{Hoff} for the case away from vacuum, and \cite{HX1} for the case allowing possible initial vacuum for the solutions in the class of
\cite{CK3}. In particular, the results of \cite{HX1} showed that no singularity will form in finite time for smooth solutions in the class of \cite{CK3} if the initial energy is small. On the other hand, there are a lot of finite time blowup results obtained, say \cite{y1, luoluo, olg1, zx, zwy, tyc2}, for various solution classes or conditions. However, it is not clear yet if such solutions exist locally in time.  For our case, when the viscosity coefficients satisfy the conditions (\ref{fandan})-(\ref{100001}), we will show that  the  solution  we obtained will break down in finite time for certain classes of initial data with
local vacuum, no matter how small the initial energy is. This is in sharp contrast to the case of constant viscosity as shown in  \cite{CK3, guahu, HX1}.  This is because our problem behaves more closely to compressible Euler equations in vacuum region, due to   strong degeneracy of the viscosity.

In fact, we will present two scenarios for singularity development in finite time from local vacuum initial data.  The first one is motivated by Xin-Yan \cite{zwy}, called the initial data with isolated mass group.

\begin{definition}[\textbf{Isolated mass group}]\label{local}  $(\rho_0(x),u_0(x))$ is said to have an isolated mass group $(A_0,B_0)$, if there are two smooth,  bounded and connected open sets  $A_0 \subset \mathbb{R}^3$ and $B_0\subset \mathbb{R}^3$ satisfying
\begin{equation} \label{eq:12131}
\begin{cases}
\displaystyle
\overline{A}_0 \subset B_0 \subseteq  B_{R_0} \subset \mathbb{R}^3;         \\[10pt]
\displaystyle
\rho_0(x)=0,\ \forall \  x\in B_0 \setminus A_0, \quad \int_{A_0} \rho_0(x) \text{d}x>0; \\[10pt]
\displaystyle
 u_0(x)|_{\partial A_0}=\overline{u}_0
\end{cases}
\end{equation}
for some positive constant $R_0$ and  constant vector $\overline{u}_0 \in \mathbb{R}^3$, where $B_{R_0}$ is  the ball centered at the origin with radius $R_0$.
\end{definition}

Our first blowup result shows that an isolated mass group in initial data guarantees the finite time singularity formation of regular solutions.
\begin{theorem}[\textbf{Blow-up by isolated mass group}]\label{coo2}\
 If the initial data $(\rho_0,u_0)(x)$ have an isolated mass group $(A_0,B_0)$,
then the  regular solution $(\rho, u)(x,t)$ in $\mathbb{R}^3\times[0, T_m]$ obtained in Theorem \ref{th2} with maximal existence time $T_m$ blows up in finite time, i.e.,
$\
T_m<+\infty.
$
\end{theorem}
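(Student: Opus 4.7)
The plan is to assume for contradiction that $T_m = \infty$ and derive contradictory bounds for a weighted first moment on a ``frozen'' mass region. First, up to replacing $(u,x)$ by $(u-\overline{u}_0,\,x-t\overline{u}_0)$, which preserves the system's structure by Galilean invariance, I may assume $\overline{u}_0 = 0$. Since $u \in L^\infty([0,T];H^3) \hookrightarrow L^\infty(C^{1,1/2})$ for every $T>0$, the characteristic ODE $\partial_t \eta = u(t,\eta)$, $\eta(0,\cdot) = \mathrm{id}$, generates a bi-Lipschitz flow. The continuity equation propagates the vanishing of $\rho_0$ on $B_0 \setminus \overline{A_0}$ to $\rho(t,\eta(t,x)) = 0$ for all $t$, and condition (D) of Definition \ref{d1} then forces $u_t + u\cdot\nabla u = 0$ along these characteristics; taking $x \in \partial A_0$, where $u_0(x) = 0$, the corresponding characteristics are stationary. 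Hence $A(t) := \eta(t,A_0) = A_0$ for all $t$, and the trapped mass is conserved: $\int_{A_0}\rho(t,x)\,dx = M_0 > 0$.

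The second step is a first-moment identity for $m(t) := \int_{A_0} \rho\, u\cdot x\,dx$. Taking the inner product of $(\ref{eq:1.1})_2$ with $x$ and integrating over the fixed region $A_0$, all boundary contributions drop, since the continuity of $\rho$ (via the regularity $\rho^{(\delta-1)/2} \in C_tH^3$) gives $\rho|_{\partial A_0} = 0$, so that $P(\rho) = A\rho^\gamma = 0$ and $\mathbb{T} = \alpha\rho^\delta(\nabla u + (\nabla u)^\top) + \beta\rho^\delta\,\text{div}\,u\,\mathbb{I}_3 = 0$ on $\partial A_0$. Routine integration by parts yields
\begin{equation*}
\frac{dm}{dt} \;=\; \int_{A_0}\rho|u|^2\,dx \;+\; 3A\int_{A_0}\rho^\gamma\,dx \;-\; (2\alpha+3\beta)\int_{A_0}\rho^\delta\,\text{div}\,u\,dx.
\end{equation*}
The last integral is reformulated via the identity $(\rho^\delta)_t + \text{div}(\rho^\delta u) = (1-\delta)\rho^\delta\,\text{div}\,u$, a direct consequence of the continuity equation, which integrates to $\int_{A_0}\rho^\delta\,\text{div}\,u\,dx = \tfrac{1}{1-\delta}\tfrac{d}{dt}\int_{A_0}\rho^\delta\,dx$. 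Defining
\begin{equation*}
G(t) \;:=\; m(t) \;-\; \frac{2\alpha+3\beta}{\delta-1}\int_{A_0}\rho^\delta(t,x)\,dx,
\end{equation*}
one obtains $G'(t) = \int_{A_0}\rho|u|^2\,dx + 3A\int_{A_0}\rho^\gamma\,dx \geq 3A\,M_0^\gamma|A_0|^{1-\gamma} =: c_1 > 0$ by Jensen's inequality applied to mass conservation.

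On the other hand, $G$ is uniformly bounded. Since $A_0 \subset B_{R_0}$ and the basic energy inequality bounds $\int_{A_0}\rho|u|^2\,dx \leq 2E(0)$ uniformly in time, Cauchy-Schwarz gives $|m(t)| \leq R_0\sqrt{2M_0 E(0)}$; the constraint $\delta < \gamma$ (forced by $1 < \delta \leq (\gamma+1)/2$) together with H\"older and the energy bound $\int\rho^\gamma \leq \tfrac{\gamma-1}{A}E(0)$ yields $\int_{A_0}\rho^\delta\,dx \leq C(E(0),|A_0|)$. Hence $|G(t)| \leq C$ uniformly in $t \geq 0$, contradicting $G(t) \geq G(0) + c_1 t \to \infty$, so $T_m < \infty$. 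The main technical hurdle is the rigorous justification of the vanishing of $\rho$, $P$, and $\mathbb{T}$ on $\partial A_0$ at the level of regular solutions, together with the continuous extension of the transport identity $u(t,\eta(t,x)) = u_0(x)$ from the vacuum interior up to $\partial A_0$; both rely on the $C([0,T];H^3)$ regularity of $\rho^{(\delta-1)/2}$ and $u$, combined with the smoothness of $\partial A_0$ and a standard approximation argument.
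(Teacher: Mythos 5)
Your proof is correct, but it follows a genuinely different route from the paper's. The paper adapts Xin's second-moment method: it works on the moving region $A(t)$ (shown to be a rigid translate of $A_0$), forms the weighted functional $I(t)=M(t)-2(t+1)F(t)+2(t+1)^2\varepsilon(t)$, derives a differential inequality whose solution grows at most like $t^{2-3(\gamma-1)}+t+1$ (with a separate case analysis for $1<\gamma<5/3$ and $\gamma\ge 5/3$, and a logarithmic borderline case), and contradicts the Jensen-type lower bound $I(t)\ge C_0(1+t)^2$. You instead freeze the region by a Galilean reduction and run a Sideris-type first-moment argument on $m(t)=\int_{A_0}\rho u\cdot x\,dx$; your key new ingredient is the renormalized identity $(\rho^\delta)_t+\mathrm{div}(\rho^\delta u)=(1-\delta)\rho^\delta\,\mathrm{div}\,u$, which converts the viscous contribution $-(2\alpha+3\beta)\int\rho^\delta\mathrm{div}\,u$ into an exact time derivative of $\int_{A_0}\rho^\delta$, so that $G(t)=m(t)-\tfrac{2\alpha+3\beta}{\delta-1}\int_{A_0}\rho^\delta$ satisfies $G'\ge 3A|A_0|^{1-\gamma}m(0)^\gamma>0$ while remaining uniformly bounded. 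This buys a single linear-growth-versus-boundedness contradiction with no case analysis in $\gamma$ and an explicit upper bound on $T_m$; the paper's version, by contrast, does not need to absorb the viscous term exactly and only uses the sign $2\alpha+3\beta\ge 0$ together with Cauchy's inequality. Both proofs share the same foundational steps: the characteristics on $\partial A_0$ are determined by condition (D) of Definition \ref{d1}, the tensor $\mathbb{T}$ and the pressure vanish on the vacuum boundary, and Jensen's inequality with mass conservation supplies the positive lower bound.

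Two points in your write-up should be tightened, though neither is a genuine gap. First, the ``basic energy inequality'' you invoke must be the \emph{local} energy inequality on the frozen region $A_0$ (obtained exactly as in the paper's computation $\mathrm{div}(u\,\mathbb{T})\ge u\cdot\mathrm{div}\,\mathbb{T}+(\beta+\tfrac23\alpha)\rho^\delta(\mathrm{div}\,u)^2$ together with the vanishing of all boundary fluxes on $\partial A_0$): the global energy need not be finite when $\overline{\rho}>0$, and after your Galilean boost the far-field velocity is $-\overline{u}_0$, so the global kinetic energy is not available. Second, the boosted pair is not literally a regular solution in the sense of Definition \ref{d1} when $\overline{u}_0\neq 0$ (it fails the $H^3$ far-field condition), so you should either note that every local identity you use is Galilean invariant, or dispense with the boost and work on the moving region $A(t)=A_0+t\overline{u}_0$ with $m(t)=\int_{A(t)}\rho\,(u-\overline{u}_0)\cdot(x-t\overline{u}_0)\,dx$, which changes nothing in the computation.
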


For the second scenario, we explore the hyperbolic structure for the system in vacuum region. For this purpose, we  introduce the following concept.
\begin{definition}[\textbf{Hyperbolic singularity set}]\label{bugers}
The non-empty open set $V \subset \mathbb{R}^3$ is  called a hyperbolic singularity set of $(\rho_0,u_0)(x)$, if $V$ satisfies
\begin{equation} \label{eq:12131ss}
\begin{cases}
\displaystyle
\rho_0(x)=0, \ \forall \ x\in V;\\[10pt]
\displaystyle
 Sp(\nabla u_0) \cap \mathbb{R}^-\neq\  \emptyset,\quad  \forall  \ x \in V,
\end{cases}
\end{equation}
where $Sp(\nabla u_0(x))$ denotes the spectrum of the matrix $\nabla u_0(x)$.
\end{definition}
This definition is inspired by  the global existence theory of classical solutions to the compressible Euler equations in \cite{GS} and \cite{danni}, for initial data satisfying the following conditions
 \begin{itemize}
\item $\rho_0$ is small and  compactly supported,
\item$
Sp(\nabla u_0) \cap \mathbb{R}^-=\  \emptyset,\quad  \forall  \ x\in \mathbb{R}^3.
$
\end{itemize}

Our second blowup result confirms that hyperbolic singularity set does generate singularity from local regular solutions in finite time.
\begin{theorem}[\textbf{Blow-up by hyperbolic singularity set}]\label{coo22}\
 If the initial data $(\rho_0,u_0)(x)$ have a non-empty hyperbolic singularity set $V$, then
the  regular solution $(\rho, u)(x,t)$ on $\mathbb{R}^3\times[0, T_m]$ obtained in Theorem \ref{th2} with maximal existence time $T_m$ blows up in finite time, i.e.,
$
T_m<+\infty.
$
\end{theorem}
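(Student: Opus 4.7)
The strategy is to exploit condition (D) of Definition \ref{d1}: on the vacuum region the momentum equation reduces to the free transport identity $u_t+u\cdot\nabla u=0$, and differentiating this identity in $x$ produces a closed matrix Riccati ODE for $\nabla u$ along particle trajectories. A negative real eigenvalue of $\nabla u_0(x_0)$ at some $x_0\in V$ will then force $|\nabla u|_\infty$ to blow up in finite time, which is incompatible with the regularity $u\in L^\infty([0,T_m];H^3)$ granted by condition (C).

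Fix $x_0\in V$. By condition (C) together with Remark \ref{re3}, $u$ is classical on $(0,T_m)\times\mathbb{R}^3$ and belongs to $L^\infty([0,T_m];W^{1,\infty})$ via the Sobolev embedding $H^3\hookrightarrow W^{1,\infty}$, so the trajectory $X(t)$ defined by $\dot X=u(X,t)$, $X(0)=x_0$, is a well-defined $C^1$ curve on $[0,T_m)$. Integrating the continuity equation along $X$ gives $\rho(X(t),t)=\rho_0(x_0)\exp\bigl(-\int_0^t \dd u(X(s),s)\,\dif s\bigr)=0$. Since $V$ is open, the flow $X(t,\cdot)$ is a homeomorphism sending some neighborhood $U\subset V$ of $x_0$ to an open vacuum set around $X(t)$, and by condition (D) the identity $u_t+u\cdot\nabla u=0$ holds classically on an open spatial neighborhood of $X(t)$ for every $t\in[0,T_m)$.

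Differentiating this identity in $x_i$ on that open set and writing $M_{ij}(t):=\partial_i u^j(X(t),t)$, a direct computation via the material chain rule yields the matrix Riccati ODE
\[
\frac{d}{dt}M(t) = -M(t)^2, \qquad M(0)=\nabla u_0(x_0),
\]
whose unique solution is $M(t)=\nabla u_0(x_0)\bigl(I+t\,\nabla u_0(x_0)\bigr)^{-1}$. By hypothesis $\nabla u_0(x_0)$ admits a negative real eigenvalue $\lambda_0<0$, so $I+t\,\nabla u_0(x_0)$ first becomes singular at $t^*:=-1/\lambda_0 > 0$, forcing $|M(t)|\to+\infty$ as $t\nearrow t^*$. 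On the other hand, condition (C) and $H^2\hookrightarrow L^\infty$ give $|\nabla u|_\infty\leq C\|u\|_3$ uniformly on every $[0,T]\subset[0,T_m)$, so $T_m\leq t^* < +\infty$.

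The main technical point to be handled carefully will be the justification of the spatial differentiation of (D); this rests on $V$ being open, the flow of the Lipschitz field $u$ preserving openness, and the classical smoothness of $u$ for $t>0$ (Remark \ref{re3}), so that $\partial_i u^j$, $\partial_t\partial_i u^j$ and the quadratic source $\partial_i u^k\,\partial_k u^j$ are all pointwise classical quantities on the time-varying open vacuum set $X(t,U)$, on which the identity in (D) may be differentiated term by term. Once this is established the remaining argument is purely ODE-theoretic.
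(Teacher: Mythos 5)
Your proposal is correct and follows essentially the same route as the paper: both exploit the free-transport identity of condition (D) in the vacuum region to obtain $\nabla u(X(t),t)=\bigl(\mathbb{I}_3+t\nabla u_0(x_0)\bigr)^{-1}\nabla u_0(x_0)$, the paper by implicit differentiation of $u(x,t)=u_0(x-tu(x,t))$ and you by solving the equivalent Riccati ODE $\dot M=-M^2$. Your explicit contradiction with the $L^\infty([0,T_m];H^3)\hookrightarrow W^{1,\infty}$ bound is a slightly more careful phrasing of the paper's concluding step, but the argument is the same.
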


We now outline the organization of the rest of this paper. In Section $2$, we list some important lemmas that will be used frequently in our proof. In Section $3$,  we first reformulate our problem  into a simpler form in terms of some new variables, and then  we give the proof of the local existence of strong solutions to this reformulated problem, which is achieved in four steps: (1) we construct global approximate solutions for a
specially designed linearized problem with an artificial viscosity $\eta^2 Lu$ in  momentum equations; (2) we establish the a priori estimates independent of artificial viscosity coefficient $\eta$; (3) we then pass to the limit $\eta\rightarrow 0$ to recover the solution of this linearized problem allowing degeneracy in the elliptic operator appearing in momentum equations; (4) we prove the unique solvability of the reformulated problem through a standard iteration process. Section $4$ is devoted to proving the finite time blowup shown in Theorems \ref{coo2}-\ref{coo22}. Section 5 is the appendix where we prove the fact that the regular solution we obtained in Theorem \ref{th2} is indeed a classical one in $(0,T_{*}]$.

Finally, we remark that our framework  in this paper is applicable to other physical dimensions, say 1 and 2, with some minor modifications. This is clear from the analysis carried out in the following sections.

\section{Preliminaries}

In this section, we list some important lemmas  that will be frequently used in our proofs.
The first one is the  well-known Gagliardo-Nirenberg inequality.
\begin{lemma}\cite{oar}\label{lem2as}\
For $p\in [2,6]$, $q\in (1,+\infty)$, and $r\in (3,+\infty)$, there exists some generic constant $C> 0$ that may depend on $q$ and $r$ such that for
$$f\in H^1(\mathbb{R}^3)\quad \text{and} \quad  g\in L^q(\mathbb{R}^3)\cap D^{1,r}(\mathbb{R}^3),$$
 we have
\begin{equation}\label{33}
\begin{split}
&|f|^p_p \leq C |f|^{(6-p)/2}_2 |\nabla f|^{(3p-6)/2}_2,\\[8pt]
&|g|_\infty\leq C |g|^{q(r-3)/(3r+q(r-3))}_q |\nabla g|^{3r/(3r+q(r-3))}_r.
\end{split}
\end{equation}
\end{lemma}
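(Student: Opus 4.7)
These are two particular cases of the classical Gagliardo--Nirenberg interpolation inequalities in $\mathbb{R}^3$, and the exponents appearing on the right-hand sides are completely forced by the invariance of both sides under the scaling $f\mapsto f(\lambda\,\cdot)$. My plan is therefore first to fix the exponents by a scaling heuristic, then to obtain each inequality by combining a critical Sobolev embedding (to reach one endpoint) with H\"older log-convexity between the two endpoints.

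For the first inequality, given $p\in[2,6]$, I would write $\frac{1}{p}=\frac{1-\theta}{2}+\frac{\theta}{6}$, which forces $\theta=(3p-6)/(2p)$ and $1-\theta=(6-p)/(2p)$. Log-convexity of $L^p$-norms (H\"older with exponents $2/(1-\theta)$ and $6/\theta$) then gives $|f|_p\leq |f|_2^{1-\theta}|f|_6^{\theta}$. Next, the 3D critical Sobolev embedding $|f|_6\leq C|\nabla f|_2$ — which one can derive from the Gagliardo--Nirenberg--Sobolev argument starting from $\|f\|_{L^{3/2}}\leq C\prod_{i=1}^{3}\|\partial_i f\|_{L^1}^{1/3}$ and applied to $|f|^{\alpha}$ for a suitable $\alpha$ — turns this into $|f|_p\leq C|f|_2^{(6-p)/(2p)}|\nabla f|_2^{(3p-6)/(2p)}$. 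Raising to the $p$-th power yields the stated form.

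For the second inequality the exponents $a=q(r-3)/(3r+q(r-3))$ and $1-a=3r/(3r+q(r-3))$ are again those dictated by scaling (one checks $-3a/q+(1-a)(1-3/r)=0$ and $a+(1-a)=1$). Since $r>3$, functions in $D^{1,r}(\mathbb{R}^3)$ are locally H\"older continuous by Morrey's inequality, so a concrete route is to cover $\mathbb{R}^3$ by cubes $Q_j$ of side length $\lambda$ and apply on each cube the local estimate $\|g\|_{L^{\infty}(Q_j)}\leq C\bigl(\lambda^{-3/q}\|g\|_{L^{q}(Q_j)}+\lambda^{1-3/r}\|\nabla g\|_{L^{r}(Q_j)}\bigr)$, which follows from Morrey together with a Poincar\'e-type bound on $Q_j$ applied to $g$ minus its average. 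Taking the supremum over $j$ and then optimizing in $\lambda$ (the balance $\lambda^{-3/q}|g|_q=\lambda^{1-3/r}|\nabla g|_r$ gives $\lambda=(|g|_q/|\nabla g|_r)^{qr/(3r+q(r-3))}$) recovers the stated inequality with the precise exponents.

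The only delicate point is the second inequality, because one must reach the $L^\infty$ endpoint through the super-critical (hence H\"older) information carried by $|\nabla g|_r$ while still using the weakest admissible $L^q$ integrability — here $q$ is allowed to be arbitrarily close to $1$. The cube-by-cube Morrey estimate plus the optimization in $\lambda$ is exactly what handles this interaction and reproduces the scaling-critical exponents; if one prefers a one-line argument, one can simply invoke the general Gagliardo--Nirenberg theorem as stated in \cite{oar}, from which both displayed inequalities are immediate special cases with $(j,m)=(0,1)$ and the appropriate choice of interpolation parameter.
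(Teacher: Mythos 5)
Your proof is correct. The paper does not prove this lemma at all — it is simply quoted from the cited reference \cite{oar} — so there is no in-paper argument to compare against; your derivation (log-convexity of $L^p$ norms between $L^2$ and $L^6$ plus the critical Sobolev embedding $|f|_6\leq C|\nabla f|_2$ for the first estimate, and a cube-by-cube Morrey/Poincar\'e bound optimized in the side length $\lambda$ for the second) is the standard route, and your exponent bookkeeping, including the scaling check and the value of the optimal $\lambda$, is accurate.
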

Some commonly used versions of this inequality are
\begin{equation}\label{ine}\begin{split}
|u|_6\leq C|u|_{D^1},\quad |u|_{\infty}\leq C| u|^{\frac{1}{2}}_{6}| \nabla u|^{\frac{1}{2}}_{6}, \quad |u|_{\infty}\leq C\|u\|_{W^{1,r}}.
\end{split}
\end{equation}

The second lemma is some compactness results obtained via the Aubin-Lions Lemma.
\begin{lemma}\cite{jm}\label{aubin} Let $X_0\subset X\subset X_1$ be three Banach spaces.  Suppose that $X_0$ is compactly embedded in $X$ and that $X$ is continuously embedded in $X_1$. Then the following statements hold:

\begin{enumerate}
\item If $G$ is bounded in $L^p(0,T;X_0)$ for $1\leq p < +\infty$, and $\frac{\partial G}{\partial t}$ is bounded in $L^1(0,T;X_1)$, then $G$ is relatively compact in $L^p(0,T;X)$.

\item If $G$ is bounded in $L^\infty(0,T;X_0)$  and $\frac{\partial G}{\partial t}$ is bounded in $L^p(0,T;X_1)$ for $p>1$, then $G$ is relatively compact in $C(0,T;X)$.
\end{enumerate}
\end{lemma}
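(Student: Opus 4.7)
The plan is to follow the classical Aubin--Lions--Simon strategy, whose cornerstone is Ehrling's interpolation lemma: from the compact embedding $X_0 \hookrightarrow X$ together with the continuous embedding $X \hookrightarrow X_1$, for every $\varepsilon>0$ there exists $C_\varepsilon>0$ such that
\[
\|v\|_X \le \varepsilon \|v\|_{X_0} + C_\varepsilon \|v\|_{X_1}, \qquad \forall\, v\in X_0.
\]
This is proved by a short contradiction argument: a counterexample would supply $v_n\in X_0$ with $\|v_n\|_X=1$, $\|v_n\|_{X_0}$ bounded, and $\|v_n\|_{X_1}\to 0$; the compactness of $X_0\hookrightarrow X$ produces a subsequential limit in $X$ of unit norm, while the continuity of $X\hookrightarrow X_1$ forces that limit to vanish. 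With this inequality, both assertions reduce to proving compactness in the weaker ambient space $X_1$, because for any $g_n,g_m\in G$, applied pointwise in $t$ and then integrated,
\[
\|g_n-g_m\|_{L^p(0,T;X)} \le \varepsilon \,\|g_n-g_m\|_{L^p(0,T;X_0)} + C_\varepsilon \,\|g_n-g_m\|_{L^p(0,T;X_1)},
\]
and the first term on the right is uniformly controlled by the hypothesis.

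For statement (1), it thus remains to establish relative compactness of $\{g_n\}$ in $L^p(0,T;X_1)$. I would invoke the Kolmogorov--Riesz--Fr\'echet criterion: uniform $L^p$-boundedness is immediate from $X_0\hookrightarrow X_1$, and a uniform modulus of continuity under time translations is furnished by
\[
\|g_n(t+h)-g_n(t)\|_{X_1}\le \int_{t}^{t+h}\|\partial_t g_n(s)\|_{X_1}\,ds,
\]
which, combined with the $L^1(0,T;X_1)$-bound on $\partial_t g_n$, sends $g_n(\cdot+h)-g_n(\cdot)\to 0$ in $L^1(0,T;X_1)$ uniformly in $n$; interpolating this against the uniform $L^p(0,T;X_1)$-bound upgrades the equicontinuity to the $L^p$-scale.

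For statement (2), the improved integrability $p>1$ of the time derivative gives directly, via H\"older,
\[
\|g_n(t)-g_n(s)\|_{X_1}\le |t-s|^{\,1-1/p}\,\|\partial_t g_n\|_{L^p(0,T;X_1)},
\]
a uniform H\"older-in-time estimate in $X_1$; coupled with the $L^\infty(0,T;X_0)\subset L^\infty(0,T;X_1)$-bound, Arzel\`a--Ascoli yields a subsequence converging in $C([0,T];X_1)$, and a final application of Ehrling's inequality (uniform in $t$) promotes the convergence to $C([0,T];X)$. The main obstacle is part (1): with only $L^1$-in-time control of $\partial_t g_n$, the Arzel\`a--Ascoli route used for (2) is unavailable, so one is forced into the translation-equicontinuity criterion and must carefully interpolate between $L^1$-equi-integrability in $X_1$ and uniform $L^p$-boundedness to recover the missing summability. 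Once this delicate step is carried out, everything else is bookkeeping with the $\varepsilon$-splitting supplied by Ehrling.
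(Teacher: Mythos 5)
The paper offers no proof of this lemma; it is quoted from Simon \cite{jm}, so there is no in-text argument to compare with. Your outline follows the standard Aubin--Lions--Simon strategy (Ehrling's inequality plus a compactness criterion in the weakest space), which is the right route, but two of your steps would fail as written.

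First, the Kolmogorov--Riesz--Fr\'echet criterion you invoke in part (1) is the scalar one. For functions valued in an infinite-dimensional space $X_1$, uniform $L^p$-boundedness together with equicontinuity under time translations does \emph{not} imply relative compactness in $L^p(0,T;X_1)$: take $g_n(t)\equiv e_n$ for an orthonormal sequence $(e_n)$; every translation difference vanishes identically, yet no subsequence converges. The vector-valued criterion (Simon's Theorem~1) requires in addition that the averages $\bigl\{\int_{t_1}^{t_2} g\,dt : g\in G\bigr\}$ be relatively compact in the target space for all $0<t_1<t_2<T$, and this is precisely where the compact embedding $X_0\hookrightarrow X$ must be used a second time: these averages are bounded in $X_0$, hence relatively compact in $X$ and a fortiori in $X_1$. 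The same remark applies to your use of Arzel\`a--Ascoli in part (2): equicontinuity plus pointwise \emph{boundedness} in $X_1$ is not enough in infinite dimensions; you need pointwise relative compactness of $\{g_n(t)\}$, which again comes from the uniform $X_0$-bound composed with the compact embedding.

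Second, the final interpolation in part (1) does not close as stated: smallness of $\tau_h g_n-g_n$ in $L^1(0,T-h;X_1)$ interpolated against a uniform $L^p(0,T;X_1)$-bound gives no smallness in $L^p$, since $L^1$--$L^q$ interpolation reaches $L^p$ only for $q>p$. The correct pairing is with the uniform-in-$t$ bound $\sup_t\|g_n(t+h)-g_n(t)\|_{X_1}\le\|\partial_t g_n\|_{L^1(0,T;X_1)}$, which your displayed inequality already yields; then
\[
\|\tau_h g_n-g_n\|_{L^p(0,T-h;X_1)}\le\Bigl(\sup_t\|g_n(t+h)-g_n(t)\|_{X_1}\Bigr)^{1-1/p}\,\|\tau_h g_n-g_n\|_{L^1(0,T-h;X_1)}^{1/p}\le C\,h^{1/p},
\]
uniformly in $n$, using $\|\tau_h g_n-g_n\|_{L^1(0,T-h;X_1)}\le h\,\|\partial_t g_n\|_{L^1(0,T;X_1)}$ from Fubini. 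With these two repairs your argument becomes the standard proof.
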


The following four lemmas contain some Sobolev inequalities on the product estimates, the interpolation estimates,  the composite function estimates, etc.,  which can be found in many literatures, say Majda \cite{amj}.  We omit the proofs of them.

\begin{lemma}\cite{amj}\label{zhen1}
Let constants $r$, $a$ and $b$ satisfy the relation
$$\frac{1}{r}=\frac{1}{a}+\frac{1}{b},\quad \text{for} \quad 1\leq a,\ b, \ r\leq +\infty.$$  $ \forall s\geq 1$, if $f, g \in W^{s,a} \cap  W^{s,b}(\mathbb{R}^3)$, then
\begin{equation}\begin{split}\label{ku11}
&|\nabla^s(fg)-f \nabla^s g|_r\leq C_s\big(|\nabla f|_a |\nabla^{s-1}g|_b+|\nabla^s f|_b|g|_a\big),
\end{split}
\end{equation}
\begin{equation}\begin{split}\label{ku22}
&|\nabla^s(fg)-f \nabla^s g|_r\leq C_s\big(|\nabla f|_a |\nabla^{s-1}g|_b+|\nabla^s f|_a|g|_b\big),
\end{split}
\end{equation}
where $C_s> 0$ is a constant depending only on $s$, and $\nabla^s f$ ($s>1$) stands for the set of  all partial derivatives $\partial^\zeta_x f$  of order $|\zeta|=s$.
\end{lemma}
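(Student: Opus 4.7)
The plan is to follow the classical Moser-type commutator argument: expand via the Leibniz rule, bound each intermediate term by H\"older's inequality, and then use Gagliardo--Nirenberg interpolation combined with Young's inequality to reduce to the two endpoint norms on the right-hand side.

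Concretely, writing $\nabla^s$ componentwise the Leibniz formula gives
$$\nabla^s(fg)-f\,\nabla^s g=\sum_{k=1}^{s}\binom{s}{k}\,\nabla^k f\cdot\nabla^{s-k}g,$$
so it suffices to estimate $|\nabla^k f\cdot\nabla^{s-k}g|_r$ for each $1\le k\le s$. H\"older's inequality with a pair of exponents $p_k,q_k$ satisfying $1/p_k+1/q_k=1/r$ yields $|\nabla^k f\cdot\nabla^{s-k}g|_r\le|\nabla^k f|_{p_k}\,|\nabla^{s-k}g|_{q_k}$. I would then choose $p_k$ so that the Gagliardo--Nirenberg inequality (Lemma \ref{lem2as}) delivers
$$|\nabla^k f|_{p_k}\le C\,|\nabla f|_a^{\frac{s-k}{s-1}}|\nabla^s f|_b^{\frac{k-1}{s-1}},\qquad |\nabla^{s-k}g|_{q_k}\le C\,|\nabla^{s-1}g|_b^{\frac{s-k}{s-1}}|g|_a^{\frac{k-1}{s-1}}.$$
The exponents $(s-k)/(s-1)$ and $(k-1)/(s-1)$ are forced by homogeneity, and the relation $1/p_k+1/q_k=1/r=1/a+1/b$ is precisely the compatibility needed to make both interpolations admissible simultaneously. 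Applying Young's inequality $X^\theta Y^{1-\theta}\le\theta X+(1-\theta)Y$ then converts each product into a sum of exactly the two norms appearing on the right-hand side of (\ref{ku11}); summing over $k=1,\dots,s$ and absorbing the binomial constants into $C_s$ closes the estimate. For the variant (\ref{ku22}) the only modification is to pick the interpolation for $\nabla^k f$ so that $|\nabla^s f|_a$ appears in place of $|\nabla^s f|_b$ (with the complementary choice for $\nabla^{s-k}g$), which is legitimate since Gagliardo--Nirenberg is symmetric in the two endpoint Lebesgue exponents.

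The main, and essentially only, obstacle is the exponent bookkeeping: one must check that the $p_k,q_k$ selected above actually satisfy the scaling identity
$$\frac{3}{p_k}-k=\frac{k-1}{s-1}\Bigl(\frac{3}{b}-s\Bigr)+\frac{s-k}{s-1}\Bigl(\frac{3}{a}-1\Bigr)$$
required by Gagliardo--Nirenberg in $\mathbb{R}^3$, and the analogous relation for $q_k$ with $g$, and that these are jointly consistent with the H\"older constraint $1/p_k+1/q_k=1/r$. A direct computation shows that the two scaling relations add to give $3/p_k+3/q_k-s=3/a+3/b-s$, which reduces exactly to $1/p_k+1/q_k=1/a+1/b$; hence the hypothesis on $r,a,b$ is both used and sufficient. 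Since this lemma is quoted verbatim from Majda \cite{amj}, I would present in the paper only the Leibniz/H\"older/Young skeleton above and refer the reader to \cite{amj} for the exponent verification and for the standard modifications required at the endpoints $a,b\in\{1,\infty\}$.
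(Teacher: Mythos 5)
The paper gives no proof of this lemma: it is quoted from Majda \cite{amj} and the authors explicitly omit the proofs of Lemmas \ref{zhen1}--\ref{moser}. Your reconstruction is the standard Moser--Klainerman--Majda commutator argument (Leibniz expansion, H\"older with exponents $1/p_k+1/q_k=1/r$, Gagliardo--Nirenberg interpolation with weights $\tfrac{s-k}{s-1}$ and $\tfrac{k-1}{s-1}$, then Young), the exponent bookkeeping you carry out is consistent, and modulo the endpoint cases $a,b\in\{1,\infty\}$ that you correctly flag and defer to \cite{amj}, the proof is correct and matches the cited source.
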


\begin{lemma}\cite{amj}\label{gag113}
If  functions $f,\ g \in H^s$ and $s>\frac{3}{2}$, then  $f g \in H^s$,  and  there exists  a constant $C_s$
 depending only on $s$ such that
$$
\|fg\|_{s} \leq C_s \|f\|_s \|g\|_s.
$$
\end{lemma}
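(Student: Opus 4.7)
The plan is to establish that $H^s(\mathbb{R}^3)$ is a Banach algebra whenever $s > \tfrac32$. I would argue on the Fourier side, using the characterization $\|h\|_s^2 = \int_{\mathbb{R}^3}(1+|\xi|^2)^s|\hat h(\xi)|^2\,d\xi$ together with the convolution identity $\widehat{fg} = \hat f * \hat g$. The key pointwise ingredient is the elementary bound
$$(1+|\xi|^2)^{s/2} \leq C_s\bigl[(1+|\eta|^2)^{s/2} + (1+|\xi-\eta|^2)^{s/2}\bigr], \quad \forall\, \xi,\eta\in\mathbb{R}^3,$$
which is a direct consequence of $|\xi|^2 \leq 2|\eta|^2 + 2|\xi-\eta|^2$ for $s\geq 0$. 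Multiplying $|\widehat{fg}(\xi)|$ by $(1+|\xi|^2)^{s/2}$ and inserting this inequality into the convolution integral splits the estimate into two symmetric pieces.

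For the first of these pieces, one has the pointwise bound
$$(1+|\xi|^2)^{s/2}|\widehat{fg}(\xi)| \leq C_s\int_{\mathbb{R}^3}\bigl[(1+|\eta|^2)^{s/2}|\hat f(\eta)|\bigr]\,|\hat g(\xi-\eta)|\,d\eta + (\text{symmetric term}),$$
and Young's convolution inequality $\|F*G\|_{L^2}\leq \|F\|_{L^2}\|G\|_{L^1}$ then gives
$$\|fg\|_s \leq C_s\bigl(\|f\|_s\,\|\hat g\|_{L^1} + \|g\|_s\,\|\hat f\|_{L^1}\bigr).$$
It remains to control $\|\hat f\|_{L^1}$ by $\|f\|_s$, which is precisely where the hypothesis $s>\tfrac32$ is exploited. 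By the Cauchy-Schwarz inequality,
$$\|\hat f\|_{L^1} \leq \Big(\int_{\mathbb{R}^3}(1+|\xi|^2)^{-s}\,d\xi\Big)^{1/2}\|f\|_s,$$
and the weight $(1+|\xi|^2)^{-s}$ is integrable on $\mathbb{R}^3$ exactly when $s>\tfrac32$. Combining these estimates yields $\|fg\|_s \leq C_s\|f\|_s\|g\|_s$.

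I do not expect a serious obstacle here; the argument is textbook and the sharp dimensional threshold $s>\tfrac32$ appears transparently in the integrability of $(1+|\xi|^2)^{-s}$ over $\mathbb{R}^3$. As a real-variable alternative avoiding the Fourier transform, for a positive integer $s\geq 2$ one may apply the Leibniz rule together with H\"older's inequality and the Gagliardo--Nirenberg interpolations from Lemma~\ref{lem2as}, using the Sobolev embedding $H^s\hookrightarrow L^\infty$ (valid once $s>\tfrac32$) to close the top-order terms; the fractional case then follows by real interpolation between consecutive integer Sobolev spaces. The Fourier-analytic route is, however, more efficient since it handles all admissible $s$ uniformly.
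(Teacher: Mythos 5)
Your Fourier-side argument is correct and complete: the elementary weight inequality $(1+|\xi|^2)^{s/2}\leq C_s\bigl[(1+|\eta|^2)^{s/2}+(1+|\xi-\eta|^2)^{s/2}\bigr]$, Young's inequality $\|F*G\|_{L^2}\leq\|F\|_{L^2}\|G\|_{L^1}$, and the Cauchy--Schwarz bound $\|\hat f\|_{L^1}\leq\bigl(\int(1+|\xi|^2)^{-s}d\xi\bigr)^{1/2}\|f\|_s$ (finite precisely for $s>3/2$ in $\mathbb{R}^3$) assemble into the stated algebra estimate. The paper itself omits the proof and simply cites Majda, where the result is usually obtained by the real-variable route you mention as an alternative: the Moser-type product estimate (essentially Lemma~\ref{moser} of the paper) combined with the embedding $H^s\hookrightarrow L^\infty$ for $s>3/2$. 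Your Fourier proof has the advantage of treating all real $s>3/2$ in one stroke, whereas the Moser route needs a separate interpolation step for non-integer $s$. One caveat on your side remark only: interpolating ``between consecutive integer Sobolev spaces'' does not cover $s\in(\tfrac32,2)$, since the lower endpoint $H^1(\mathbb{R}^3)$ is not an algebra; one would instead interpolate between two fractional exponents both exceeding $\tfrac32$, or simply rely on the Fourier argument, which is your main proof and needs no such patch.
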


\begin{lemma}\cite{amj}\label{gag111}
If  $u\in H^s$, then for any $s'\in[0,s]$,  there exists  a constant $C_s$ depending only on $s$ such that
$$
\|u\|_{s'} \leq C_s \|u\|^{1-\frac{s'}{s}}_0 \|u\|^{\frac{s'}{s}}_s.
$$
\end{lemma}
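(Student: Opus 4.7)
The plan is to work entirely on the Fourier side: by Plancherel's theorem the Sobolev norm satisfies $\|u\|_s^2 = \int_{\mathbb{R}^3}(1+|\xi|^2)^s |\hat{u}(\xi)|^2 \dif\xi$ (up to a constant that can be absorbed in $C_s$), so the claimed estimate is really a log-convexity statement for the family of weighted $L^2$-norms indexed by $s'\in[0,s]$. The key tool will be Hölder's inequality applied to a suitable factorization of the integrand.

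Concretely, I would set $\theta = s'/s \in [0,1]$ and write
$$(1+|\xi|^2)^{s'}|\hat{u}(\xi)|^2 = \Bigl[(1+|\xi|^2)^s|\hat{u}(\xi)|^2\Bigr]^{\theta}\Bigl[|\hat{u}(\xi)|^2\Bigr]^{1-\theta},$$
noting that $s\theta = s'$ matches the required exponent. Applying Hölder's inequality with conjugate exponents $p=1/\theta$ and $q=1/(1-\theta)$ then yields
$$\int_{\mathbb{R}^3}(1+|\xi|^2)^{s'}|\hat{u}|^2 \dif\xi \leq \Bigl(\int_{\mathbb{R}^3}(1+|\xi|^2)^s|\hat{u}|^2 \dif\xi\Bigr)^{\theta}\Bigl(\int_{\mathbb{R}^3}|\hat{u}|^2 \dif\xi\Bigr)^{1-\theta},$$
which, via Plancherel once more, is precisely $\|u\|_{s'}^2 \leq C_s^2\|u\|_s^{2\theta}\|u\|_0^{2(1-\theta)}$. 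Taking square roots gives the lemma.

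The only mild subtlety is the degeneration of the Hölder exponents at $s'=0$ and $s'=s$, but these endpoints reduce to trivial identities ($\|u\|_0\leq \|u\|_0$ and $\|u\|_s\leq \|u\|_s$) and can be treated separately. I do not anticipate any real obstacle: the lemma is a textbook interpolation estimate, and the Fourier--Hölder approach above delivers it in essentially one line.
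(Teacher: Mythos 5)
Your proof is correct: the Plancherel--H\"older (log-convexity) argument is the standard proof of this interpolation inequality, and it is essentially the argument in the cited reference (Majda); the paper itself omits the proof entirely. Note that with the Fourier-side definition of $\|\cdot\|_{s}$ your argument actually gives $C_s=1$, the constant only entering through the equivalence of norms if $H^s$ is normed via derivatives.
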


\begin{lemma}\cite{amj}\label{moser}
\begin{enumerate}
\item If $f,\ g \in H^s \cap L^\infty$ and $|\zeta| \leq s$, then  there exists  a constant $C_s$ depending only on $s$ such that
\begin{equation}\label{moser1}
\begin{split}
\|\partial^\zeta_x (fg)\|_s\leq C_s(|f|_{\infty}|\nabla^s g|_2+|g|_{\infty}|\nabla^s f|_{2}).
\end{split}
\end{equation}

\item Let   $u(x)$ be a continuous function taking its values in some open set $G$ such that $u\in H^s \cap L^\infty$, and $g(u)$ be a smooth vector-valued function on  $G$. Then for $s\geq 1$,   there exists  a constant $C_s$ depending  only on $s$ such that
\begin{equation}\label{moser2}
\begin{split}
|\nabla^s g(u)|_2\leq C_s\Big \|\frac{\partial g}{\partial u }\Big\|_{s-1}|u|^{s-1}_{\infty}|\nabla^s u|_{2}.
\end{split}
\end{equation}
\end{enumerate}
\end{lemma}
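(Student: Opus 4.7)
For part (1), the plan is to expand via Leibniz
$\partial^\zeta_x(fg)=\sum_{\beta\le\zeta}\binom{\zeta}{\beta}\,\partial^\beta_x f\,\partial^{\zeta-\beta}_x g$
and estimate summand by summand in $L^2$. The two endpoint contributions $\beta=0$ and $\beta=\zeta$ give directly $|f|_\infty|\partial^\zeta_x g|_2$ and $|g|_\infty|\partial^\zeta_x f|_2$, which already sit inside the right-hand side since $|\zeta|\le s$ (using $|\partial^\alpha h|_2\le|\nabla^{|\alpha|}h|_2$ and interpolating against $|h|_\infty$ if $|\zeta|<s$). For a mixed term with $1\le|\beta|\le|\zeta|-1$, I would split the $L^2$ norm by Hölder with exponents $p_1=2s/|\beta|$ and $p_2=2s/|\zeta-\beta|$, so that $1/p_1+1/p_2=|\zeta|/(2s)\le 1/2$, and invoke the Gagliardo--Nirenberg inequality of Lemma~\ref{lem2as} to obtain $|\partial^\beta_x f|_{p_1}\le C|f|_\infty^{1-\theta_1}|\nabla^s f|_2^{\theta_1}$ with $\theta_1=|\beta|/s$, and similarly for $\partial^{\zeta-\beta}_x g$ with $\theta_2=|\zeta-\beta|/s$. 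Since $\theta_1+\theta_2=|\zeta|/s\le 1$, Young's inequality converts the product $|f|_\infty^{1-\theta_1}|\nabla^s f|_2^{\theta_1}|g|_\infty^{1-\theta_2}|\nabla^s g|_2^{\theta_2}$ into a sum controlled by $|f|_\infty|\nabla^s g|_2+|g|_\infty|\nabla^s f|_2$, which closes the estimate after collecting the multinomial constants.

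For part (2), I would rely on the Faà di Bruno chain rule
$$\partial^\zeta_x g(u)=\sum_{1\le k\le|\zeta|}g^{(k)}(u)\sum_{\substack{\beta_1+\cdots+\beta_k=\zeta\\ |\beta_j|\ge1}}C_{\zeta,\beta}\prod_{j=1}^k\partial^{\beta_j}_x u,$$
take the $L^2$ norm, and apply Hölder with exponents $p_j=2s/|\beta_j|$, which are admissible since $\sum 1/p_j=|\zeta|/(2s)\le 1/2$. Each factor $|\partial^{\beta_j}_x u|_{p_j}$ is then bounded by Gagliardo--Nirenberg as $C|u|_\infty^{1-|\beta_j|/s}|\nabla^s u|_2^{|\beta_j|/s}$. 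Forming the product and using $\sum_{j=1}^k|\beta_j|=|\zeta|\le s$, Young's inequality collapses the interpolated powers of $|\nabla^s u|_2$ into a single occurrence multiplied by $|u|_\infty^{k-1}$. Finally, $|g^{(k)}(u)|_\infty$ is controlled via the Sobolev embedding $H^{s-1}\hookrightarrow L^\infty$ by $\|\partial g/\partial u\|_{s-1}$ for $1\le k\le s$, yielding the advertised bound $C_s\|\partial g/\partial u\|_{s-1}|u|_\infty^{s-1}|\nabla^s u|_2$.

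The main obstacle is not analytical but combinatorial bookkeeping. For every multi-index decomposition arising in Leibniz or Faà di Bruno, one must verify that the chosen Hölder exponents $p_j$ lie in the admissible range $[2,\infty]$, that the associated Gagliardo--Nirenberg parameters $\theta_j\in[0,1]$ are consistent with the dimensional scaling relation in Lemma~\ref{lem2as}, and that the degenerate cases $\theta_j\in\{0,1\}$ (which would nominally force an $L^\infty$ norm of a derivative) are instead absorbed by the $L^\infty$ factor of $f$, $g$, or $u$ itself. Once this exponent accounting is laid out systematically and the endpoints $|\zeta|\in\{0,s\}$ are treated separately, the rest of the argument is a routine assembly, which is presumably why the authors cite Majda and omit the detail.
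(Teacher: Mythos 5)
The paper does not prove this lemma at all: it is quoted from Majda \cite{amj} and the authors explicitly state that the proofs are omitted. Your argument is the standard Moser calculus proof (Leibniz plus Gagliardo--Nirenberg--Moser interpolation plus H\"older for the product estimate; Fa\`a di Bruno plus the same interpolation for the composition estimate), which is exactly the proof in Majda, so in substance you have reconstructed the intended argument. Note also that you have implicitly (and correctly) repaired a typo in the statement: the left-hand side of \eqref{moser1} should be $|\partial^\zeta_x(fg)|_2$, not $\|\partial^\zeta_x(fg)\|_s$, since an $H^s$ norm of $\partial^\zeta_x(fg)$ involves up to $2s$ derivatives and cannot be controlled by $s$ derivatives on the right.

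Three technical points in your write-up need repair, though none is fatal. First, the H\"older exponents $p_1=2s/|\beta|$, $p_2=2s/|\zeta-\beta|$ satisfy $1/p_1+1/p_2=|\zeta|/(2s)$, which equals $1/2$ only when $|\zeta|=s$; when $|\zeta|<s$ you have $\sum 1/p_j<1/2$, and on the unbounded domain $\mathbb{R}^3$ H\"older then does \emph{not} give an $L^2$ bound on the product. The fix is standard: either normalize the exponents to $|\zeta|$ (take $p_1=2|\zeta|/|\beta|$, $p_2=2|\zeta|/|\zeta-\beta|$, obtain the estimate with $|\nabla^{|\zeta|}f|_2$, $|\nabla^{|\zeta|}g|_2$ on the right, and then interpolate these up to $|\nabla^s\cdot|_2$ via $|\nabla^{|\zeta|}h|_2\le C|h|_\infty^{1-|\zeta|/s}|\nabla^s h|_2^{|\zeta|/s}$ and Young), or prove only the top-order case $|\zeta|=s$ and deduce the rest the same way. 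For part (2) only $|\zeta|=s$ is needed, so your exponents are exactly right there. Second, the interpolation inequality you actually invoke, $|\nabla^j h|_{2s/j}\le C|h|_\infty^{1-j/s}|\nabla^s h|_2^{j/s}$, is the Gagliardo--Nirenberg--Moser inequality from Majda and is \emph{not} contained in Lemma \ref{lem2as} of this paper, whose statement is restricted to $p\in[2,6]$ and to first derivatives; you should cite or prove the general version separately. Third, in part (2) the quantity $\|\partial g/\partial u\|_{s-1}$ is to be read as a $C^{s-1}$-type supremum norm of $\partial g/\partial u$ over the range of $u$ (as in Majda), so the bound $|g^{(k)}(u)|_\infty\le\|\partial g/\partial u\|_{C^{s-1}}$ for $1\le k\le s$ is immediate; invoking the Sobolev embedding $H^{s-1}\hookrightarrow L^\infty$ is both unnecessary and unavailable for small $s$ (it requires $s-1>3/2$ in three dimensions). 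With these adjustments your proof is complete and is the canonical one.
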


The following lemma is a useful tool to improve weak convergence to strong convergence.
\begin{lemma}\cite{amj}\label{zheng5}
If the function sequence $\{w_n\}^\infty_{n=1}$ converges weakly  to $w$ in a Hilbert space $X$, then it converges strongly to $w$ in $X$ if and only if
$$
\|w\|_X \geq \lim \text{sup}_{n \rightarrow \infty} \|w_n\|_X.
$$
\end{lemma}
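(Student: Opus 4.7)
The plan is to prove the equivalence via the expansion
\[
\|w_n - w\|_X^2 = \|w_n\|_X^2 - 2\operatorname{Re}\langle w_n, w\rangle_X + \|w\|_X^2,
\]
which reduces the question of strong convergence to control of norms and inner products.

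For the ``only if'' direction, if $w_n \to w$ strongly then the norm is continuous, so $\|w_n\|_X \to \|w\|_X$ and in particular $\limsup_{n\to\infty}\|w_n\|_X = \|w\|_X$; this direction is immediate.

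For the nontrivial ``if'' direction, I would begin from the assumption $w_n \rightharpoonup w$ weakly together with $\limsup_{n\to\infty}\|w_n\|_X \leq \|w\|_X$. The first observation is that weak convergence yields $\langle w_n, w\rangle_X \to \langle w, w\rangle_X = \|w\|_X^2$, since $v \mapsto \langle v, w\rangle_X$ is a continuous linear functional on $X$. Substituting into the identity above and taking $\limsup$ as $n\to\infty$ gives
\[
\limsup_{n\to\infty} \|w_n - w\|_X^2 \leq \|w\|_X^2 - 2\|w\|_X^2 + \|w\|_X^2 = 0,
\]
where the hypothesis on $\limsup\|w_n\|_X$ is exactly what is needed to bound the first term by $\|w\|_X^2$. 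Since $\|w_n - w\|_X^2 \geq 0$, this forces $\|w_n - w\|_X \to 0$, i.e.\ strong convergence.

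No essential obstacle is anticipated; the argument is a standard Hilbert-space computation. The only conceptual point worth emphasizing is that weak convergence automatically supplies the lower bound $\|w\|_X \leq \liminf_{n\to\infty}\|w_n\|_X$, so that the hypothesized upper bound on $\limsup\|w_n\|_X$ actually upgrades to full convergence of norms $\|w_n\|_X \to \|w\|_X$. Coupled with weak convergence in a Hilbert space, this is well known to imply strong convergence through the identity used above, and no parallelogram-type refinement is required.
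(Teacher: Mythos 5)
Your argument is correct and is the standard one: the paper itself omits the proof (citing Majda), and the expected argument is exactly the expansion $\|w_n-w\|_X^2=\|w_n\|_X^2-2\operatorname{Re}\langle w_n,w\rangle_X+\|w\|_X^2$ combined with weak convergence of the inner product term and weak lower semicontinuity of the norm. Both directions are handled properly, so there is nothing to add.
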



\section{Existence of regular solutions}

In this section, we aim at proving Theorem \ref{th2} and Corollary \ref{co2}. To this end, we first reformulated our main problem (\ref{eq:1.1})-(\ref{eq:2.211}) into a more convenient form. Since we will repeat the integration over $\mathbb{R}^3$ for many times, we will adopt to the following simplified notation
$$\int f=\int_{\mathbb{R}^3} f\text{d}x, $$
throughout this paper without further specification. All other integrals will be specified when they appear.

\subsection{Reformulation}
Setting $\phi=\rho^{\frac{\delta-1}{2}}$,   system (\ref{eq:1.1})  can be rewritten as
\begin{equation}
\begin{cases}
\label{eq:cccq}
\displaystyle
\phi_t+u\cdot \nabla \phi+\frac{\delta-1}{2}\phi \text{div} u=0,\\[10pt]
\displaystyle
u_t+u\cdot\nabla u +\frac{2A\gamma}{\delta-1}\phi^{\frac{2r-\delta-1}{\delta-1}}\nabla \phi+\phi^2 Lu=\nabla \phi^2 \cdot Q(u),
 \end{cases}
\end{equation}
where
\begin{equation} \label{eq:5.2qq}
\begin{split}
Q(u)=&\frac{\delta}{\delta-1}\big(\alpha(\nabla u+(\nabla u)^\top)+\beta \text{div}u \mathbb{I}_3\big)=\frac{\delta}{\delta-1}\mathbb{S}(u).
\end{split}
\end{equation}
The  initial data are given by
\begin{equation} \label{sfana1}
\begin{split}
&(\phi,u)|_{t=0}=(\phi_0,u_0)(x)=\left(\rho^{\frac{\delta-1}{2}}_0(x),u_0(x)\right),\quad x\in \mathbb{R}^3,
\end{split}
\end{equation}
with the far field behavior
\begin{equation}\label{sfanb1}
\begin{split}
(\phi,u)\rightarrow (\overline{\phi},0)=\left(\overline{\rho}^{\frac{\delta-1}{2}},0\right),\quad \text{as}\quad  |x|\rightarrow +\infty,\quad t>0.
\end{split}
\end{equation}

 To prove  Theorem \ref{th2}, we first establish the following existence result for the reformulated problem (\ref{eq:cccq})-(\ref{sfanb1}), and then in subsection $3.6$
 we will show that this result indeed implies Theorem \ref{th2}.
\begin{theorem}\label{ths1}
 If the initial data $( \phi_0, u_0)(x)$ satisfy
\begin{equation}\label{th78qq}
\begin{split}
&  \phi_0 \geq 0, \quad (\phi_0-\overline{\phi}, u_0)\in H^3,
\end{split}
\end{equation}
then there exists a positive time $T_*$ and a unique strong solution $( \phi, u)$ on $\mathbb{R}^3\times[0, T_*]$ to Cauchy problem (\ref{eq:cccq})-(\ref{sfanb1}), that is, $( \phi, u)$ is a solution of the Cauchy problem (\ref{eq:cccq})-(\ref{sfanb1}) in the sense of distribution and satisfies
\begin{equation}\label{reg11qq}\begin{split}
& \phi-\overline{\phi} \in C([0,T_*];H^3),\quad  \phi_t \in C([0,T_*];H^2),\\
& u\in C([0,T_*]; H^{s'})\cap L^\infty([0,T]; H^3), \quad \phi \nabla^4 u\in L^2([0,T_*] ; L^2), \\
& u_t \in C([0,T_*]; H^1)\cap L^2([0,T_*] ; D^2),
\end{split}
\end{equation}
for any constant $s' \in[2,3)$.
\end{theorem}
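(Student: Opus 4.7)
The plan is to follow the four-step program announced after Theorem 1.1. First I would linearize: given a reference velocity $v$ in an appropriate class and a small regularization parameter $\eta\in(0,1]$, solve the transport equation
$$\phi_t+v\cdot\nabla\phi+\tfrac{\delta-1}{2}\phi\,\text{div}\,v=0,\quad \phi|_{t=0}=\phi_0,$$
by characteristics, so that $\phi-\overline{\phi}\in C([0,T];H^3)$ with bounds depending only on $\|v\|_{L^1_tH^3}$ and $\|\phi_0-\overline\phi\|_{H^3}$; then, with this $\phi$ frozen, solve the linearized momentum equation with artificial viscosity
$$u_t+v\cdot\nabla u+\tfrac{2A\gamma}{\delta-1}\phi^{(2\gamma-\delta-1)/(\delta-1)}\nabla\phi+(\phi^2+\eta^2)Lu=\nabla\phi^2\cdot Q(v),\quad u|_{t=0}=u_0.$$
For fixed $\eta>0$ this is uniformly parabolic, so standard Galerkin/semigroup theory produces a unique $u^\eta$ of class $L^2_tH^4\cap C_tH^3$, albeit with bounds that blow up as $\eta\to 0$.

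The heart of the proof is Step 2: $\eta$-independent a priori estimates. For $\phi$, standard transport estimates applied to $\nabla^k\phi$ with $k\leq 3$, combined with the Moser-type inequalities of Lemma 2.5, give $\|\phi-\overline{\phi}\|_{H^3}$ in terms of $\int_0^t\|v\|_{H^3}$. For $u$, elliptic regularity is unavailable where $\phi$ vanishes, so I would rely on the symmetric hyperbolic structure carried by $u_t+v\cdot\nabla u$: apply $\partial^\zeta_x$ for $|\zeta|\leq 3$, pair with $\partial^\zeta_x u$, and use Lemma 2.3 to handle the commutators $[\partial^\zeta_x,v\cdot\nabla]u$. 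The term $(\phi^2+\eta^2)Lu$ contributes the good weighted dissipation $\alpha\int(\phi^2+\eta^2)|\nabla\partial^\zeta_x u|^2$ plus commutator remainders of the form $\nabla(\phi^2)\cdot\nabla^{|\zeta|+1}u$ which are absorbed by the weighted estimate $\phi\nabla^{|\zeta|+1}u\in L^2$ via Cauchy--Schwarz, at the cost of a lower-order term. Differentiating the equation in $t$ and iterating yields control on $u_t$ in $C_tH^1\cap L^2_tD^2$. The outcome, on a short time $T_*$ independent of $\eta$, is a uniform bound
$$\sup_{[0,T_*]}\bigl(\|\phi-\overline\phi\|_3^2+\|u\|_3^2+\|u_t\|_1^2\bigr)+\int_0^{T_*}\!\!\bigl(|\phi\nabla^4u|_2^2+|u_t|_{D^2}^2\bigr)\,dt\leq C_0.$$
With these bounds in hand I would pass $\eta\to 0$ via Aubin--Lions (Lemma 2.2), obtaining strong convergence of $u^\eta$ in $C([0,T_*];H^{s'})$ for $s'\in[2,3)$, weak-$*$ convergence in $L^\infty_tH^3$, and weak $L^2_tL^2$ convergence of $\phi\nabla^4 u^\eta$; the artificial term $\eta^2Lu^\eta\to 0$ in distributions since $\eta\|u^\eta\|_{H^2}$ is controlled. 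This produces a strong solution of the linearized-in-$v$ problem (without regularization).

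Finally, Step 4 is a Picard iteration: define $\mathcal{T}:v\mapsto u$ on the ball of functions with $\sup_{[0,T_*]}\|u\|_3\leq M$ (and matching bounds on $u_t$), $M$ chosen from the a priori estimate. Using the $\eta$-free system for two iterates and taking differences $\delta\phi=\phi^{(1)}-\phi^{(2)}$, $\delta u=u^{(1)}-u^{(2)}$, I would derive a low-regularity contraction estimate of the form
$$\sup_{[0,T_*]}\bigl(|\delta\phi|_2^2+|\delta u|_2^2\bigr)\leq CT_*\sup_{[0,T_*]}\bigl(|\delta v|_2^2+\text{lower order}\bigr),$$
shrinking $T_*$ if necessary; the uniform higher-regularity bounds on each iterate interpolate against this low-regularity contraction to identify the fixed point inside the higher-regularity class. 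Uniqueness is the same difference estimate applied to two candidate solutions with identical data. The main obstacle I anticipate is Step 2 for $u$: without elliptic regularity at vacuum, closing the $H^3$ bound forces a careful accounting of every commutator involving $\nabla\phi^2$ and $\nabla^{\geq 1}u$, absorbing them using only the weighted dissipation $\phi\nabla^{k+1}u\in L^2$ and the pure transport estimate $u_t+v\cdot\nabla u$ rather than any unweighted parabolic gain.
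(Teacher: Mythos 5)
Your overall architecture (linearize with artificial viscosity, $\eta$-independent estimates exploiting the transport structure $u_t+v\cdot\nabla u$ and the weighted dissipation, Aubin--Lions limit, Picard iteration with a low-norm contraction) is exactly the paper's. But there is a genuine gap in your linearization of the continuity equation. You write $\phi_t+v\cdot\nabla\phi+\frac{\delta-1}{2}\phi\,\mathrm{div}\,v=0$ and claim the resulting $H^3$ bound on $\phi-\overline{\phi}$ depends only on $\|v\|_{L^1_tH^3}$ and the data. That claim is false: estimating $|\phi|_{D^3}$ requires controlling $\partial_x^3\big((\cdot)\,\mathrm{div}\,v\big)$, which contains $\nabla^4 v$, and $v\in H^3$ gives no control of fourth derivatives. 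Since the degenerate momentum equation only yields the \emph{weighted} bound $\psi\nabla^4 v\in L^2_tL^2$ (not $\nabla^4 v\in L^2$), your version would force you to bound $\phi\,\nabla^4 v$ in $L^1_tL^2$ with $\phi$ the \emph{new} unknown, which does not follow from $\psi\nabla^4 v\in L^2_tL^2$ without a pointwise comparison $\phi\lesssim\psi$ that you have not established. The paper's linearization deliberately puts $\frac{\delta-1}{2}\psi\,\mathrm{div}\,v$ (with $\psi$ the \emph{previous} iterate) in the continuity equation precisely so that the offending term is $\psi\nabla^4 v$, which is controlled by the induction hypothesis; this compatibility between the coefficient of $\mathrm{div}\,v$ in $(\ref{li4})_1$ and the weight $\phi^2$ in the dissipation of $(\ref{li4})_2$ is the key design choice, and your scheme as written does not close at the $D^3$ level for $\phi$.

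A smaller point: your low-norm contraction $\sup(|\delta\phi|_2^2+|\delta u|_2^2)\leq CT_*\sup(|\delta v|_2^2+\cdots)$ cannot be closed in $L^2$ alone, because the difference of the source terms $\nabla(\phi^k)^2\cdot\big(Q(u^k)-Q(u^{k-1})\big)$ produces $\phi^k\nabla\delta v$, which is only controlled through the weighted dissipation. The contraction must therefore be run in the norm $\sup_t|\cdot|_2^2+\int_0^t|\phi\nabla(\cdot)|_2^2$, with a small parameter $\nu$ splitting the right-hand side, as in \eqref{go64}--\eqref{kexi}; this is a repair within your framework rather than a change of strategy, but it is needed for the fixed-point argument to go through.
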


We will prove this theorem in the subsequent four subsections $3.2$-$3.5$ as explained in the introduction.

\subsection{Linearization with an artificial viscosity} In order to construct the local strong  solutions for the nonlinear problem (\ref{eq:cccq})-(\ref{sfanb1}), we first consider the following  linearized problem:
\begin{equation}\label{li4}
\begin{cases}
\displaystyle
\phi_t+v\cdot \nabla \phi+\frac{\delta-1}{2}\psi \text{div}v=0,\\[8pt]
\displaystyle
u_t+v\cdot\nabla u +\frac{A\gamma}{\gamma-1} \nabla \phi^{\frac{2\gamma-2}{\delta-1}}+(\phi^2+\eta^2) Lu=\nabla \phi^2 \cdot Q(v),\\[8pt]
(\phi,u)|_{t=0}=(\phi_0(x),u_0(x)),\quad x\in \mathbb{R}^3,\\[8pt]
\displaystyle
(\phi,u)\rightarrow (\overline{\phi},0),\quad \text{as}\quad  |x|\rightarrow +\infty,\quad t>0,
 \end{cases}
\end{equation}
where $\eta \in (0,1)$ is a  constant and
\begin{equation} \label{sseq:5.2qq}
\begin{split}
Q(v)=&\frac{\delta}{\delta-1}\big(\alpha(\nabla v+(\nabla v)^\top)+\beta \text{div}v \mathbb{I}_3\big)=\frac{\delta}{\delta-1}\mathbb{S}(v).
\end{split}
\end{equation}
Here $\psi$ and $v=\left(v^{(1)},v^{(2)},v^{(3)}\right)^\top\in \mathbb{R}^3$ are given functions satisfying the initial assumption $(\psi, v)(t=0,x)=(\phi_0, u_0)$ and the following regularities:
\begin{equation}\label{vg}
\begin{split}
& \psi-\overline{\phi}\in C([0,T]; H^3), \quad  \psi_t \in C([0,T]; H^2),\\
& v\in C([0,T] ; H^{s'})\cap L^\infty([0,T]; H^3) ,\quad   \psi \nabla^4 v\in L^2([0,T] ; L^2), \\
& v_t \in C([0,T] ; H^1)\cap L^2([0,T] ; D^2),
\end{split}
\end{equation}
for any constant $s'\in[2,3)$ and fixed time $T>0$.\\[2pt]

Such a linearization is specially designed to serve our purpose. Unlike
 \cite{sz3}, here we have to keep the positive and symmetric hyperbolic operator in the momentum equations. More precisely, we linearize
$$ u_t+u\cdot  \nabla u \rightarrow    u_t+v \cdot \nabla u,$$
instead of $ u_t+u\cdot  \nabla u \rightarrow    u_t+v \cdot \nabla v$ as in  \cite{sz3},
which plays a key role in the analysis on the regularities of velocity $u$.  Generally speaking, in the following proof of the existence, we will make a full use of   the transport property of the  hyperbolic part $ u_t+v \cdot \nabla u$.  On the other hand,  the strong diffusion term $(\phi^2+\eta^2) Lu$ helps to  ensure the global existence of the linearized problem. Moreover, we remark that there exists a carefully chosen compatibility  between the  last term $\frac{\delta-1}{2}\psi \text{div}v$ on the left hand side of $(\ref{li4})_1$ and the viscosity  term $(\phi^2+\eta^2) Lu$ in $(\ref{li4})_2$. Because of the lack of positive lower bound for $\phi_0$, it is not visible to show that there exists some constant $C$ independent of $\eta$ such that
$$
\|u\|_{L^2([0,T];D^4)}\leq C.
$$
Therefore, in order to obtain a finite upper bound of $|\phi|_{D^3}$ independent of $\eta$   from the continuity equation $ (\ref{li4}) _1$, we need a good estimate like
$\psi \nabla^4 v \in L^2([0,T];L^2)$.
For our solution $(\phi,u)$,
$  \|\phi \nabla^4 u\|_{L^2([0,T];L^2)}\leq C$
 can be obtained by the smoothing property of the artificial viscosity term $(\phi^2+\eta^2) Lu$. It is not
clear if other linearization approaches will achieve the same goal, this one \eqref{li4} succeeded.
The details can be found in Lemmas \ref{f2}-\ref{s5}.\\

As mentioned above, a rather standard method gives the following global existence of a strong solution $(\phi, u)$ to (\ref{li4})  for each fixed $\eta>0$.

 \begin{lemma}\label{lem1} For each fixed $\eta>0$ and any $T>0$, assume  that the initial data $(\phi_0,u_0)$ satisfy (\ref{th78qq}). Then there exists  a unique strong solution $(\phi,u)(x,t)$ in $\mathbb{R}^3 \times [0,T]$ to (\ref{li4}), that is, a solution satisfying the Cauchy problem (\ref{li4}) with the regularities
\begin{equation}\label{reggh}\begin{split}
& \phi-\overline{\phi} \in C([0,T]; H^3), \ \phi _t \in C([0,T]; H^2), \\
& u\in C([0,T]; H^{s'})\cap L^\infty([0,T]; H^3), \quad \phi \nabla^4 u\in L^2([0,T] ; L^2),  \\
& u_t \in C([0,T]; H^1)\cap L^2([0,T] ; D^2).
\end{split}
\end{equation}
\end{lemma}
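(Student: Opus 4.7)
The plan is to decouple the linear system (\ref{li4}): first solve the transport equation $(\ref{li4})_1$ for $\phi$ from the given $(v,\psi)$, then, treating the resulting $\phi$ as a known coefficient, solve the momentum equation $(\ref{li4})_2$ as a linear uniformly parabolic system in $u$. The point of the artificial viscosity is that $\phi^2+\eta^2\geq\eta^2>0$, so at this stage the momentum equation is genuinely uniformly parabolic and classical linear theory applies; no estimate in this lemma has to be uniform in $\eta$.

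For the continuity equation, I would work with the unknown $\phi-\overline{\phi}$, which satisfies a linear transport equation with source $-\frac{\delta-1}{2}\psi\,\text{div}\,v\in L^\infty(0,T;H^3)$. Since $v\in L^\infty(0,T;H^3)\hookrightarrow L^\infty(0,T;W^{1,\infty})$, the characteristic flow is well-defined and Lipschitz, giving existence and uniqueness at the $L^2\cap L^\infty$ level. To promote this to $L^\infty(0,T;H^3)$ one applies $\partial^\zeta_x$ for $|\zeta|\leq 3$, mollifies, and uses the commutator bounds of Lemma \ref{zhen1} together with Lemmas \ref{gag113}--\ref{moser} to close a Gronwall estimate; time continuity in $H^3$ follows from the standard weak-continuity plus norm-continuity argument, and $\phi_t\in C([0,T];H^2)$ is then read off directly from $(\ref{li4})_1$ combined with the product rule and the assumed regularity of $(\psi,v)$.

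With $\phi\in L^\infty(0,T;H^3)$ in hand, I would construct $u$ by a Galerkin scheme in the spatial variables (e.g.\ using a smooth approximating basis with appropriate far-field behavior). The coefficient $(\phi^2+\eta^2)$ is bounded and bounded below by $\eta^2$, so the usual parabolic energy estimates apply: test against $u$ to get $L^\infty_tL^2_x\cap L^2_tH^1_x$; differentiate once, twice, and three times in $x$ and use Lemmas \ref{zhen1}--\ref{moser} on the terms arising from $v\cdot\nabla u$, from $(\phi^2+\eta^2)Lu$, and from the source $-\frac{A\gamma}{\gamma-1}\nabla\phi^{(2\gamma-2)/(\delta-1)}+\nabla\phi^2\cdot Q(v)$, to close estimates in $L^\infty_tH^k_x\cap L^2_tH^{k+1}_x$ for $k=1,2,3$. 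Standard interior parabolic regularity for the uniformly elliptic operator $(\phi^2+\eta^2)L$ then yields $u\in L^2(0,T;D^4)$, so in particular $\phi\nabla^4 u\in L^2(0,T;L^2)$ since $\phi$ is bounded. The bound on $u_t$ in $C([0,T];H^1)\cap L^2(0,T;D^2)$ comes by solving $(\ref{li4})_2$ for $u_t$ and using the already-obtained spatial regularity; the improvement $u\in C([0,T];H^{s'})$, $s'\in[2,3)$, follows from Lemma \ref{aubin} (Aubin--Lions) applied to the Galerkin approximations combined with interpolation (Lemma \ref{gag111}). Uniqueness is routine: the difference of two solutions satisfies the same system with zero data and source, and a direct $L^2$ energy estimate, using the coercivity from $\eta^2>0$, forces it to vanish.

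The only slightly delicate point, compared with a textbook parabolic problem, is the presence of $\phi^2$ multiplying $Lu$: since $\phi^2$ has only $H^3$ spatial regularity and is merely bounded in time, one must take care when distributing derivatives in the $H^3$ estimate on $u$, so that every resulting factor can be controlled by Lemmas \ref{zhen1}--\ref{moser}. All such terms, however, are controlled by polynomial expressions in $\|\phi-\overline{\phi}\|_3$, $\|v\|_3$, $\|\psi-\overline{\phi}\|_3$, $\|\psi_t\|_2$, $\|v_t\|_1$, and $\|u\|_3$, and close a Gronwall inequality on $[0,T]$ whose constants are allowed (at this stage) to depend on $\eta$. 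Everything else is standard; the uniformity in $\eta$ that will eventually be needed is deferred to the a priori estimates of the next subsection.
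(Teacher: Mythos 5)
Your proposal follows exactly the route the paper takes: decouple the system, solve $(\ref{li4})_1$ by standard transport theory, and then treat $(\ref{li4})_2$ as a linear uniformly parabolic system thanks to the lower bound $\phi^2+\eta^2\geq\eta^2>0$, with all constants allowed to depend on $\eta$ at this stage. The paper merely states this in two sentences and omits the details, so your fleshed-out version (characteristics plus commutator/Gronwall estimates for $\phi$, Galerkin plus energy estimates and Aubin--Lions for $u$) is a correct implementation of the same argument.
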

\begin{proof} The first equation of (\ref{li4}) is a linear transport equation,
the existence and regularity of a unique solution $\phi$ in $\mathbb{R}^3 \times [0,T]$ to the first equation of (\ref{li4}) can be obtained by the standard theory of transport equation.

After knowing $\phi$ from $(\ref{li4})_1$, for $\eta>0$,   $(\ref{li4})_2$ becomes a  linear parabolic system of $u$. Then it is not difficult to solve $u$ by the standard parabolic theory.  Here we omit the details.
\end{proof}

In the following two subsections, we first  establish the uniform estimates independent of $\eta$, then we  pass to the limit $\eta\to 0$.

\subsection{Uniform a priori estimates  independent of $\eta$}
The main task of this subsection is to establish some local (in time) a priori estimates  for the solution  $(\phi, u)$ to (\ref{li4})  obtained in Lemma \ref{lem1}, independent of the artificial viscosity coefficient $\eta$. For this purpose, we fix a $T>0$ and choose a  positive constant $c_0$ large enough such  that
\begin{equation}\label{houmian}\begin{split}
2+\overline{\phi}+|\phi_0|_{\infty}+\|\phi_0-\overline{\phi}\|_{3}+\|u_0\|_{3}\leq c_0.
\end{split}
\end{equation}
We now assume that there exist some time $T^*\in (0,T)$ and constants $c_i$ ($i=1,2,3,4$) such that
$$1< c_0\leq c_1 \leq c_2 \leq c_3 \leq c_4,  $$
and
\begin{equation}\label{jizhu1}
\begin{split}
\sup_{0\leq t \leq T^*}\big(|\psi|^2_\infty+\| \psi(t)-\overline{\phi}\|^2_{3} +\| v(t)\|^2_{1}\big) \leq& c^2_1,\\
\sup_{0\leq t \leq T^*}\big(| \psi_t(t)|^2_{2} +|v(t)|^2_{D^2}+|v_t(t)|^2_{2}\big)+\int_{0}^{T^*} \Big( |\psi \nabla^3 v|^2_{2}+|v_t|^2_{D^1}\Big)\text{d}t \leq& c^2_2,\\
\text{ess}\sup_{0\leq t \leq T^*}\big(| \psi_t(t)|^2_{D^1}+|v(t)|^2_{D^3}+|v_t(t)|^2_{D^1}\big)+\int_{0}^{T^*} \Big( |\psi \nabla^4 v|^2_{2}+|v_t|^2_{D^2}\Big)\text{d}t
 \leq& c^2_3,\\
 \sup_{0\leq t \leq T^*}| \psi_t(t)|^2_{D^2} \leq& c^2_4,\\
\end{split}
\end{equation}
We shall determine $T^*$ and $c_i$ ($i=1,2,3,4$) later, see \eqref{dingyi} below,  so that they depend only on $c_0$ and the fixed constants $\overline{\rho}$, $\alpha$, $\beta$, $\gamma$, $A$, $\delta$ and $T$.

Let $(\phi,u)(x,t)$ be the unique strong solution to (\ref{li4}) in $\mathbb{R}^3 \times [0,T]$. We start from the estimates for $\phi$. Hereinafter, we use  $C\geq 1$ to denote  a generic positive constant depending only on fixed constants $\overline{\rho}$, $\alpha$, $\beta$,  $\gamma$, $A$, $\delta$ and $T$, which may vary each time when it appears.

\begin{lemma}\label{f2} Let $(\phi,u)(x,t)$ be the unique strong solution to (\ref{li4}) on $\mathbb{R}^3 \times [0,T]$. Then
\begin{equation*}\begin{split}
1+\overline{\phi}^2+|\phi(t)|^2_\infty+\|\phi(t)-\overline{\phi}\|^2_3\leq& Cc^2_0,\\
|\phi_t(t)|^2_2 \leq Cc^4_1,\quad |\phi_t(t)|^2_{D^1} \leq Cc^4_2,\quad |\phi_t(t)|^2_{D^2} \leq& Cc^4_3,
\end{split}
\end{equation*}
for $0\leq t \leq T_1=\min (T^*, (1+c_3)^{-2})$.
\end{lemma}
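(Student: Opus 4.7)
My plan is to treat $(\ref{li4})_1$ as a linear transport equation for $\phi-\overline{\phi}$ with a prescribed forcing $-\tfrac{\delta-1}{2}\psi\,\mathrm{div}\,v$, and run standard $H^3$ energy estimates while carefully tracking the dependence on the $c_i$. For each multi-index $\zeta$ with $|\zeta|\leq 3$, I would apply $\partial^\zeta_x$, pair against $\partial^\zeta_x(\phi-\overline{\phi})$, and integrate over $\mathbb{R}^3$. The transport term produces $\tfrac12\int(\mathrm{div}\,v)|\partial^\zeta(\phi-\overline{\phi})|^2$ plus a commutator $[\partial^\zeta, v\cdot\nabla](\phi-\overline{\phi})$; by Lemma \ref{zhen1} and the Sobolev embedding $H^2\hookrightarrow L^\infty$ in $\mathbb{R}^3$, both are controlled by $C\|v\|_3\|\phi-\overline{\phi}\|_3^2\leq Cc_3\|\phi-\overline{\phi}\|_3^2$. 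The forcing contributes $\|\psi\,\mathrm{div}\,v\|_3\,\|\phi-\overline{\phi}\|_3$.

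The subtle point is bounding $\|\psi\,\mathrm{div}\,v\|_3$. After Leibniz, every term with at most three derivatives falling on $v$ is bounded pointwise in time by $Cc_3^2$ using Lemma \ref{moser} together with the tier-$c_1,c_2,c_3$ bounds in $(\ref{jizhu1})$. The single genuinely dangerous term is $\psi\,\nabla^3\mathrm{div}\,v\sim\psi\nabla^4 v$, for which $(\ref{jizhu1})$ only provides an $L^2_tL^2_x$ bound of size $c_3$, not a pointwise-in-time bound. Thus one obtains the pointwise split $\|\psi\,\mathrm{div}\,v\|_3\leq |\psi\nabla^4 v|_2 + Cc_3^2$. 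Feeding this into Gronwall yields
\begin{equation*}
\|\phi(t)-\overline{\phi}\|_3 \leq e^{Cc_3 t}\Big(c_0 + \int_0^t|\psi\nabla^4 v|_2\,\mathrm{d}s + Cc_3^2 t\Big)\leq e^{Cc_3 t}\bigl(c_0+t^{1/2}c_3+Cc_3^2 t\bigr).
\end{equation*}
Restricting to $t\leq T_1=\min(T^*,(1+c_3)^{-2})$ makes $Cc_3 t\leq C$, $t^{1/2}c_3\leq 1$, and $Cc_3^2 t\leq C$, so the right-hand side is $\leq Cc_0$. The $L^\infty$ bound $|\phi|_\infty^2\leq 2\overline{\phi}^2+C\|\phi-\overline{\phi}\|_2^2\leq Cc_0^2$ is then immediate from Sobolev embedding and $(\ref{houmian})$.

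For the $\phi_t$ bounds, I would just read off $\phi_t=-v\cdot\nabla\phi-\tfrac{\delta-1}{2}\psi\,\mathrm{div}\,v$ and take the required spatial norm, using the $\phi$ bound just obtained. For $|\phi_t|_2$: H\"older gives $|v|_6|\nabla\phi|_3+C|\psi|_\infty|\nabla v|_2\leq Cc_1(Cc_0+c_1)\leq Cc_1^2$, where $|\nabla\phi|_3\leq C\|\phi-\overline{\phi}\|_3\leq Cc_0$. For $|\phi_t|_{D^1}$ and $|\phi_t|_{D^2}$, differentiate the equation once or twice and apply Lemma \ref{moser} together with the pointwise-in-time bounds on $\nabla^2 v, \nabla^3 v, \psi, \nabla\psi,\nabla^2\psi,\nabla^3\psi$ from $(\ref{jizhu1})$; this yields $|\phi_t|_{D^1}\leq Cc_2^2$ and $|\phi_t|_{D^2}\leq Cc_3^2$. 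Note that in these $\phi_t$ bounds only the pointwise tiers of $(\ref{jizhu1})$ are needed (e.g.\ $|v|_{D^3}\leq c_3$), never the $L^2_t$-only bound on $\psi\nabla^4 v$.

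The one real obstacle is the mismatch between the forcing's $L^2$-in-time regularity ($\psi\nabla^4 v\in L^2_tL^2_x$) and the pointwise-in-time $H^3$ target for $\phi$. Reconciling these by Cauchy--Schwarz, $\int_0^t|\psi\nabla^4 v|_2\,\mathrm{d}s\leq t^{1/2}c_3$, forces precisely the time scale $T_1\sim(1+c_3)^{-2}$; any longer time would not let us absorb the $c_3$ into an $O(c_0)$ bound. Everything else is bookkeeping via Moser/product estimates and Gronwall.
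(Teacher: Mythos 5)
Your proposal follows essentially the same route as the paper: an $H^3$ energy/commutator estimate on the transport equation for $\phi-\overline{\phi}$, isolating the $\psi\nabla^4 v$ term (which is only $L^2$ in time), closing via Gronwall and Cauchy--Schwarz in time on the interval $T_1=\min(T^*,(1+c_3)^{-2})$, and then reading the $\phi_t$ bounds directly off the equation using only the pointwise-in-time tiers of (\ref{jizhu1}). The argument and the tracking of the constants $c_i$ are correct.
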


\begin{proof}Let $\tilde{\phi}=\phi-\overline{\phi}$. Then from $(\ref{li4})_1$, we have
\begin{equation}\label{bian}
\tilde{\phi}_t+v\cdot \nabla \tilde{\phi}+\frac{\delta-1}{2}\psi \text{div}v=0.
\end{equation}
Applying the operator $\partial^\zeta_x$  $(0\leq |\zeta|\leq 3)$ to $(\ref{bian})$, we obtain
\begin{equation}\label{guji1}
(\partial^\zeta_x \tilde{\phi})_t+v\cdot \nabla \partial^\zeta_x \tilde{\phi}=-(\partial^\zeta_x (v\cdot \nabla \tilde{\phi})-v\cdot \nabla \partial^\zeta_x \tilde{\phi})-\frac{\delta-1}{2} \partial^\zeta_x (\psi \text{div}v).
\end{equation}
Multiplying both sides of (\ref{guji1}) by $\partial^\zeta_x \tilde{\phi}$, and integrating over $\mathbb{R}^3$, we get
\begin{equation}\label{guji2}
\frac{1}{2}\frac{d}{dt}|\partial^\zeta_x \tilde{\phi}|^2_2\leq C\left(|\text{div}v|_\infty |\partial^\zeta_x \tilde{\phi}|^2_2+\Lambda^\zeta_1 |\partial^\zeta_x \tilde{\phi}|_2+\Lambda^\zeta_2 |\partial^\zeta_x \tilde{\phi}|_2\right),
\end{equation}
where
$$
\Lambda^\zeta_1=|\partial^\zeta_x (v\cdot \nabla \tilde{\phi})-v\cdot \nabla \partial^\zeta_x \tilde{\phi}|_2,\quad \Lambda^\zeta_2=|\partial^\zeta_x (\psi \text{div}v)|_2.
$$
From Lemma  \ref{zhen1},  letting
$r=b=2$, $a=+\infty$, $f=v$, $g=\nabla \tilde{\phi}$
 in (\ref{ku11}), we obtain
\begin{equation}\label{qe1}
\begin{split}
\Lambda^\zeta_1
\leq C_\zeta\left(|\nabla v|_\infty|\nabla^{|\zeta|}\tilde{\phi}|_2+|\nabla \tilde{\phi}|_\infty|\partial^\zeta_x v|_2\right).
\end{split}
\end{equation}

Now we  consider the term $\Lambda^\zeta_2$.  When $|\zeta|\leq 2$, it is easy to show that
\begin{equation}\label{qe2}
\Lambda^\zeta_2\leq C_\zeta\big(|\psi|_\infty+\|\nabla\psi\|_2\big) \|v\|_3;
\end{equation}
when $|\zeta|=3$, we have
\begin{equation}\label{qe4}
\begin{split}
\Lambda^\zeta_2\leq& C\left(|\psi \nabla^4 v|_2+|\nabla \psi\cdot\nabla^3 v|_2+|\nabla^2 \psi\cdot\nabla^2 v|_2+|\nabla^3 \psi\cdot\nabla v|_2\right)\\
\leq & C\left(|\psi \nabla^4 v|_2+\|\nabla\psi\|_2 \|v\|_3\right).
\end{split}
\end{equation}
Combining (\ref{guji2})-(\ref{qe4}), we arrive at
\begin{equation}\label{qe5}
\begin{split}
\frac{d}{dt} \|\tilde{\phi}(t)\|_3\leq& C\left(\|v\|_3\|\tilde{\phi}\|_3+ (|\psi|_\infty+\|\nabla\psi\|_2)\|v\|_3+|\psi \nabla^4 v|_2\right)\\
\leq& C\left(c_3\|\tilde{\phi}\|_3+c^2_3+|\psi \nabla^4 v|_2\right).
\end{split}
\end{equation}
From Gronwall's inequality, we  have
\begin{equation}\label{gb}\begin{split}
\|\tilde{\phi}(t)\|_3
\leq& \Big(\|\phi_0-\overline{\phi}\|_{3}+c^2_3t+\int_0^t |\psi \nabla^4 v|_2\text{d}s\Big) \exp (Cc_3t).
\end{split}
\end{equation}

Therefore, observing that
$$
\int_0^t | \psi \nabla^4 v(s)|_{2}\text{d}s\leq t^{\frac{1}{2}}\left(\int_0^t |\psi \nabla^4 v(s)|^2_{2}\text{d}s\right)^{\frac{1}{2}}\leq Cc_3t^{\frac{1}{2}},
$$
we get
$$\|\phi(t)-\overline{\phi}\|_3 \leq C c_{0} \quad \text{for}\quad 0\leq t \leq T_1=\min (T^{*}, (1+c_3)^{-2}).$$

The estimate for $\phi_t$ follows  from the relation:
$$\phi_t=-v\cdot \nabla \phi-\frac{\delta-1}{2}\psi\text{div} v.$$
For $0\leq t \leq T_1$, the following estimates hold
\begin{equation}\label{zhen6}
\begin{split}
|\phi_t(t)|_2\leq & C\big(|v(t)|_6|\nabla \phi(t)|_3+|\psi(t)|_\infty|\text{div} v(t)|_2\big)\\
\leq & Cc^2_1,\\
|\phi_t(t)|_{D^1}\leq & C\big(|v(t)|_\infty|\nabla^2 \phi(t)|_{2}+|\nabla v(t)|_6|\nabla \phi(t)|_3+|\psi(t)|_\infty\nabla^2 v(t)|_{2}\\
&\quad+|\nabla v(t)|_6|\nabla \psi(t)|_3\big)\\
\leq &Cc^2_2,\\
|\phi_t(t)|_{D^2}\leq & C\big(|v(t)|_\infty|\nabla^3 \phi(t)|_{2}+|\nabla v(t)|_\infty |\nabla^2 \phi(t)|_2+|\nabla^2 v(t)|_6|\nabla \phi(t)|_3\\
&\quad+|\psi(t)|_\infty|\nabla^3 v(t)|_{2}+|\nabla \psi(t)|_\infty|\nabla^2 v(t)|_2+|\nabla^2\psi(t)|_2|\nabla v(t)|_{\infty}\big)\\
\leq& Cc^2_3.
\end{split}
\end{equation}
These complete the  proof.

\end{proof}

Now we turn to the  estimates for  the velocity $u$ in the following two lemmas. In view of \eqref{100001}, we define
$$
K=  \frac{4\gamma-4}{\delta-1}\geq 8.
$$
The following lemma gives some lower order estimates for the velocity $u$.
 \begin{lemma}\label{4}Let $(\phi,u)(x,t)$ be the unique strong solution to (\ref{li4}) on $\mathbb{R}^3 \times [0,T]$. Then
\begin{equation*}
\begin{split}
\|u(t)\|^2_{ 1}\leq& Cc^{2}_0,\quad |u(t)|^2_{ D^2}+|u_t(t)|^2_{2}+\int_{0}^{t}\Big(|\phi \nabla^3 u(s)|^2_{2}+|u_t(s)|^2_{D^1}\Big)\text{d}s\leq Cc^{2K}_1
\end{split}
\end{equation*}
for $0 \leq t \leq T_2=\min(T^*,(1+c_3)^{-2K})$.
 \end{lemma}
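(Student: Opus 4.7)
The plan is to derive a chain of weighted energy estimates on $u$ at successive regularity orders, tracking independence of the artificial viscosity $\eta$. I invoke Lemma \ref{f2} for uniform bounds on $\phi, \phi_t$, the a priori hypotheses \eqref{jizhu1} on $(\psi,v)$, and Lemmas \ref{zhen1}--\ref{moser} for the nonlinear commutators. The common template: testing $\nabla^k$ of the momentum equation $(\ref{li4})_2$ against $\nabla^k u$ produces, after integration by parts, the coercive integral $\alpha\!\int(\phi^2+\eta^2)|\nabla^{k+1}u|^2+(\alpha+\beta)\!\int(\phi^2+\eta^2)(\text{div}\,\nabla^k u)^2$, plus commutators with $\nabla^j\phi^2$ that are absorbable via Cauchy--Schwarz using the $\phi$-weighting of the coercive quantity.

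For the $H^1$ bound $\|u(t)\|_1^2\leq Cc_0^2$, I test against $u$ and separately against $-\triangle u$. The transport term contributes $\lesssim\|v\|_3\|u\|_1^2$, the pressure gradient $\int\nabla\phi^K\cdot u$ reshapes as $-\int(\phi^K-\overline\phi^K)\text{div}\,u$ bounded by Lemma \ref{f2}, and the source $\int\nabla\phi^2\cdot Q(v)\cdot u$ is controlled by H\"older and Sobolev. Gronwall delivers the $H^1$ bound with no amplification beyond $c_0^2$.

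For $|u(t)|_{D^2}^2+|u_t(t)|_2^2+\int_0^t|\phi\nabla^3 u|_2^2\,ds\leq Cc_1^{2K}$, I apply $\nabla^2$ and test against $\nabla^2 u$. The crucial decomposition $\nabla^2[(\phi^2+\eta^2)Lu]=(\phi^2+\eta^2)L(\nabla^2 u)+\text{commutators}$, the latter involving $\nabla^j\phi^2\cdot\nabla^{3-j}Lu$ for $j=1,2$, produces the coercive $\alpha\int(\phi^2+\eta^2)|\nabla^3 u|^2$; its time integral yields both $\int_0^t|\phi\nabla^3 u|_2^2\,ds$ and $\eta^2\int_0^t|\nabla^3 u|_2^2\,ds$ uniformly in $\eta$. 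Commutators are absorbed by pairings such as $\int 2\phi\nabla\phi\cdot\nabla Lu\cdot\nabla^2 u\leq\varepsilon|\phi\nabla^3 u|_2^2+C(\varepsilon)|\nabla\phi|_\infty^2|\nabla^2 u|_2^2$, which match the $\phi$-weight in the coercive term. The transport and pressure commutators grow as $c_1^{2K}$ through Gronwall because of the $\phi^K$ pressure factor. Then $|u_t|_2$ is read directly off the equation: $|u_t|_2\leq |F_1|_2+|v|_\infty|\nabla u|_2+(|\phi|_\infty^2+\eta^2)|Lu|_2\leq Cc_1^K$.

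The final piece $\int_0^t|\nabla u_t|_2^2\,ds\leq Cc_1^{2K}$ follows from the pointwise identity $\nabla u_t=\nabla F_1-\nabla(v\cdot\nabla u)-2\phi\nabla\phi\cdot Lu-(\phi^2+\eta^2)\nabla Lu$. The first three summands are $L^2(t)$-bounded by $Cc_1^K$. For the last, the key estimate is $|(\phi^2+\eta^2)\nabla Lu|_2^2\leq(|\phi|_\infty^2+\eta^2)\!\int(\phi^2+\eta^2)|\nabla Lu|^2\lesssim c_0^2\!\int(\phi^2+\eta^2)|\nabla^3 u|^2$, so that integrating in time and invoking the weighted bound from the previous step delivers $\int_0^t|(\phi^2+\eta^2)\nabla Lu|_2^2\,ds\leq Cc_1^{2K}$ uniformly in $\eta$. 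The main obstacle is precisely this degeneracy $\phi\to 0$: the natural energy testing yields only weighted bounds $|(\phi+\eta)\nabla^k u|_2$, whereas the statement demands $\eta$-uniform control on the unweighted $|u|_{D^2}$ and $|\nabla u_t|_2$. Closing this gap requires combining the coercive weighted estimates with the pointwise structure of the equation, exploiting that the hyperbolic transport $u_t+v\cdot\nabla u$ dominates wherever $\phi$ is small --- a feature enabled by the linearization choice (keeping $u_t+v\cdot\nabla u$ rather than $u_t+v\cdot\nabla v$) explained after (\ref{vg}). The exponent $2K$ and the time scale $T_2=\min(T^*,(1+c_3)^{-2K})$ both stem from the $\phi^K$ pressure factor propagating through Gronwall.
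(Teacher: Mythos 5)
Your proposal is correct and follows essentially the same route as the paper: differentiate the momentum equation up to second order, test against $\partial_x^\zeta u$ to extract the coercive weighted term $\alpha\int(\phi^2+\eta^2)|\nabla\partial_x^\zeta u|^2$ (which absorbs the viscosity commutators and supplies $\int_0^t|\phi\nabla^3u|_2^2$), control the unweighted norms through the antisymmetric transport $v\cdot\nabla u$ and Gronwall on the time scale $T_2$, and then read $|u_t|_2$ and $|u_t|_{D^1}$ off the equation, using $|(\phi^2+\eta^2)\nabla Lu|_2\lesssim c_0|\sqrt{\phi^2+\eta^2}\nabla^3u|_2$ for the $L^2_t D^1$ bound on $u_t$. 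The only differences are cosmetic (testing against $-\triangle u$ rather than the differentiated equation against $\nabla u$, and attributing the $c_1^{2K}$ growth to the $D^2$ estimate rather than, as in the paper, to the $|u_t|_2^2$ term).
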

\begin{proof} The proof will be carried out in two steps.\\

\noindent\underline{Step\ 1}. We estimate $|\partial^\zeta_x u|_2$ when $|\zeta|\leq 2$.  Taking  $\partial^\zeta_x$ on equation $(\ref{li4})_2$, we have
\begin{equation}\label{zhull1}
\begin{split}
&(\partial^\zeta_x u)_t+v\cdot \nabla (\partial^\zeta_x u)+(\phi^2+\eta^2) L\partial^\zeta_x u \\
=&-\frac{A\gamma}{\gamma-1}\partial^\zeta_x\nabla \phi^{\frac{2\gamma-2}{\delta-1}}+\nabla \phi^2 \cdot \partial^\zeta_x Q(v)- \left(\partial^\zeta_x(v\cdot \nabla u)-v\cdot \nabla (\partial^\zeta_x u)\right)\\
&- \Big(\partial^\zeta_x((\phi^2+\eta^2) Lu)-(\phi^2+\eta^2) L\partial^\zeta_x u \Big)+ \Big(\partial^\zeta_x(\nabla \phi^2  \cdot Q(v))-\nabla \phi^2  \cdot \partial^\zeta_x Q(v)\Big).
\end{split}
\end{equation}
Multiplying (\ref{zhull1}) by $\partial^\zeta_x u$ on both sides   and integrating  over $\mathbb{R}^3$ by parts,  we have
\begin{equation}\label{zhu101}
\begin{split}
&\frac{1}{2} \frac{d}{dt}|\partial^\zeta_x u|^2_2+\alpha| \sqrt{\phi^2+\eta^2} \nabla (\partial^\zeta_x u) |^2_2+(\alpha+\beta)|\sqrt{\phi^2+\eta^2} \text{div} \partial^\zeta_x u |^2_2\\[6pt]
\displaystyle
=& -\int (v\cdot \nabla (\partial^\zeta_x u))\cdot \partial^\zeta_x u-\frac{\delta-1}{\delta}\int \Big(\nabla (\phi^2+\eta^2) \cdot Q(\partial^\zeta_x u)\Big) \cdot \partial^\zeta_x u\\
&-\frac{A\gamma}{\gamma-1}\int \partial^\zeta_x\nabla \phi^{\frac{2\gamma-2}{\delta-1}}\cdot \partial^\zeta_x u+\int  \nabla \phi^2\cdot  \partial^\zeta_x Q(v) \cdot \partial^\zeta_x u\\
&
 -\int \Big(\partial^\zeta_x(v\cdot \nabla u)-v\cdot \nabla (\partial^\zeta_x u)\Big)\cdot \partial^\zeta_x u\\
&-\int \Big( \partial^\zeta_x ((\phi^2+\eta^2) Lu)-(\phi^2+\eta^2) L\partial^\zeta_x u \Big)\cdot \partial^\zeta_x u\\
&+\int  \Big(\partial^\zeta_x(\nabla \phi^2  \cdot Q(v))-\nabla \phi^2  \cdot \partial^\zeta_x Q(v)\Big)\cdot \partial^\zeta_x u\\
=:&\sum_{i=1}^{7} I_i.
\end{split}
\end{equation}
Now we estimate the last part of (\ref{zhu101}) term by term. Using  H\"older's inequality, Lemma \ref{lem2as} and Young's inequality, we have

\begin{equation}\label{zhu102}
\begin{split}
I_1=& -\int (v\cdot \nabla (\partial^\zeta_x u))\cdot \partial^\zeta_x u\\
\leq& C|\nabla v|_\infty |\partial^\zeta_x u|^2_2\leq Cc_3 |\partial^\zeta_x u|^2_2,\\
I_2=&-\frac{\delta-1}{\delta}\int \Big(\nabla (\phi^2+\eta^2) \cdot Q(\partial^\zeta_x u)\Big) \cdot \partial^\zeta_x u\\
\leq &C|\phi \nabla (\partial^\zeta_x u) |_2|\nabla \phi|_\infty |\partial^\zeta_x u |_2\\
\leq &  C|\sqrt{\phi^2+\eta^2} \nabla (\partial^\zeta_x u) |_2|\nabla \phi |_\infty|\partial^\zeta_x u |_2\\
\leq & \frac{\alpha}{20}|\sqrt{\phi^2+\eta^2} \nabla (\partial^\zeta_x u) |^2_2+Cc^2_0 |\partial^\zeta_x u|^2_2.
\end{split}
\end{equation}

For the term $I_3$, it is obvious that
\begin{equation}\label{zhu1022}
\begin{split}
I_3=&-\frac{A\gamma}{\gamma-1}\int \nabla \partial^\zeta_x\phi^{\frac{2\gamma-2}{\delta-1}}\cdot \partial^\zeta_x u
=\frac{A\gamma}{\gamma-1}\int  \partial^\zeta_x \phi^{\frac{2\gamma-2}{\delta-1}}\text{div} \partial^\zeta_x u.
\end{split}
\end{equation}
Since  $1< \delta \leq \frac{\gamma+1}{2}$,
$\frac{2\gamma-2}{\delta-1}\geq 4$.
Then for $|\zeta|=0$, one has
\begin{equation}\label{zhu1022as}
\begin{split}
I_3=&-\frac{2A\gamma}{\delta-1}\int \phi^{\frac{2\gamma-\delta-1}{\delta-1}}\nabla \phi\cdot u\\
\leq& C|\phi|^{\frac{2\gamma-\delta-1}{\delta-1}}_\infty|\nabla \phi|_2| u|_2\\
\leq & C|\phi|^{\frac{4\gamma-2\delta-2}{\delta-1}}_\infty|\nabla \phi|^2_2+C| u|^2_2\\
\leq & Cc^{\frac{4\gamma-4}{\delta-1}}_0+C| u|^2_2.
\end{split}
\end{equation}
When $1\leq |\zeta|\leq 2$, we have
\begin{equation}\label{zhu103}
\begin{split}
I_3=&\frac{A\gamma}{\gamma-1}\int  \partial^\zeta_x \phi^{\frac{2\gamma-2}{\delta-1}}\text{div} \partial^\zeta_x u\\
\leq& C\Big(|\phi|^{\frac{2\gamma-2}{\delta-1}-2}_\infty|\nabla \phi|_2|\phi \text{div}\partial^\zeta_x u|_2+|\phi|^{\frac{2\gamma-2}{\delta-1}-2}_\infty|\nabla^2 \phi|_2|\phi \text{div}\partial^\zeta_x u|_2\\
&\quad+|\phi|^{\frac{2\gamma-2}{\delta-1}-3}_\infty|\nabla \phi|_6|\nabla \phi|_3|\phi \text{div}\partial^\zeta_x u|_2\Big)\\
\leq& C\Big(|\phi|^{\frac{2\gamma-2}{\delta-1}-2}_\infty\|\nabla \phi\|_2+|\phi|^{\frac{2\gamma-2}{\delta-1}-3}_\infty|\nabla \phi|_6|\nabla \phi|_3\Big) |\phi\nabla (\partial^\zeta_x u)|_2\\
\leq& Cc^{\frac{2\gamma-\delta-1}{\delta-1}}_0|\phi\nabla (\partial^\zeta_x u)|_2\\
\leq& Cc^{\frac{4\gamma-2\delta-2}{\delta-1}}_0+\frac{\alpha}{20}|\sqrt{\phi^2+\eta^2}\nabla (\partial^\zeta_x u)|^2_2,
\end{split}
\end{equation}

which, combining with (\ref{zhu1022as}) and $\delta >1$, implies that
\begin{equation}\label{zhu1055}
\begin{split}
I_3\leq \frac{\alpha}{10}|\sqrt{\phi^2+\eta^2} \nabla (\partial^\zeta_x u) |^2_2+C|u|^2_2+Cc^{\frac{4\gamma-4}{\delta-1}}_0,\quad \text{for}\quad  |\zeta|\leq 2.
\end{split}
\end{equation}

For the term $I_4$, it is not difficult to show that
\begin{equation}\label{zhu106}
\begin{split}
I_4=&\int  \nabla \phi^2\cdot  \partial^\zeta_x Q(v) \cdot \partial^\zeta_x u\\
\leq & C|\phi|_\infty|\nabla \phi|_\infty \|\nabla v\|_2 | \partial^\zeta_x u|_2
\leq Cc^3_3 |\partial^\zeta_x u |_2,\quad \text{for}\quad   |\zeta|\leq  2.
\end{split}
\end{equation}

For the term $I_5$, we have
\begin{equation}\label{zhu10606}
\begin{split}
I_5=&-\int \Big(\partial^\zeta_x(v\cdot \nabla u)-v\cdot \nabla (\partial^\zeta_x u)\Big)\cdot \partial^\zeta_x u\\
\leq & | \partial^\zeta_x(v\cdot \nabla u)-v\cdot \nabla (\partial^\zeta_x u)|_2|\partial^\zeta_x u|_2\\
\leq &C(|\nabla v|_\infty \|\nabla  u\|_1+|\partial^\zeta_x v|_6|\nabla u|_3)|\partial^\zeta_x u|_2\\
\leq & Cc_3\| u\|^2_2+Cc_3|\nabla u|_3|\partial^\zeta_x u|_2\leq Cc_3\| u\|^2_2, \quad \text{for}\quad  1\leq |\zeta|\leq  2.
\end{split}
\end{equation}

For the term $I_6$,  when $ |\zeta|= 1$ we have,
\begin{equation}\label{zhu107}
\begin{split}
I_6=&-\int  \Big(\partial^\zeta_x((\phi^2+\eta^2) Lu)-(\phi^2+\eta^2) L\partial^\zeta_x u \Big)\cdot \partial^\zeta_x u\\
\leq &C|\partial^\zeta_x((\phi^2+\eta^2) Lu)-(\phi^2+\eta^2) L\partial^\zeta_x u |_2 |\partial^\zeta_x u|_2\\
\leq &C|\nabla (\phi^2+\eta^2)|_\infty |L u|_2 |\partial^\zeta_x u|_2\\
\leq& C|\phi|_\infty |\nabla \phi|_\infty|u|_{D^1}|u|_{D^2}
\leq Cc^2_0\|\nabla u\|^2_{1}.
\end{split}
\end{equation}
When $|\zeta|=2$, we have
\begin{equation}\label{zhu109}
\begin{split}
I_6=&-\int  \Big(\partial^\zeta_x((\phi^2+\eta^2) Lu)-(\phi^2+\eta^2) L\partial^\zeta_x u\Big)\cdot \partial^\zeta_x u\\
\leq &C\int  \Big(|\nabla^2\phi||\phi Lu|+|\nabla \phi|^2|Lu|+|\phi \nabla Lu||\nabla \phi|\Big)|\partial^\zeta_x u|\\
\leq &C\Big(  |\nabla ^2 \phi|_3 |\phi\nabla ^2 u|_6 |\nabla ^2 u|_2+|\nabla \phi|^2_\infty|u|^2_{D^2}+|\nabla \phi|_\infty|\phi\nabla ^3 u|_2 |\nabla ^2 u|_2\Big)\\
\leq&  C\|\nabla \phi\|^2_2 |u|^2_{D^2}+\frac{\alpha}{20} |\phi \nabla^3 u|^2_2\\
\leq & Cc^2_0|u|^2_{D^2}+\frac{\alpha}{20} |\sqrt{\phi^2+\eta^2} \nabla^3 u|^2_2,
\end{split}
\end{equation}
where we used the fact that
\begin{equation}\label{quandeng}
|\phi\nabla ^2 u|_6\leq C|\phi \nabla ^2 u|_{D^1}\leq C(|\nabla \phi|_\infty|\nabla^2 u|_2+|\phi \nabla^3 u|_2).
\end{equation}
Combining (\ref{zhu107})-(\ref{zhu109}), we then have
\begin{equation}\label{zhu10111}
\begin{split}
I_6
\leq &\frac{\alpha}{20}|\sqrt{\phi^2+\eta^2}\nabla^3 u|^2_2+ Cc^2_0 \|\nabla u\|^2_{1}, \quad \text{for}\quad  1\leq |\zeta|\leq  2.
\end{split}
\end{equation}

Term $I_7$ can be controlled as follows,
\begin{equation}\label{zhu1012}
\begin{split}
I_7=&\int  \Big(\partial^\zeta_x(\nabla \phi^2  \cdot Q(v))-\nabla \phi^2 \cdot \partial^\zeta_x Q(v)\Big)\cdot \partial^\zeta_x u\\
\leq &|\partial^\zeta_x(\nabla \phi^2  Q(v))-\nabla \phi^2 \partial^\zeta_x Q(v)|_2|\partial^\zeta_x u|_2.
\end{split}
\end{equation}
When $|\zeta|=1$,
\begin{equation}\label{zhu1013}
\begin{split}
&|\partial^\zeta_x(\nabla \phi^2  \cdot Q(v))-\nabla \phi^2 \cdot \partial^\zeta_x Q(v)|_2\\
\leq &C(|\nabla \phi|^2_\infty |\nabla  v |_2+|\phi|_\infty|\nabla^2 \phi|_2 |\nabla v|_\infty\big)\leq Cc^3_3;
\end{split}
\end{equation}
when $|\zeta|=2$,
\begin{equation}\label{zhu1013rt}
\begin{split}
&|\partial^\zeta_x(\nabla \phi^2  \cdot Q(v))-\nabla \phi^2 \cdot \partial^\zeta_x Q(v)|_2\\
\leq &C\Big(|\nabla \phi|^2_\infty |\nabla^2 v |_2+|\phi|_\infty|\nabla^2 \phi|_6 |\nabla^2 v|_3\\
&+|\phi|_\infty |\nabla^3 \phi |_2|\nabla v|_\infty+|\nabla \phi|_\infty|\nabla^2 \phi|_2 |\nabla v|_\infty\Big)\\
\leq& Cc^3_3,
\end{split}
\end{equation}
which, combining with (\ref{zhu1012}), implies that
\begin{equation}\label{zhu1014}
\begin{split}
I_7
\leq& Cc^3_3|\partial^\zeta_x u|_2, \quad \text{for}\quad  1\leq |\zeta|\leq  2.
\end{split}
\end{equation}

Therefore, from  (\ref{zhu101})-(\ref{zhu102}), (\ref{zhu1055})-(\ref{zhu10606}), (\ref{zhu10111}) and (\ref{zhu1014}), we deduce that
\begin{equation}\label{zhu1021}
\begin{split}
&\frac{d}{dt}\|u\|^2_2+|\sqrt{\phi^2+\eta^2} \nabla^3 u|^2_2
\leq  Cc^{2}_3 \|u\|^2_{2}+Cc^{K}_3.
\end{split}
\end{equation}

Now Gronwall's inequality implies that
\begin{equation}\label{zhu10222-1}
\begin{split}
\|u(t)\|^2_2+\int_0^t |\sqrt{\phi^2+\eta^2} \nabla^3 u|^2_2\text{d}s  \leq  \big(Cc^{K}_3t+\|u_0\|^2_2\big)\exp (Cc^{2}_3 t)\leq Cc^2_0,
\end{split}
\end{equation}
for $0\leq t\leq T_2=\min \big(T^*,(1+c_3)^{-2K})$.\\

\noindent\underline{Step 2}. We estimate $|\partial^\zeta_x u_t|_2$ when $|\zeta|\leq 1$.
From the momentum equations $(\ref{li4})_2$, we  have
\begin{equation}\label{zhu1019ss}
\begin{split}
|u_t|_2=&\Big|v\cdot\nabla u+\frac{A\gamma}{\gamma-1}\nabla \phi^{\frac{2\gamma-2}{\delta-1}}+(\phi^2+\eta^2) Lu-\nabla \phi^2 \cdot  Q(v)\Big|_2\\
\leq &C\Big(|v|_6 |\nabla u|_3+|\phi|^{\frac{2\gamma-2}{\delta-1}-1}_\infty|\nabla\phi|_2 +|\phi^2+\eta^2|_\infty|u|_{D^2}+|\phi|_\infty|\nabla\phi|_\infty|\nabla v|_2\Big)\\
\leq& Cc^{K}_1.
\end{split}
\end{equation}
Similarly, for $|u_t|_{D^1}$, we  have
\begin{equation}\label{zhu1020ss}
\begin{split}
|u_t|_{D^1}=& \Big|v\cdot\nabla u+\frac{A\gamma}{\gamma-1}\nabla \phi^{\frac{2\gamma-2}{\delta-1}}+(\phi^2+\eta^2) Lu-\nabla \phi^2 \cdot Q(v)\Big|_{D^1}\\
\leq & C\Big(\|v\|_2\|\nabla u\|_1+|\phi|^{\frac{2\gamma-2}{\delta-1}-1}_\infty|\nabla^2\phi|_2+|\phi|^{\frac{2\gamma-2}{\delta-1}-2}_\infty|\nabla\phi|_2|\nabla\phi|_\infty\\
&\quad+|\sqrt{\phi^2+\eta^2}|_\infty|\sqrt{\phi^2+\eta^2} \nabla^3 u|_2+|\phi|_\infty|\nabla \phi|_\infty|u|_{D^2}\\
&\quad+(|\phi|^2_\infty+\|\nabla\phi\|^2_2)\|\nabla v\|_1\Big)\\
\leq & C\Big(c^{K}_2+c_0|\sqrt{\phi^2+\eta^2} \nabla^3 u|_2\Big),
\end{split}
\end{equation}
which  implies that
$$
\int_{0}^t |u_t|^2_{D^1} \text{d}s \leq C\int_0^t \big(c^{2K}_2+c^2_0|\sqrt{\phi^2+\eta^2} \nabla^3 u|^2_2\big) \text{d}s\leq Cc^{4}_0,
$$
for $\ 0\leq t \leq T_2=\min(T^*,(1+c_3)^{-2K})$.
\end{proof}

Finally, we  give the higher order estimates for  velocity $u$ such as $|u|_{L^\infty([0,T];D^3)}$ and $|\phi \nabla^4 u|_{L^2([0,T]; L^2)}$ through  an accurate analysis on the artificial viscosity $(\phi^2+\eta^2)Lu$.
 \begin{lemma}\label{s5}Let $(\phi,u)(x,t)$ be the unique strong solution to (\ref{li4}) on $\mathbb{R}^3 \times [0,T]$. Then
\begin{equation*}
\begin{split}
|u(t)|^2_{ D^3}+|u_t(t)|^2_{D^1}+\int_{0}^{t}\Big(|\phi \nabla^4 u(s)|^2_{2}+|u_t(s)|^2_{D^2}\Big)\text{d}s\leq Cc^{2K}_2
\end{split}
\end{equation*}
for $0 \leq t \leq T_3=\min(T^*,(1+c_3)^{-2K-4})$.
 \end{lemma}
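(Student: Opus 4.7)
The plan is to carry out the same energy method as in Lemma~\ref{4}, but one derivative higher. Applying $\partial^\zeta_x$ with $|\zeta|=3$ to $(\ref{li4})_2$, taking the $L^2$-inner product with $\partial^\zeta_x u$, and integrating by parts on the viscosity term, I would obtain
\begin{equation*}
\frac{1}{2}\frac{d}{dt}|\nabla^3 u|^2_2+\alpha|\sqrt{\phi^2+\eta^2}\,\nabla^4 u|^2_2+(\alpha+\beta)|\sqrt{\phi^2+\eta^2}\,\mathtt{div}\,\nabla^3 u|^2_2=\sum_{i=1}^{7}\widetilde{I}_i,
\end{equation*}
where $\widetilde{I}_1,\dots,\widetilde{I}_7$ are the exact analogues of $I_1,\dots,I_7$ from $(\ref{zhu101})$. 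The goal is to control each $\widetilde{I}_i$ in terms of the quantities already bounded in Lemmas~\ref{f2}--\ref{4} and the assumptions $(\ref{jizhu1})$, with every occurrence of the top derivative $\nabla^4 u$ coupled to an extra factor of $\phi$ and absorbed into the degenerate viscosity term via Cauchy--Schwarz.

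The delicate terms are $\widetilde{I}_3$, $\widetilde{I}_6$ and $\widetilde{I}_7$. For the pressure term $\widetilde{I}_3$, the chain-rule expansion of $\partial^\zeta_x\phi^{K/2}$ gives a leading piece $\phi^{K/2-1}\nabla^3\phi$; integration by parts followed by Cauchy--Schwarz in the spirit of $(\ref{zhu103})$ yields $Cc_0^{K}+\frac{\alpha}{20}|\sqrt{\phi^2+\eta^2}\,\nabla^4 u|^2_2$, plus lower-order multi-indexed pieces controlled by $\|\phi-\overline{\phi}\|_3\le Cc_0$ from Lemma~\ref{f2}. For the viscosity commutator $\widetilde{I}_6$, expanding $\partial^\zeta_x((\phi^2+\eta^2)Lu)-(\phi^2+\eta^2)L\partial^\zeta_x u$ with $|\zeta|=3$ produces terms with one, two, or three derivatives landing on $\phi^2$; the worst, of schematic form $\nabla\phi^2\cdot\nabla^2 Lu$, is bounded by $|\nabla\phi|_\infty|\phi\,\nabla^4 u|_2|\nabla^3 u|_2$ and split via Cauchy. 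The subleading pieces are handled using the interpolation identity $|\phi\,\nabla^3 u|_6\le C(|\nabla\phi|_\infty|\nabla^3 u|_2+|\phi\,\nabla^4 u|_2)$ analogous to $(\ref{quandeng})$, together with $\|\phi-\overline{\phi}\|_3\le Cc_0$. The commutator $\widetilde{I}_7$ contributes $|\nabla^3\phi|_2|\nabla v|_\infty$- and $|\nabla\phi|_\infty|\psi\,\nabla^4 v|_2$-type factors, all absorbable into polynomial combinations of $c_3$ using $(\ref{jizhu1})$. The remaining $\widetilde{I}_1,\widetilde{I}_2,\widetilde{I}_4,\widetilde{I}_5$ are handled in close analogy with Lemma~\ref{4}.

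Collecting everything yields a differential inequality of the form
\begin{equation*}
\frac{d}{dt}|\nabla^3 u|^2_2+|\sqrt{\phi^2+\eta^2}\,\nabla^4 u|^2_2\le Cc_3^{2}|\nabla^3 u|^2_2+Cc_3^{2K},
\end{equation*}
so that Gronwall on $[0,T_3]$ with $T_3=\min(T^*,(1+c_3)^{-2K-4})$ produces the desired $|u(t)|^2_{D^3}+\int_0^t|\phi\,\nabla^4 u|^2_2\,ds\le Cc_2^{2K}$. The $u_t$ estimates are then read off directly from $(\ref{li4})_2$ in the spirit of $(\ref{zhu1019ss})$--$(\ref{zhu1020ss})$: one spatial derivative on the momentum equation produces $\phi^2\,\nabla^3 u$ as the only top-order contribution in $\nabla u_t$, controlled pointwise in $t$ by the $D^3$ bound just obtained together with the pressure powers of $|\phi|_\infty$; two spatial derivatives produce $\phi^2\,\nabla^4 u$ in $\nabla^2 u_t$, which is square-integrable in time thanks to the weighted integral bound $\int_0^t|\phi\,\nabla^4 u|^2_2\,ds\le Cc_2^{2K}$.

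The main obstacle is to ensure that every appearance of $\nabla^4 u$ is paired with a factor of $\phi$, not merely $\sqrt{\phi^2+\eta^2}$, so that the resulting bound is uniform in $\eta$ and survives the vanishing viscosity limit in the next subsection. This forces careful bookkeeping of each derivative that lands on $\phi^2$ inside the commutator $\widetilde{I}_6$, because the degenerate ellipticity furnished by $|\phi\,\nabla^4 u|_2$ is strictly weaker than what a uniformly elliptic setting would give; this is precisely why $T_3$ must shrink as $(1+c_3)^{-2K-4}$ rather than as $(1+c_3)^{-2K}$.
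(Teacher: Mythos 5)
Your proposal reproduces the paper's argument essentially verbatim: the same $|\zeta|=3$ energy identity with the seven right-hand terms, the same treatment of the pressure term and the viscosity commutator via the interpolation bound $|\phi\nabla^3u|_6\le C(|\nabla\phi|_\infty|\nabla^3u|_2+|\phi\nabla^4u|_2)$, the same Gronwall closing on $T_3=\min(T^*,(1+c_3)^{-2K-4})$, and the same reading of the $u_t$ bounds directly from the momentum equation. The only place your sketch is thinner than the paper is the commutator $\partial^\zeta_x(\nabla\phi^2\cdot Q(v))-\nabla\phi^2\cdot\partial^\zeta_x Q(v)$, whose leading piece $\nabla\partial^\zeta_x\phi^2\cdot Q(v)$ contains four derivatives of $\phi$ (not controlled by $\|\phi-\overline{\phi}\|_3$) and must first be integrated by parts onto $\partial^\zeta_x u$, producing $\phi\nabla^4u$-weighted terms that are then absorbed into the viscosity--- precisely the bookkeeping you flag at the end, so the plan is sound.
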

\begin{proof}The proof is divided into two steps.\\

\noindent\underline{Step\ 1}. The estimate of $|\partial^\zeta_x u|_2$ for $|\zeta|=3$.
Multiplying (\ref{zhull1}) by $\partial^\zeta_x u$ on both sides   and integrating  over $\mathbb{R}^3$ by parts,  we get
\begin{equation}\label{szhu101}
\begin{split}
&\frac{1}{2} \frac{d}{dt}|\partial^\zeta_x u|^2_2+\alpha| \sqrt{\phi^2+\eta^2} \nabla (\partial^\zeta_x u) |^2_2+(\alpha+\beta)|\sqrt{\phi^2+\eta^2} \text{div} \partial^\zeta_x u |^2_2\\[6pt]
\displaystyle
=& -\int (v\cdot \nabla (\partial^\zeta_x u))\cdot \partial^\zeta_x u-\frac{\delta-1}{\delta}\int (\nabla (\phi^2+\eta^2) \cdot Q(\partial^\zeta_x u)) \cdot \partial^\zeta_x u\\
&-\frac{A\gamma}{\gamma-1}\int \partial^\zeta_x\nabla \phi^{\frac{2\gamma-2}{\delta-1}}\cdot \partial^\zeta_x u+\int  \nabla \phi^2\cdot  \partial^\zeta_x Q(v) \cdot \partial^\zeta_x u\\
&
 -\int \Big(\partial^\zeta_x(v\cdot \nabla u)-v\cdot \nabla (\partial^\zeta_x u)\Big)\cdot \partial^\zeta_x u\\
&-\int\Big(\partial^\zeta_x ((\phi^2+\eta^2) Lu)-(\phi^2+\eta^2) L\partial^\zeta_x u \Big)\cdot \partial^\zeta_x u\\
&+\int  \Big(\partial^\zeta_x(\nabla \phi^2  \cdot Q(v))-\nabla \phi^2\cdot \partial^\zeta_x Q(v)\Big)\cdot \partial^\zeta_x u\\
=:&\sum_{i=8}^{14} I_i,
\end{split}
\end{equation}

Now we estimate $I_i$ $(i=8,\cdots, 14)$ term by term. Similarly to the derivation of (\ref{zhu102}), the first
two terms can be treated as follows,
\begin{equation}\label{szhu102}
\begin{split}
I_8=& -\int (v\cdot \nabla (\partial^\zeta_x u))\cdot \partial^\zeta_x u \leq Cc_3 |\partial^\zeta_x u|^2_2,\\
 I_{9}=&-\frac{\delta-1}{\delta}\int (\nabla (\phi^2+\eta^2) \cdot Q(\partial^\zeta_x u)) \cdot \partial^\zeta_x u\leq   \frac{\alpha}{20}|\sqrt{\phi^2+\eta^2} \nabla (\partial^\zeta_x u) |^2_2+Cc^2_0|\partial^\zeta_x u|^2_2.
\end{split}
\end{equation}

For the term $I_{10}$, when $|\zeta|=3$, one has
\begin{equation}\label{szhu1022}
\begin{split}
I_{10}=&-\frac{A\gamma}{\gamma-1}\int \nabla \partial^\zeta_x\phi^{\frac{2\gamma-2}{\delta-1}}\cdot \partial^\zeta_x u
=\frac{A\gamma}{\gamma-1}\int  \partial^\zeta_x \phi^{\frac{2\gamma-2}{\delta-1}}\text{div} \partial^\zeta_x u\\
\leq& C\int \Big(|\phi|^{\frac{2\gamma-2}{\delta-1}-4}|\nabla \phi|^3+|\phi|^{\frac{2\gamma-2}{\delta-1}-3} |\nabla \phi||\nabla^2\phi|+|\phi|^{\frac{2\gamma-2}{\delta-1}-2}|\nabla^3 \phi|\Big)|\phi \text{div} \partial^\zeta_x u|\\
\leq & C\Big(|\nabla \phi|_2 |\nabla \phi|^2_\infty |\phi|^{\frac{2\gamma-2}{\delta-1}-4}_\infty|\phi \nabla (\partial^\zeta_x u)|_2+|\nabla^2 \phi|_2 |\nabla \phi|_\infty |\phi|^{\frac{2\gamma-2}{\delta-1}-3}_\infty|\phi \nabla (\partial^\zeta_x u)|_2\\
&\quad+|\nabla^3 \phi|_2 |\phi|^{\frac{2\gamma-2}{\delta-1}-2}_\infty|\phi \nabla (\partial^\zeta_x u)|_2\Big)\\
\leq &C\Big(|\nabla \phi|^2_2 |\nabla \phi|^4_\infty |\phi|^{\frac{4\gamma-4}{\delta-1}-8}_\infty+|\nabla^2 \phi|^2_2 |\nabla \phi|^2_\infty |\phi|^{\frac{4\gamma-4}{\delta-1}-6}_\infty+|\nabla^3 \phi|^2_2 |\phi|^{\frac{4\gamma-4}{\delta-1}-4}_\infty\Big)\\
&\quad +\frac{\alpha}{20}|\phi \nabla (\partial^\zeta_x u)|^2_2\\
\leq& Cc^{\frac{4\gamma-2\delta-2}{\delta-1}}_0+\frac{\alpha}{20}|\sqrt{\phi^2+\eta^2}\nabla (\partial^\zeta_x u)|^2_2.
\end{split}
\end{equation}

For the term $I_{11}$, from integration by parts, we  have
\begin{equation}\label{szhu106}
\begin{split}
I_{11}=&\int  \nabla \phi^2\cdot  \partial^\zeta_x Q(v) \cdot \partial^\zeta_x u\\
\leq& C\int \Big(|\nabla^2 \phi| |\nabla^3 v| |\phi \partial^\zeta_x u|+|\nabla \phi|^2 |\nabla^3 v| | \partial^\zeta_x u|+|\nabla \phi| |\nabla^3 v| |\phi \nabla \partial^\zeta_x u|\Big)\\
\leq&  C\Big(|\nabla^2 \phi|_3 |\nabla^3  v|_2 |\phi \partial^\zeta_x u|_6+|\nabla \phi|^2_\infty |\nabla^3  v|_2 |\partial^\zeta_x u|_2+|\nabla \phi|_\infty |\nabla^3  v|_2 |\phi \nabla (\partial^\zeta_xu) |_2\Big)\\
\leq &C\Big(|\nabla^2 \phi|_3 |\nabla^3  v|_2\big(|\phi \nabla (\partial^\zeta_x u) |_2+|\nabla \phi|_\infty| \partial^\zeta_x u |_2\big)+c^3_3 |\partial^\zeta_x u|_2+c^2_3 |\phi \nabla(\partial^\zeta_xu) |_2\Big)\\
\leq & C\Big(c^3_3 |\partial^\zeta_x u |_2+c^4_3\Big) +\frac{\alpha}{20}|\sqrt{\phi^2+\eta^2} \nabla (\partial^\zeta_x u) |^2_2,
\end{split}
\end{equation}
where we used the fact that
\begin{equation}\label{squandeng}
|\phi\nabla ^3 u|_6\leq C|\phi \nabla ^3 u|_{D^1}\leq C\big(|\nabla \phi|_\infty|\nabla^3 u|_2+|\phi \nabla^4 u|_2\big).
\end{equation}

For the term  $I_{12}$,  letting
$r=b=2$, $a=+\infty$, $f=v$, $g=\nabla u$
 in (\ref{ku11}) of Lemma \ref{zhen1}, from (\ref{zhu10222-1}) we obtain
\begin{equation}\label{szhu10606}
\begin{split}
I_{12}=&-\int \Big(\partial^\zeta_x(v\cdot \nabla u)-v\cdot \nabla (\partial^\zeta_x u)\Big)\cdot \partial^\zeta_x u\\
\leq & | \partial^\zeta_x(v\cdot \nabla u)-v\cdot \nabla (\partial^\zeta_x u)|_2|\partial^\zeta_x u|_2\\
\leq &C\left(|\nabla v|_\infty |\nabla^3 u|_2+|\nabla^3 v|_2|\nabla u|_\infty\right)|\partial^\zeta_x u|_2\\
\leq&  C\left(c_3|\nabla^3 u|^2_2+c_3\|\nabla u\|_2|\nabla^3 u|_2\right)\\
\leq & C\left(c_3|\nabla^3 u|^2_2+c_3(c_0+|\nabla^3 u|_2)|\nabla^3 u|_2 \right)\\
\leq & C\left(c^2_3|\nabla^3 u|^2_2+c^{2}_3\right).
\end{split}
\end{equation}

For the term $I_{13}$, from $|\zeta|=3$ and (\ref{squandeng}) we have
\begin{equation}\label{szhu1011}
\begin{split}
I_{13}=&-\int  \big(\partial^\zeta_x((\phi^2+\eta^2) Lu)-(\phi^2+\eta^2) L\partial^\zeta_x u\big)\cdot \partial^\zeta_x u\\
\leq &C\int \Big(|\nabla^3 \phi| |Lu| |\phi\partial^\zeta_x u|+|\nabla \phi| |\nabla^2 \phi| |Lu| |\partial^\zeta_x u|+|\nabla \phi|^2 |\nabla Lu| |\partial^\zeta_x u|\Big)\\
&+C\int \Big(|\nabla^2\phi| |\phi \nabla Lu| |\partial^\zeta_x u|+|\phi \nabla^2 Lu| |\nabla \phi| |\partial^\zeta_x u| \Big)\\
\leq &C\Big(|\nabla^3 \phi|_2|\nabla^2u|_{3}|\phi\nabla ^3 u|_6 +|\nabla \phi|_\infty |\nabla ^2 \phi|_3 |\nabla ^2 u|_6 | \nabla ^3 u|_2+|\nabla \phi|^2_\infty   | \nabla ^3 u|^2_2\\[2pt]
&\quad+|\nabla^2 \phi|_3|\nabla^3u|_{2}|\phi\nabla ^3 u|_6+|\nabla \phi|_\infty  |\phi \nabla ^4 u|_2 | \nabla ^3 u|_2\Big)\\[2pt]
\leq & C\Big(c_0| \nabla ^2 u|^{\frac{1}{2}}_2| \nabla ^3 u|^{\frac{1}{2}}_2\big(|\phi\nabla^4 u|_2+|\nabla \phi|_\infty |\nabla^3 u|_2\big)\\[2pt]
&\quad+ c_0|u|_{D^3}\big(|\phi\nabla^4 u|_2+|\nabla \phi|_\infty| \nabla^3 u|_2\big)+ c^2_0 |u|^2_{D^3}\Big)+ \frac{\alpha}{20}|\phi \nabla^4 u|^2_2\\
\leq&C\left(c^{4}_3 |u|^2_{D^3}+c^{2K+4}_3 \right)+\frac{\alpha}{20}|\sqrt{\phi^2+\eta^2}\nabla^4 u|^2_2.
\end{split}
\end{equation}

For the term $I_{14}$, we notice that
\begin{equation*}
\begin{split}
&\partial^\zeta_x(\nabla \phi^2  \cdot Q(v))-\nabla \phi^2 \cdot \partial^\zeta_x Q(v)\\
=&\sum_{1\le i,j,k\le 3}l_{ijk}\Big(C_{1ijk} \nabla \partial^{\zeta^i}_x \phi^2\cdot  \partial^{\zeta^j+\zeta^k}_x Q(v) +C_{2ijk} \nabla \partial^{\zeta^j+\zeta^k}_x \phi^2 \cdot \partial^{\zeta^i}_x Q(v)\Big)+ \nabla \partial^\zeta_x \phi^2 \cdot Q(v),
\end{split}
\end{equation*}
 where $\zeta=\zeta^1+\zeta^2+\zeta^3$ for three multi-indexes $\zeta^i\in \mathbb{R}^3$ $(i=1,2,3)$ satisfying $|\zeta^i|=1$; $C_{1ijk}$ and  $C_{2ijk}$  are all constants. The summation is through all permutation on $i, j, k$. $l_{ijk}=1$ if $i$, $j$ and $k$ are different from each other,  and otherwise $l_{ijk}=0$ since there is no duplicated differentiation with respect to such decomposition on $\zeta$.
Then, we deduce that
\begin{equation}\label{szhu1015}
\begin{split}
I_{14}=&\int  \big(\partial^\zeta_x(\nabla \phi^2 \cdot  Q(v))-\nabla \phi^2 \cdot \partial^\zeta_x Q(v)\big)\cdot \partial^\zeta_x u\\
=&\int \Big( \sum_{i,j,k}l_{ijk}C_{1ijk} \nabla \partial^{\zeta^i}_x \phi^2\cdot \partial^{\zeta^j+\zeta^k}_x  Q(v)\Big)\cdot \partial^\zeta_x u\\
&+\int \Big(\sum_{i,j,k}l_{ijk}C_{2ijk} \nabla \partial^{\zeta^j+\zeta^k}_x \phi^2\cdot  \partial^{\zeta^i}_x  Q(v)  \Big)\cdot \partial^\zeta_x u
+\int \big( \nabla \partial^\zeta_x \phi^2 \cdot Q(v)  \big)\cdot \partial^\zeta_x u\\
=:&I_{141}+I_{142}+I_{143}.
\end{split}
\end{equation}
It follows that
\begin{equation}\label{zhu1018}
\begin{split}
I_{141}=&\int \Big( \sum_{i,j,k}l_{ijk}C_{1ijk} \nabla \partial^{\zeta^i}_x \phi^2 \partial^{\zeta^j+\zeta^k}_x Q(v)\Big)\cdot \partial^\zeta_x u\\
\leq &C\left(|\nabla \phi|^2_\infty|\nabla^3 u|_2|\nabla^3 v|_2+|\nabla^3 v|_2|\phi\nabla^3 u|_6 |\nabla^2 \phi|_3\right)\\
\leq & C\left(c^3_3|\nabla^3 u|_2+c^2_3\big(|\sqrt{\phi^2+\eta^2} \nabla^4 u|_2+|\nabla \phi|_\infty| \nabla^3 u|_2\big)\right)\\
\leq &  C\left(c^{3}_3 |u|_{D^3}+c^{4}_3\right)+\frac{\alpha}{20}|\sqrt{\phi^2+\eta^2} \nabla^4 u|^2_2,\\
I_{142}=&\int \Big(\sum_{i,j,k}l_{ijk}C_{2ijk} \nabla \partial^{\zeta^j+\zeta^k}_x \phi^2 \cdot \partial^{\zeta^i}_x Q(v) \Big)\cdot \partial^\zeta_x u\\
\leq& C\left(|\nabla^3 \phi|_2|\nabla^2 v|_3|\phi\nabla^3 u|_6+|\nabla \phi|_\infty|\nabla^2 \phi|_3|\nabla^2 v|_6|\nabla^3 u|_2\right)\\
\leq &  C\left(c^3_3|\nabla^3 u|_2+c^2_3\big(|\sqrt{\phi^2+\eta^2} \nabla^4 u|_2+|\nabla \phi|_\infty |\nabla^3 u|_2\big)\right)\\
\leq & C\left(c^{3}_3 |u|_{D^3}+c^{4}_3\right)+\frac{\alpha}{20}|\sqrt{\phi^2+\eta^2} \nabla^4 u|^2_2.
\end{split}
\end{equation}
For the  term $I_{143}$, from integration by parts, we have
\begin{equation}\label{szhu1016}
\begin{split}
I_{143}
=&\int \Big( \nabla \partial^\zeta_x \phi^2 Q(v)\Big) \cdot \partial^\zeta_x u\\
=&-\int \sum_{i=1}^3 \Big( \partial^{\zeta-\zeta^i}_x \nabla \phi^2 \cdot \partial^{\zeta^i}_x Q(v) \cdot \partial^\zeta_x u\Big)-\int \sum_{i=1}^3 \Big( \partial^{\zeta-\zeta^i}_x \nabla \phi^2 \cdot  Q(v) \cdot  \partial^{\zeta+\zeta^i}_x u\Big)\\
=:&I_{1431}+I_{1432}.
\end{split}
\end{equation}
For simplicity, we only consider the case that $i=1$,  the rest terms can be estimated similarly. When $i=1$, the corresponding term in $I_{1431}$ is
\begin{equation}\label{szhu1016}
\begin{split}
I^{(1)}_{1431}
=&-\int  \partial^{\zeta^2+\zeta^3}_x \nabla \phi^2 \cdot \partial^{\zeta^1}_x Q(v) \cdot \partial^\zeta_x u\\
\leq & C\left(|\nabla^3 \phi|_2|\nabla^2 v|_3|\phi\nabla^3 u|_6+|\nabla \phi|_\infty|\nabla^2 \phi|_6|\nabla^2 v|_3|\nabla^3 u|_2\right)\\
\leq & C\left(c^3_3 |u|_{D^3}
+ c^2_3\big(|\phi\nabla^4 u|_2+|\nabla \phi \nabla^3 u|_2\big)\right)\\
\leq &C\left(c^3_3 |u|_{D^3}+c^4_3\right)+\frac{\alpha}{20}|\sqrt{\phi^2+\eta^2}\nabla^4 u|^2_2,
\end{split}
\end{equation}
and the corresponding term in  $I_{1432}$ is
\begin{equation}\label{zhu1016kl}
\begin{split}
I^{(1)}_{1432}=&-\int  \partial^{\zeta^2+\zeta^3}_x \nabla \phi^2 \cdot  Q(v) \cdot  \partial^{\zeta+\zeta^1}_x u\\
=&-2\int \Big(\partial^{\zeta^2+\zeta^3}_x \nabla \phi \phi+\partial^{\zeta^2}_x\nabla \phi \partial^{\zeta^3}_x\phi)\Big) \cdot  Q(v) \cdot  \partial^{\zeta+\zeta^1}_x u\\
&-2\int \Big(\partial^{\zeta^3}_x \nabla \phi \partial^{\zeta^2}_x\phi+ \nabla \phi \partial^{\zeta^2+\zeta^3}_x\phi\Big) \cdot  Q(v) \cdot  \partial^{\zeta+\zeta^1}_x u\\
=:&I_A+I_B+I_C+I_D.
\end{split}
\end{equation}
It is not hard to show that
\begin{equation}\label{szhu1016kl}
\begin{split}
I_A=&-2\int \Big(\partial^{\zeta^2+\zeta^3}_x \nabla \phi \phi\Big) \cdot  Q(v) \cdot  \partial^{\zeta+\zeta^1}_xu\\
\leq &C|\nabla^3 \phi|_2|\nabla v|_\infty|\phi\nabla^4 u|_2\leq Cc^4_3+\frac{\alpha}{20}|\sqrt{\phi^2+\eta^2}\nabla^4 u|^2_2.
\end{split}
\end{equation}
For the term $I_B$, from integration by parts, we deduce that
\begin{equation}\label{sszhu1016kls}
\begin{split}
I_B=&-2\int \Big(\partial^{\zeta^2}_x\nabla \phi \partial^{\zeta^3}_x\phi)\Big) \cdot  Q(v) \cdot  \partial^{\zeta+\zeta^1}_xu\\
&=2\int \partial_x^{\zeta^1} \Big[\Big(\partial^{\zeta^2}_x\nabla \phi \partial^{\zeta^3}_x\phi)\Big) \cdot  Q(v)\Big] \cdot  \partial^{\zeta}_xu\\
\leq & C\int \Big(|\nabla^3 u||\nabla v|\big(|\nabla^2 \phi|^2+|\nabla \phi| |\nabla^3 \phi|\big)+|\nabla^3 u||\nabla^2 v||\nabla^2 \phi||\nabla \phi|\Big)\\
\leq& C\Big(|\nabla^2 \phi|_3|\nabla^2 \phi|_6|\nabla^3 u|_2|\nabla v|_\infty+|\nabla^3 u|_2|\nabla v|_\infty|\nabla^3 \phi|_2|\nabla \phi|_\infty\\
&\quad+|\nabla^3 u|_2|\nabla^2 v|_3|\nabla^2 \phi|_6|\nabla \phi|_\infty\Big)\\
\leq&  Cc^3_3|\nabla^3 u|_2.
\end{split}
\end{equation}
Similarly to $I_B$, we have
\begin{equation}\label{sszhu1016kl}
I_C+I_D \leq  Cc^3_3|\nabla^3 u|_2,
\end{equation}
which, combined with (\ref{szhu1015})-(\ref{sszhu1016kl}), implies that
\begin{equation}\label{szhu10177}
\begin{split}
I_{14}\leq \frac{\alpha}{10}|\sqrt{\phi^2+\eta^2} \nabla^4 u|^2_2+ Cc^4_3 |u|_{D^3}+Cc^{4}_3.
\end{split}
\end{equation}

So, from  (\ref{szhu101})-(\ref{szhu1011}) and (\ref{szhu10177}), we arrive at
\begin{equation}\label{zhu1021}
\begin{split}
&\frac{d}{dt}|u|^2_{D^3}+\int (\phi^2+\eta^2)| \nabla^4 u|^2
\leq  Cc^{4}_3 |u|^2_{D^3}+Cc^{2K+4}_3.
\end{split}
\end{equation}
Then from Gronwall's inequality, we have
\begin{equation}\label{zhu10222}
\begin{split}
&|u(t)|^2_{D^3}+\int_0^t |\sqrt{\phi^2+\eta^2} \nabla^4 u|^2_2\text{d}s \leq  C\big(|u_0|^2_{D^3}+c^{2K+4}_3t\big)\exp (Cc^{4}_3 t)\leq Cc^{2}_0,
\end{split}
\end{equation}
for $0\leq t\leq T_3=\min\{T^*,(1+t)^{-2K-4}\}$.\\

\noindent\underline{Step 2}. The estimate for $|\partial^\zeta_x u_t|_2$  when $1\leq |\zeta|\leq 2$. First, from (\ref{zhu1020ss}) we  have
\begin{equation}\label{szhu1020ss}
\begin{split}
|u_t|_{D^1}\leq&  C\left(c^{K}_2+c_0|\sqrt{\phi^2+\eta^2} \nabla^3 u|_2\right)\leq  C\left(c^{K}_2+c^3_0\right)\leq Cc^{K}_2.
\end{split}
\end{equation}
Similarly, from the momentum equations we also have
\begin{equation}\label{szhu1020ss}
\begin{split}
|u_t|_{D^2}=&\Big|v\cdot\nabla u+\frac{A\gamma}{\gamma-1}\nabla \phi^{\frac{2\gamma-2}{\delta-1}}+(\phi^2+\eta^2) Lu-\nabla \phi^2 \cdot Q(v)\Big|_{D^2}\\
\leq & C\Big(\|v\|_3\|\nabla u\|_2+|\phi|^{\frac{2\gamma-2}{\delta-1}-1}_\infty|\nabla^3\phi|_2+|\phi|^{\frac{2\gamma-2}{\delta-1}-2}_\infty|\nabla \phi|_3|\nabla^2 \phi|_6\\
&+|\phi|^{\frac{2\gamma-2}{\delta-1}-3}_\infty|\nabla \phi|^2_\infty|\nabla \phi|_2+|\sqrt{\phi^2+\eta^2}|_\infty|\sqrt{\phi^2+\eta^2} \nabla^4 u|_2\\
&+(|\phi|_\infty+\|\nabla\phi\|_2)^2(\|u\|_3+\|v\|_3)\Big)\\
\leq &  C\left(c^{K}_3+c_0|\sqrt{\phi^2+\eta^2} \nabla^4 u|_2\right),
\end{split}
\end{equation}
which  implies that
$$
\int_{0}^{T_3} |u_t|^2_{D^2} \text{d}s \leq C\int_0^{T_3} \left(c^{2K}_3+c^2_0|\sqrt{\phi^2+\eta^2} \nabla^4 u|^2_2\right) \text{d}s\leq Cc^{4}_0.
$$
\end{proof}

Combining the estimates obtained in Lemmas \ref{f2}-\ref{s5}, we have
\begin{equation}\label{jkkll}
\begin{split}
1+\overline{\phi}^2+|\phi(t)|^2_\infty+\|\phi(t)-\overline{\phi}\|^2_3\leq& Cc^2_0,\\
|\phi_t(t)|^2_2 \leq Cc^4_1,\quad |\phi_t(t)|^2_{D^1} \leq Cc^4_2,\quad |\phi_t(t)|^2_{D^2} \leq& Cc^4_3,\\
\|u(t)\|^2_{ 1}\leq Cc^{2}_0,\quad |u(t)|^2_{ D^2}+|u_t(t)|^2_{2}+\int_{0}^{t}\Big(|\phi \nabla^3 u|^2_{2}+|u_t|^2_{D^1}\Big)\text{d}s\leq& Cc^{2K}_1,\\
|u(t)|^2_{ D^3}+|u_t(t)|^2_{D^1}+\int_{0}^{t}\Big(|\phi \nabla^4 u|^2_{2}+|u_t|^2_{D^2}\Big)\text{d}s\leq& Cc^{2K}_2,
\end{split}
\end{equation}
for
$0 \leq t \leq \min(T^*,(1+c_3)^{-2K-4})$. Therefore, if we define the constants $c_i$ ($i=1,2,3,4$) and $T^*$ by
\begin{equation}\label{dingyi}
\begin{split}
&c_1=C^{\frac{1}{2}}c_0, \quad  c_2=C^{\frac{1}{2}}c^K_1=C^{\frac{K+1}{2}}c^{K}_0,\quad c_3=C^{\frac{1}{2}}c^{K}_2=C^{\frac{K^2+K+1}{2}}c^{K^2}_0,\\
& c_4= C^{\frac{1}{2}}c^2_3=C^{K^2+K+\frac{3}{2}}c^{2K^2}_0, \quad \text{and} \quad T^*=\min(T, (1+c_3)^{-2K-4}),
\end{split}
\end{equation}
then we conclude that
\begin{equation}\label{jkk}
\begin{split}
1+\overline{\phi}^2+\sup_{0\leq t \leq T^*}\big(|\phi(t)|^2_\infty+\|\phi(t)-\overline{\phi}\|^2_3+\|u(t)\|^2_{ 1}\big)\leq& c^{2}_1,\\
\sup_{0\leq t \leq T^*}\big(|\phi_t(t)|^2_{2}+ |u(t)|^2_{ D^2}+|u_t(t)|^2_{2}\big)+\int_{0}^{T^*}\Big(|\phi \nabla^3 u|^2_{2}+|u_t|^2_{D^1}\Big)\text{d}t\leq& c^{2}_2,\\
\text{ess}\sup_{0\leq t \leq T^*}\big(|\phi_t(t)|^2_{D^1}+|u(t)|^2_{ D^3}+|u_t(t)|^2_{D^1}\big)+\int_{0}^{T^*}\Big(|\phi \nabla^4 u|^2_{2}+|u_t|^2_{D^2}\Big)\text{d}t\leq&c^{2}_3,\\
 \sup_{0\leq t \leq T^*}|\phi_t(t)|^2_{D^2} \leq& c^2_4.
\end{split}
\end{equation}

\subsection{Passing to the  limit $\eta\rightarrow 0$} With the help of the $\eta$-independent estimates established in (\ref{jkk}), we now establish the local existence result for the following linearized problem
without artificial viscosity under the assumption $\phi_0\geq 0$,

\begin{equation}\label{li4*}
\begin{cases}
\displaystyle
\phi_t+v\cdot \nabla \phi+\frac{\delta-1}{2}\psi \text{div}v=0,\\[9pt]
\displaystyle
u_t+v\cdot\nabla u +\frac{A\gamma}{\gamma-1} \nabla \phi^{\frac{2\gamma-2}{\delta-1}}+\phi^2 Lu=\nabla \phi^2 \cdot Q(v),\\[9pt]
(\phi,u)|_{t=0}=(\phi_0(x),u_0(x)),\quad x\in \mathbb{R}^3,\\[9pt]
\displaystyle
(\phi,u)\rightarrow (\overline{\phi},0),\quad \text{as}\quad  |x|\rightarrow +\infty,\quad t>0.
 \end{cases}
\end{equation}

\begin{lemma}\label{lem1q}
 Assume that the initial data  satisfy (\ref{th78qq}).
Then there exists a unique strong solution $(\phi, u)(x,t)$ to (\ref{li4*}) such that
\begin{equation}\label{regghq}\begin{split}
& \phi-\overline{\phi} \in C([0,T^*]; H^3), \ \phi _t \in C([0,T^*]; H^2),\\
& u\in C([0,T^*]; H^{s'})\cap L^\infty([0,T^*]; H^3),\quad \phi \nabla^4 u \in L^2([0,T^*] ; L^2), \\
& u_t \in C([0,T^*]; H^1)\cap L^2([0,T^*] ; D^2)
\end{split}
\end{equation}
for any constant $s'\in[2,3)$. Moreover,  $(\phi,u)$ also satisfies the  a priori estimates in (\ref{jkk}).
\end{lemma}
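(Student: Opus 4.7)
The plan is to construct $(\phi, u)$ as the vanishing artificial viscosity limit $\eta \to 0$ of the solutions $(\phi^\eta, u^\eta)$ furnished by Lemma \ref{lem1}. The uniform estimates in (\ref{jkk}), independent of $\eta \in (0,1)$, together with Banach--Alaoglu produce a subsequence (not relabeled) and a limit $(\phi, u)$ such that $\phi^\eta - \overline{\phi} \rightharpoonup^* \phi - \overline{\phi}$ in $L^\infty([0,T^*]; H^3)$, $\phi^\eta_t \rightharpoonup^* \phi_t$ in $L^\infty([0,T^*]; H^2)$, $u^\eta \rightharpoonup^* u$ in $L^\infty([0,T^*]; H^3)$, $u^\eta_t \rightharpoonup^* u_t$ in $L^\infty([0,T^*]; H^1)$, $u^\eta_t \rightharpoonup u_t$ in $L^2([0,T^*]; D^2)$, and $\phi^\eta \nabla^4 u^\eta \rightharpoonup \Xi$ in $L^2([0,T^*]; L^2)$ for some $\Xi$. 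By lower semicontinuity of norms under weak convergence, $(\phi, u)$ inherits all the bounds listed in (\ref{jkk}); the nonnegativity $\phi \ge 0$ is preserved in the limit since it is transported along characteristics of the linear equation $(\ref{li4})_1$.

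I would then promote these weak convergences to strong ones through Lemma \ref{aubin}. The bounds on $(\phi^\eta - \overline{\phi}, \phi^\eta_t)$ in $L^\infty H^3 \times L^\infty H^2$ yield, via part (2) of that lemma, $\phi^\eta \to \phi$ strongly in $C([0,T^*]; H^{s'})$ for every $s' \in [2,3)$; similarly, the $L^\infty H^3$ bound on $u^\eta$ paired with the $L^\infty H^1$ bound on $u^\eta_t$ gives $u^\eta \to u$ strongly in $C([0,T^*]; H^{s'})$ for every $s' \in [2,3)$. Picking any $s' > 5/2$ and using the Sobolev embedding $H^{s'} \hookrightarrow W^{1,\infty}$, I obtain $C_t W^{1,\infty}_x$ convergence, which suffices to pass to the limit in the nonlinear coefficients of (\ref{li4}): the transport terms $v \cdot \nabla \phi^\eta$ and $v \cdot \nabla u^\eta$, the pressure-like term $\nabla (\phi^\eta)^{2(\gamma-1)/(\delta-1)}$, and the friction term $\nabla (\phi^\eta)^2 \cdot Q(v)$. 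The viscosity term is handled distributionally: write $(\phi^\eta)^2 L u^\eta = \phi^\eta \cdot (\phi^\eta L u^\eta)$, where the first factor converges strongly in $C_t L^\infty_x$ and the second is bounded in $L^\infty_t L^2_x$ (via the $\phi \nabla^2 u$ bound extracted from (\ref{jkk})) and converges weakly to $\phi L u$. The artificial term $\eta^2 L u^\eta$ vanishes in $\mathcal{D}'$ since $L u^\eta$ is uniformly bounded in $L^\infty([0,T^*]; H^1)$. Hence $(\phi,u)$ is a distributional solution of (\ref{li4*}). The identification $\Xi = \phi \nabla^4 u$ is obtained by testing against $\xi \in C^\infty_c$, integrating by parts once to shift a derivative off $\nabla^4 u^\eta$, and invoking the strong $C_t H^{s'}$ convergence of $\phi^\eta$ with $s' > 5/2$.

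The regularity (\ref{regghq}) is then assembled as follows. The membership $\phi - \overline{\phi} \in L^\infty([0,T^*]; H^3) \cap C([0,T^*]; H^{s'})$ is in hand; the upgrade to $C([0,T^*]; H^3)$ comes from the standard linear transport theory (coefficients $v, \psi$ satisfying (\ref{vg})), by combining weak time-continuity in $H^3$ (guaranteed by $\phi_t \in L^\infty H^2$) with continuity of $\|\phi(t) - \overline{\phi}\|_3$, which follows from an $H^3$ energy identity for $(\ref{li4*})_1$ (or a Friedrichs mollification combined with Lemma \ref{zheng5}). The continuity $\phi_t \in C([0,T^*]; H^2)$ then drops out of the evolution equation $\phi_t = -v\cdot\nabla\phi - \tfrac{\delta-1}{2}\psi\,\text{div}\,v$ together with Lemma \ref{gag113}. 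For $u$, the $L^\infty H^3$ bound and the $C H^{s'}$ convergence are available; $u_t \in C([0,T^*]; H^1)$ follows from the uniform $L^\infty H^1$ bound and $(\ref{li4*})_2$, all of whose coefficients are time-continuous in the relevant norms. Uniqueness is routine: the difference of two strong solutions of the linear problem (\ref{li4*}) satisfies a Gronwall $L^2$ energy inequality with zero initial data, and hence vanishes.

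The main obstacle is the complete loss of parabolic smoothing on the limit problem due to the degeneracy of $\phi^2 L$: one cannot recover the estimate $\phi \nabla^4 u \in L^2_{t,x}$ a posteriori from the limit equation, so it must be transported through the limit procedure by lower semicontinuity, and the weak limit $\Xi$ must be correctly identified with the distributional $\phi \nabla^4 u$. A closely related subtlety is that in the limit $\eta = 0$ the coefficient of $Lu$ can vanish on open sets, so the momentum equation $(\ref{li4*})_2$ has to be read carefully at vacuum points; this is precisely why the extra regularity class $\phi \nabla^4 u \in L^2_{t,x}$ is retained in the statement (\ref{regghq}).
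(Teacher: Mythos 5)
Your overall architecture is the same as the paper's: vanishing artificial viscosity, uniform bounds from (\ref{jkk}), weak/weak* limits with lower semicontinuity, Aubin--Lions for strong convergence, distributional limit passage, Gronwall for uniqueness, and a separate argument for time continuity. However, there is one genuine flaw in the compactness step. You invoke Lemma \ref{aubin}(2) with $X_0=H^3(\mathbb{R}^3)$ and $X=H^{s'}(\mathbb{R}^3)$ to conclude $\phi^\eta\to\phi$ and $u^\eta\to u$ strongly in $C([0,T^*];H^{s'}(\mathbb{R}^3))$, and then upgrade to $C_tW^{1,\infty}_x$ convergence on all of $\mathbb{R}^3$. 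The embedding $H^3(\mathbb{R}^3)\hookrightarrow H^{s'}(\mathbb{R}^3)$ is continuous but \emph{not compact} on the unbounded domain, so Aubin--Lions only yields strong convergence in $C([0,T^*];H^2(B_R))$ for each ball $B_R$, which is exactly how the paper states it in (\ref{ert}). This does not sink the proof --- local strong convergence suffices to pass to the limit in the weak formulation (test functions are compactly supported) and to identify $\Xi=\phi\nabla^4 u$ --- but it does mean the memberships $u\in C([0,T^*];H^{s'}(\mathbb{R}^3))$ and $\phi-\overline{\phi}\in C([0,T^*];H^3(\mathbb{R}^3))$ of the \emph{limit} cannot be read off from a (nonexistent) global strong convergence. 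They must be proved directly for the limit function: for $u$, from $u\in L^\infty([0,T^*];H^3)$ and $u_t\in L^\infty([0,T^*];H^1)$ one gets $u\in C([0,T^*];H^2)\cap C([0,T^*];\mathrm{weak}\text{-}H^3)$ and then interpolates with Lemma \ref{gag111}; for $\phi$, the top-order continuity requires the norm estimate (\ref{qgb}) giving $\limsup_{t\to 0}\|\phi(t)-\overline{\phi}\|_3\le\|\phi_0-\overline{\phi}\|_3$, combined with Lemma \ref{zheng5} and time reversibility of the transport equation. You do gesture at the latter mechanism for $\phi$, so that part is acceptable in outline, but the corresponding argument for $u$ is missing from your write-up and is replaced by the invalid global compactness claim.

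A second, minor point: your parenthetical that $\phi\ge 0$ is ``transported along characteristics of the linear equation $(\ref{li4})_1$'' is incorrect, since that equation carries the source term $-\frac{\delta-1}{2}\psi\,\mathrm{div}\,v$ and sign preservation fails for the linearized problem. Fortunately nonnegativity is not part of the statement of this lemma; in the paper it is recovered only for the nonlinear problem, via the representation of $\rho$ along particle paths of $u$ itself in the proof of Theorem \ref{th2}.
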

\begin{proof} We shall prove the existence, uniqueness and time continuity in three steps.\\

\noindent\underline{Step 1}. Existence.
From Lemma \ref{lem1}, for every $\eta>0$,  there exists a unique strong solution $(\phi^\eta,  u^\eta)(x,t)$  to the linearized problem (\ref{li4}) satisfying  the estimates in  (\ref{jkk}), which are independent of the artificial viscosity coefficient $\eta$.

By virtue or  the uniform estimates in (\ref{jkk}) independent of  $\eta$ and  the compactness in Lemma \ref{aubin} (see \cite{jm}), we know that for any $R> 0$,  there exists a subsequence of solutions (still denoted by) $(\phi^\eta,  u^\eta)$, which  converges  to  a limit $(\phi,  u)$ in the following  strong sense:
\begin{equation}\label{ert}\begin{split}
&(\phi^\eta,u^\eta) \rightarrow (\phi,u) \quad \text{ in } \ C([0,T^*];H^2(B_R)),\quad \text{as}\ \eta\rightarrow 0.
\end{split}
\end{equation}

Again, using the uniform estimates in (\ref{jkk}) independent of  $\eta$, we also know that  there exists a subsequence (of subsequence chosen above) of solutions (still denoted by) $(\phi^\eta,  u^\eta)$, which    converges to  $(\phi,  u)$ in the following  weak  or  weak* sense:
\begin{equation}\label{ruojixian}
\begin{split}
(\phi^\eta, u^\eta)\rightharpoonup  (\phi,u) \quad &\text{weakly* \ in } \ L^\infty([0,T^*];H^3(\mathbb{R}^3)),\\
\phi^\eta_t\rightharpoonup  \phi_t \quad &\text{weakly* \ in } \ L^\infty([0,T^*];H^2(\mathbb{R}^3)),\\
 u^\eta_t\rightharpoonup  u_t \quad &\text{weakly* \ in } \ L^\infty([0,T^*];H^1(\mathbb{R}^3)),\\
 u^\eta_t\rightharpoonup  u_t \quad &\text{weakly \ \  in } \ \ L^2([0,T^*];D^2(\mathbb{R}^3)).
\end{split}
\end{equation}

Combining  the strong convergence in (\ref{ert}) and the weak convergence in (\ref{ruojixian}), we easily obtain that  $(\phi,  u) $ also satisfies the local estimates in (\ref{jkk}) and
\begin{equation}\label{ruojixian1}
\begin{split}
\phi^\eta \nabla^4 u^\eta \rightharpoonup  \phi\nabla^4 u \quad &\text{weakly \ in } \ L^2(\mathbb{R}^3 \times [0,T^*]).
\end{split}
\end{equation}

Now we  are going to show that $(\phi,  u) $ is a weak solution in the sense of distribution  to the linearized problem (\ref{li4*}).
Multiplying $(\ref{li4})_2$ by  test function  $w(t,x)=(w^1,w^2,w^3)\in C^\infty_c (\mathbb{R}^3 \times [0,T^*))$ on both sides, and integrating over $\mathbb{R}^3 \times [0,T^*]$, we have

\begin{equation}\label{zhenzheng}
\begin{split}
&\int_0^t \int  \Big(u^\eta\cdot w_t - (v\cdot \nabla) u^\eta \cdot w +\frac{A\gamma}{\gamma-1}(\phi^\eta)^{\frac{2\gamma-2}{\delta-1}}\text{div}w \Big)\text{d}x\text{d}s\\
=&-\int u_0 \cdot w(0,x)+\int_0^t \int \Big(((\phi^\eta)^2+\eta^2) Lu^\eta\cdot w-\nabla (\phi^\eta)^2 \cdot Q(v) \cdot w\Big) \text{d}x\text{d}s.
\end{split}
\end{equation}
Combining  the strong convergence in (\ref{ert}) and  the weak convergences in (\ref{ruojixian})-(\ref{ruojixian1}),  and  letting $\eta\rightarrow 0$ in (\ref{zhenzheng}),  we  have
\begin{equation}\label{zhenzhengxx}
\begin{split}
&\int_0^t \int \Big(u \cdot w_t - (v\cdot \nabla) u \cdot w +\frac{A\gamma}{\gamma-1} \phi^{\frac{2\gamma-2}{\delta-1}}\text{div}w \Big)\text{d}x\text{d}s\\
&=-\int u_0 \cdot w(0,x)+\int_0^t \int \Big(\phi^2 Lu \cdot w
-\nabla \phi^2 \cdot Q(v) \cdot w \Big)\text{d}x\text{d}s.
\end{split}
\end{equation}
So it is obvious that $(\phi,  u) $ is a weak solution in the sense of distribution  to the linearized problem (\ref{li4*}), satisfying the  following regularities
\begin{equation}\label{zheng}
\begin{split}
& \phi-\overline{\phi} \in L^\infty([0,T^*]; H^3), \ \phi _t \in L^\infty([0,T^*]; H^2), \\
& u\in L^\infty([0,T^*]; H^3),\quad \phi \nabla^4 u \in L^2([0,T^*] ; L^2), \\
& u_t \in L^\infty([0,T^*]; H^1)\cap L^2([0,T^*] ; D^2),
\end{split}
\end{equation}
where we used the lower semi-continuity of various norms in the weak or weak* convergence in (\ref{ruojixian})-(\ref{ruojixian1}). Therefore, this weak solutions $(\phi,  u) $ of \eqref{li4*} is actually
a strong one.\\

\noindent\underline{Step 2}. Uniqueness.  Let $(\phi_1,u_1)$ and $(\phi_2,u_2)$ be two solutions obtained in the above step. For $\varphi=\phi_1-\phi_2$, we have from $(\ref{li4*})_1$ that

\begin{equation}\label{qaq}
\begin{split}
{\varphi}_t+v\cdot \nabla {\varphi}=0,
\end{split}
\end{equation}
which immediately implies  that ${\varphi}=0$  in $\mathbb{R}^3$ with zero initial data.

For $\overline{u}=u_1-u_2$, from $(\ref{li4*})_2$, using the fact $\phi_1=\phi_2$, it is clear that
\begin{equation}\label{aq}
\begin{split}
\overline{u}_t+v\cdot \nabla \overline{u}-\phi^2_1 L\overline{u}=0.
\end{split}
\end{equation}
Multiplying  (\ref{aq}) by $\overline{u}$  on both sides, and integrating  over $\mathbb{R}^3$,  we have
 \begin{equation}\label{bq}
\begin{split}
\frac{d}{dt} |\overline{u}|^2_2 +|\phi_1 \nabla \overline{u}|^2_2
\leq& C|\nabla v|_\infty |\overline{u}|^2_2+C|\overline{u}|_2 |\nabla \phi_1|_\infty |\phi_1 \nabla \overline{u}|_2\\
\leq & \frac{1}{10}|\phi_1 \nabla \overline{u}|^2_2+C(|\nabla v|_\infty +|\nabla \phi_1|^2_\infty)|\overline{u}|^2_2.
\end{split}
\end{equation}
Now, the Gronwall's inequality, along with zero initial data of $\overline{u}$ implies that $\overline{u}=0$ in $\mathbb{R}^3$. This completes the proof of uniqueness.\\

\noindent\underline{Step 3}. Time continuity.   For $\phi$,  the regularities in (\ref{zheng}) and the classical Sobolev imbedding theorem infer that
\begin{equation}\label{liu02}
\phi-\overline{\phi} \in C([0,T^*];H^2) \cap C([0,T^*]; \text{weak}-H^3).
\end{equation}

Using the same arguments as  in the proof of Lemma \ref{f2}, we have
\begin{equation}\label{qgb}\begin{split}
&\|\phi(t)-\overline{\phi}\|^2_3\\
\leq& \Big(\|\phi_0-\overline{\phi}\|^2_{3}+C\int_0^t \big(\|\nabla \psi\|^2_2\|v\|^2_3+|\phi\nabla^4 v|^2_2\big)\text{d}s\Big) \exp\Big(C\int_0^t \big(\| v(s)\|_{3}+1\big)\text{d}s\Big),
\end{split}
\end{equation}
which implies that
\begin{equation}\label{liu03}
\displaystyle \lim_{t\rightarrow 0}\sup \|\phi(t)-\overline{\phi}\|_3 \leq \|\phi_0-\overline{\phi}\|_3.
\end{equation}
From  Lemma \ref{zheng5} and  (\ref{liu02}), we know that  $\phi$ is right continuous at $t=0$ in $H^3$ space. The time reversibility of the equation $(\ref{li4*})_1$ yields
\begin{equation}\label{xian}
\phi-\overline{\phi} \in C([0,T^*];H^3).
\end{equation}

For $\phi_t$, we note that
\begin{equation}\label{liu04}
\phi_t=-v\cdot \nabla \phi-\frac{\delta-1}{2}\psi \text{div}v.
\end{equation}
On the other hand, since
\begin{equation}\label{liu05}
\psi \nabla v\in L^2([0,T^*];H^3),\quad (\psi \nabla v)_t  \in L^2([0,T^*];H^1),
\end{equation}
using Sobolev imbedding theorem, we have
\begin{equation}\label{liu06}
\psi \nabla v\in C([0,T^*];H^2),
\end{equation}
which implies that
$$
\phi_t \in C([0,T^*];H^2).
$$

For velocity $u$, from the regularities shown in (\ref{zheng}) and Sobolev imbedding theorem, we know that
\begin{equation}\label{zheng1}
\begin{split}
 u\in C([0,T^*]; H^2)\cap  C([0,T^*]; \text{weak}-H^3).
\end{split}
\end{equation}
Then from  Lemma \ref{gag111},  for any $s'\in [2,3)$, we have

$$
\|u\|_{s'} \leq C_3 \|u\|^{1-\frac{s'}{3}}_0 \|u\|^{\frac{s'}{3}}_3.
$$
This, together with the upper bound estimates shown in (\ref{jkk}) and the time continuity (\ref{zheng1}), yields
\begin{equation}\label{zheng2}
\begin{split}
 u\in C([0,T^*]; H^{s'}).
\end{split}
\end{equation}

Finally, we consider $u_t$.  Noting that
\begin{equation}\label{zheng3}
u_t=-v\cdot\nabla u -\frac{2A\gamma}{\delta-1}\phi^{\frac{2r-\delta-1}{\delta-1}}\nabla \phi+\phi^2 Lu+\nabla \phi^2\cdot Q(v),
\end{equation}
where
$$
Q(v)=\frac{\delta}{\delta-1}\left(\alpha(\nabla v+(\nabla v)^\top)+\beta \text{div}v \mathbb{I}_3\right)\in L^2([0,T^*];H^2),
$$
we then have from (\ref{zheng}) that
\begin{equation}\label{zheng4}
\phi^2 Lu\in L^2([0,T^*];H^2), \quad (\phi^2 Lu)_t\in L^2([0,T^*];L^2),
\end{equation}
which means that
\begin{equation}\label{gong4}
\phi^2 Lu\in C([0,T^*];H^1).
\end{equation}
Combining (\ref{vg}), (\ref{xian}), (\ref{zheng2}) and  (\ref{gong4}), we deduce that
$$
u_t \in C([0,T^*];H^1).
$$
We thus complete the proof of this lemma.

\end{proof}

\subsection{Proof of Theorem \ref{ths1}}  Now we turn to the nonlinear problem (\ref{eq:cccq})-(\ref{sfanb1}). Our proof  is based on the classical iteration scheme and the existence results for the linearized problem obtained  in  subsection $3.4$. We first define constants $c_{0}$ and  $c_{1}$, $c_2$, $c_3$, $c_4$ as in subsection 3.3. Assume that

\begin{equation*}\begin{split}
&2+\overline{\phi} +|\phi_0|_{\infty}+\|(\phi_0-\overline{\phi}, u_0)\|_{3}\leq c_0.
\end{split}
\end{equation*}
Let $(\phi^0,  u^0)$
be the solution to the  system
\begin{equation}\label{zheng6}
\begin{cases}
 Y_t+u_0 \cdot \nabla Y=0 \quad \text{in} \quad (0,+\infty)\times \mathbb{R}^3,\\[10pt]
Z_t- Y^2\triangle Z=0 \quad \ \text{in} \quad  (0,+\infty)\times \mathbb{R}^3 ,\\[10pt]
(Y,Z)|_{t=0}=(\phi_0,u_0) \quad \text{in} \quad \mathbb{R}^3,\\[10pt]
(Y,Z)\rightarrow (\overline{\phi},0) \quad \text{as } \quad |x|\rightarrow +\infty,\quad t> 0,
\end{cases}
\end{equation}
 with the regularities
\begin{equation}\label{zheng6}
\begin{split}
&\phi^0-\overline{\phi}  \in C([0,T^*];H^{3}),\quad \phi^0 \nabla^4 u^0 \in L^2([0,T^*];L^2), \\
& u^0\in C([0,T^*];H^{s'})\cap  L^\infty([0,T^*];H^3) \quad \text{for\ any } \ s' \in [2,3).
\end{split}
\end{equation}
Due to the regularity of $(\phi^0, u^0)(x)$, there is a positive time $T^{**}\in (0,T^*]$ such that
\begin{equation}\label{jizhu}
\begin{split}
1+\overline{\phi}^2+\sup_{0\leq t \leq T^{**}}\big(|\phi^0(t)|^2_\infty+\|\phi^0(t)-\overline{\phi}\|^2_3+\|u^0(t)\|^2_{ 1}\big)\leq& c^{2}_1,\\
\sup_{0\leq t \leq T^{**}}\big(|\phi^0_t(t)|^2_{2}+ |u^0(t)|^2_{ D^2}+|u^0_t(t)|^2_{2}\big)+\int_{0}^{T^{**}}\Big(|\phi^0 \nabla^3 u^0|^2_{2}+|u^0_t|^2_{D^1}\Big)\text{d}t\leq& c^{2}_2,\\
\text{ess}\sup_{0\leq t \leq T^{**}}\big(|\phi^0_t|^2_{D^1}+|u^0|^2_{ D^3}+|u^0_t|^2_{D^1}\big)(t)+\int_{0}^{T^{**}}\Big(|\phi^0 \nabla^4 u^0|^2_{2}+|u^0_t|^2_{D^2}\Big)\text{d}t\leq &c^{2}_3,\\
 \sup_{0\leq t \leq T^{**}}|\phi^0_t|^2_{D^2} \leq& c^2_4.
\end{split}
\end{equation}
We now give the proof of Theorem \ref{ths1}.
\begin{proof} We prove the existence, uniqueness and time continuity in three steps.\\

\noindent\underline{Step 1}: Existence. Letting $(\psi,v)=(\phi^0,u^0)$, we first define $(\phi^1,u^1)$ as a strong solution to problem (\ref{li4*}). Then we construct approximate solutions $(\phi^{k+1},u^{k+1})$ inductively, as follows: assuming that $(\phi^{k}, u^{k})$ was defined for $k\geq 1$, let $(\phi^{k+1},  u^{k+1})$  be the unique solution to problem (\ref{li4*}) with $(\psi,v)$ replaced by $ (\phi^k,u^{k})$, i.e., $(\phi^{k+1},  u^{k+1})$  is the unique solution of the following problem
\begin{equation}\label{li6}
\begin{cases}
\displaystyle
\ \phi^{k+1}_t+u^{k}\cdot \nabla \phi^{k+1}+\frac{\delta-1}{2}\phi^{k}\text{div} u^{k}=0,\\[9pt]
\displaystyle
\ u^{k+1}_t+u^{k}\cdot\nabla u^{k+1} +\frac{2A\gamma}{\delta-1}\Phi^{k+1}\nabla \phi^{k+1}+(\phi^{k+1})^2Lu^{k+1}=\nabla (\phi^{k+1})^2 \cdot Q(u^{k}),\\[9pt]
\ (\phi^{k+1}, u^{k+1})|_{t=0}=(\phi_0,u_0),\quad x\in \mathbb{R}^3,\\[9pt]
\displaystyle
(\phi^{k+1},u^{k+1})\rightarrow (\overline{\phi},0),\quad \text{as}\quad  |x|\rightarrow +\infty,\quad t>0,
 \end{cases}
\end{equation}
where $
\Phi^{k+1}=(\phi^{k+1})^{\frac{2\gamma-\delta-1}{\delta-1}}.$

From the estimates shown in Section $3.4$, we  know  that the sequence $(\phi^{k},  u^{k})$ satisfies the  uniform a priori estimates in (\ref{jkk}) for $0 \leq t \leq T^{**}$.

Now we  prove the  convergence of the whole sequence  $(\phi^k,  u^k)$ of approximate solutions to a limit $(\phi, u)$  in some strong sense.
Let
\begin{equation*}\begin{split}
&\overline{\phi}^{k+1}=\phi^{k+1}-\phi^k,\quad  \overline{u}^{k+1}=u^{k+1}-u^k,\\
&\overline{\Phi}^{k+1}=\Phi^{k+1}-\Phi^k=(\phi^{k+1})^{\frac{2\gamma-\delta-1}{\delta-1}}-(\phi^k)^{\frac{2\gamma-\delta-1}{\delta-1}}.
\end{split}
\end{equation*}
 Then from (\ref{li6}) we have
 \begin{equation}
\label{eq:1.2w}
\begin{cases}
\displaystyle
\ \  \overline{\phi}^{k+1}_t+u^k\cdot \nabla\overline{\phi}^{k+1} +\overline{u}^k\cdot\nabla\phi ^{k}+\frac{\delta-1}{2}(\overline{\phi}^{k} \text{div}u^{k-1} +\phi ^{k}\text{div}\overline{u}^k)=0,\\[12pt]
\displaystyle
\ \ \overline{u}^{k+1}_t+ u^k\cdot\nabla \overline{u}^{k+1}+ \overline{u}^{k} \cdot \nabla u^{k}+\frac{2A\gamma}{\delta-1}(\Phi^{k+1}\nabla \overline{\phi}^{k+1}+\overline{\Phi}^{k+1}\nabla \phi^k) \\[12pt]
\displaystyle
=-(\phi^{k+1})^2L\overline{u}^{k+1}- \overline{\phi}^{k+1}(\phi^{k+1}+\phi^k)Lu^k\\[12pt]
\displaystyle
\ \ +\nabla (\overline{\phi}^{k+1}(\phi^{k+1}+\phi^k))\cdot Q(u^k)+\nabla (\phi^k)^2\cdot (Q(u^{k})-Q(u^{k-1})).
\end{cases}
\end{equation}

First, multiplying $(\ref{eq:1.2w})_1$ by $2\overline{\phi}^{k+1}$ and integrating  over $\mathbb{R}^3$, we have
\begin{equation*}
\begin{split}
\frac{d}{dt}|\overline{\phi}^{k+1}|^2_2=& -2\int\Big(u^k\cdot \nabla\overline{\phi}^{k+1} +\overline{u}^k\cdot\nabla\phi ^{k}+\frac{\delta-1}{2}(\overline{\phi}^{k} \text{div}u^{k-1} +\phi ^{k}\text{div}\overline{u}^k)\Big)\overline{\phi}^{k+1}\\
\leq& C\Big(|\nabla u^k|_\infty|\overline{\phi}^{k+1}|^2_2+ |\overline{\phi}^{k+1}|_2|\overline{u}^k|_2|\nabla \phi^k|_\infty\\
&\qquad+|\overline{\phi}^{k}|_2|\nabla u^{k-1}|_\infty|\overline{\phi}^{k+1}|_2+|\phi^k \text{div}\overline{u}^k|_2 |\overline{\phi}^{k+1}|_2\Big),
\end{split}
\end{equation*}
which yields that ( for $0<\nu \leq \frac{1}{10}$ is a constant)
\begin{equation}\label{go64}\begin{cases}
\displaystyle
\frac{d}{dt}|\overline{\phi}^{k+1}(t)|^2_2\leq A^k_\nu(t)|\overline{\phi}^{k+1}(t)|^2_2+\nu\Big( |\overline{u}^k(t)|^2_2+|\overline{\phi}^k(t)|^2_2+|\phi^k \text{div}\overline{u}^k(t)|^2_2\Big),\\[12pt]
\displaystyle
A^k_\nu(t)=C\Big(|\nabla u^k|_{\infty}+\frac{1}{\nu}|\nabla u^{k-1}|^2_{\infty}+\frac{1}{\nu}|\nabla \phi^{k}|^2_{\infty}+\frac{1}{\nu} \Big).
\end{cases}
\end{equation}
From (\ref{jkk}), we know
$$
\ \int_0^t A^k_\nu(s)\text{d}s\leq C_{\nu}t,\quad \text{for}\quad  t\in[0,T^{**}],
$$
where $C_{\nu}$ is a positive constant  depending on $\nu$ and  constant $C$.

Second, we multiply $(\ref{eq:1.2w})_2$ by $2\overline{u}^{k+1}$ and integrate  over $\mathbb{R}^3$ to
find
\begin{equation}\label{zheng8}
\begin{split}
&\frac{d}{dt}|\overline{u}^{k+1}|^2_2+2\alpha|\phi^{k+1}\nabla\overline{u}^{k+1} |^2_2+2(\alpha+\beta)|\phi^{k+1}\text{div}\overline{u}^{k+1} |^2_2\\
=& -2\int  \Big( u^k \cdot \nabla \overline{u}^{k+1}+\overline{u}^{k} \cdot \nabla u^{k}\Big) \cdot \overline{u}^{k+1}-2\int \frac{2A\gamma}{\delta-1}(\Phi^{k+1}\nabla \overline{\phi}^{k+1}+\overline{\Phi}^{k+1}\nabla \phi^k)  \cdot \overline{u}^{k+1}\\
&-4\alpha\int \phi^{k+1} \nabla \phi^{k+1}\cdot \nabla \overline{u}^{k+1}\cdot \overline{u}^{k+1}-4(\alpha+\beta)\int  \phi^{k+1} \nabla \phi^{k+1}\cdot  \overline{u}^{k+1} \text{div} \overline{u}^{k+1}\\
&-2\int  \overline{\phi}^{k+1}(\phi^{k+1}+\phi^k)Lu^k \cdot  \overline{u}^{k+1}
+2\int \nabla (\overline{\phi}^{k+1}(\phi^{k+1}+\phi^k))\cdot Q(u^k) \cdot  \overline{u}^{k+1} \\
&+2\int \nabla (\phi^k)^2\cdot (Q(u^{k})-Q(u^{k-1})) \cdot  \overline{u}^{k+1}=:\sum_{i=1}^{9} J_i.
\end{split}
\end{equation}

 We now  estimate $J_i$ $(i=1, \cdots, 9)$ term by term. For the term $J_1$, we see from integration by parts that
\begin{equation}\label{ya1}
\begin{split}
J_1= -2\int u^k \cdot \nabla \overline{u}^{k+1} \cdot \overline{u}^{k+1}\leq C|\nabla u^k|_\infty | \overline{u}^{k+1}|^2_2.
\end{split}
\end{equation}

For $J_2$, it is easy to show that
\begin{equation}\label{ya1}
\begin{split}
J_2=& -2\int \overline{u}^{k} \cdot \nabla u^{k}\cdot \overline{u}^{k+1}\\
\leq & C|\nabla u^k|_\infty | \overline{u}^{k}|_2 | \overline{u}^{k+1}|_2
\leq  \frac{C}{\nu} |\nabla u^k|^2_\infty | \overline{u}^{k+1}|^2+\nu | \overline{u}^{k}|^2_2.
\end{split}
\end{equation}

Applying integration by parts for $J_3$, we have
\begin{equation}\label{ya2}
\begin{split}
J_3=&-2\int \frac{2A\gamma}{\delta-1}\Phi^{k+1}\nabla \overline{\phi}^{k+1}\cdot \overline{u}^{k+1}\\
=& \frac{4A\gamma}{\delta-1} \int \big(\Phi^{k+1}  \overline{\phi}^{k+1}\text{div} \overline{u}^{k+1}+ \overline{\phi}^{k+1}\nabla\Phi^{k+1}\cdot \overline{u}^{k+1}\big)\\
\leq &C\Big( |\phi^{k+1}|^{\frac{2\gamma-2\delta}{\delta-1}}_\infty | \overline{\phi}^{k+1}|_2 |\phi^{k+1}\text{div}\overline{u}^{k+1} |_2+ |\overline{\phi}^{k+1}|_2|\nabla\Phi^{k+1}|_\infty | \overline{u}^{k+1}|_2\Big)\\
\leq &  C\Big(|\phi^{k+1}|^{\frac{4\gamma-4\delta}{\delta-1}}_\infty| \overline{\phi}^{k+1}|^2_2+ |\overline{\phi}^{k+1}|^2_2+|\nabla\Phi^{k+1}|^2_\infty | \overline{u}^{k+1}|^2_2\Big) +\frac{\alpha}{20}  |\phi^{k+1}\nabla \overline{u}^{k+1} |^2_2.
\end{split}
\end{equation}

$J_4$ is estimated directly below,
\begin{equation}\label{ya3}
\begin{split}
J_4=&-2\int \frac{2A\gamma}{\delta-1}\overline{\Phi}^{k+1}\nabla \phi^k \cdot \overline{u}^{k+1}\\
\leq & C|\overline{\Phi}^{k+1}|_2 |\nabla \phi^k|_\infty | \overline{u}^{k+1}|_2\\
\leq & C \Big(\big||\phi^{k+1}|+|\phi^{k}|\big|^{\frac{4\gamma-4\delta}{\delta-1}}_\infty|\overline{\phi}^{k+1}|^2_2+ |\nabla \phi^k|^2_\infty | \overline{u}^{k+1}|^2_2\Big),\\
\end{split}
\end{equation}
where we used the fact that
$$
|\overline{\Phi}^{k+1}|_2=|(\phi^{k+1})^{\frac{2\gamma-\delta-1}{\delta-1}}-(\phi^k)^{\frac{2\gamma-\delta-1}{\delta-1}}|_2\leq C \big||\phi^{k+1}|+|\phi^{k}|\big|^{\frac{2\gamma-2\delta}{\delta-1}}_\infty|\overline{\phi}^{k+1}|_2.
$$
Similarly, we are able to treat terms $J_5$-$J_7$ in the following way,
\begin{equation}\label{ya4}
\begin{split}
J_5=&-4\alpha\int \phi^{k+1} \nabla \phi^{k+1}\cdot \nabla \overline{u}^{k+1}\cdot \overline{u}^{k+1}\\
\leq & C|\nabla \phi^{k+1}|_\infty |\phi^{k+1}\nabla \overline{u}^{k+1}|_2|\overline{u}^{k+1}|_2\\
\leq&  C |\nabla \phi^{k+1}|^2_\infty|\overline{u}^{k+1}|^2_2+\frac{\alpha}{20} |\phi^{k+1}\nabla \overline{u}^{k+1}|^2_2,\\
J_6=&-4(\alpha+\beta)\int  \phi^{k+1} \nabla \phi^{k+1}\cdot  \overline{u}^{k+1} \text{div} \overline{u}^{k+1}\\
\leq& C|\nabla \phi^{k+1}|_\infty |\phi^{k+1}\nabla \overline{u}^{k+1}|_2|\overline{u}^{k+1}|_2\\
\leq&  C |\nabla \phi^{k+1}|^2_\infty|\overline{u}^{k+1}|^2_2+\frac{\alpha}{20} |\phi^{k+1}\nabla \overline{u}^{k+1}|^2_2,\\
%
J_7=&2\int  \overline{\phi}^{k+1}(\phi^{k+1}+\phi^k)Lu^k \cdot  \overline{u}^{k+1} \\
\leq & C\Big(| \overline{\phi}^{k+1}|_2| \phi^{k+1}\overline{u}^{k+1} |_6|Lu^k|_3+|\overline{\phi}^{k+1}|_2|\phi^kLu^k|_\infty| \overline{u}^{k+1}|_2\Big)\\
\leq & C\Big(| \overline{\phi}^{k+1}|_2(| \phi^{k+1}\nabla \overline{u}^{k+1} |_2+|\nabla\phi^{k+1} |_\infty|\overline{u}^{k+1} |_2)|Lu^k|_3\\
&\qquad+|\overline{\phi}^{k+1}|_2(\|\nabla \phi^k\|_2\|u^k\|_3+|\phi^k \nabla^4 u^k|_2)| \overline{u}^{k+1}|_2\Big)\\
\leq&C\Big( |Lu^k|^2_3 |\overline{\phi}^{k+1}|^2_2 +| \overline{\phi}^{k+1}|^2_2+|\nabla\phi^{k+1} |^2_\infty|Lu^k|^2_3|\overline{u}^{k+1} |^2_2\\
&\qquad+(\|\nabla \phi^k\|_2\|\nabla^2 u^k\|_1+|\phi^k \nabla^4 u^k|_2)^2|\overline{u}^{k+1} |^2_2\Big)+\frac{\alpha}{20}  | \phi^{k+1}\nabla \overline{u}^{k+1} |^2_2,
\end{split}
\end{equation}
where we used the fact (see Lemma \ref{lem2as}) that
\begin{equation}\label{qianru}
\begin{split}
|\phi^{k}\nabla^2 u^k|_\infty\leq&  C|\phi^{k}\nabla^2 u^k|^{\frac{1}{2}}_6|\nabla (\phi^{k}\nabla^2 u^k)|^{\frac{1}{2}}_6\\
\leq& C|\phi^{k}\nabla^2 u^k|^{\frac{1}{2}}_{D^1}|\nabla (\phi^{k}\nabla^2 u^k)|^{\frac{1}{2}}_{D^1}\\
\leq &C\|\nabla (\phi^{k}\nabla^2 u^k)\|_1\\
\leq &C\Big(|\nabla \phi^k|_\infty\|\nabla^2 u^k\|_1+|\phi^k|_\infty|\nabla^3 u^k|_2+|\phi^k\nabla^4 u^k|_2+|\nabla^2 \phi^k|_6|\nabla^2u^k|_3\Big)\\
\leq &C\Big(\|\nabla\phi^k\|_2\|\nabla^2u^k\|_1+|\phi^k\nabla^4 u^k|_2\Big).
\end{split}
\end{equation}

Now we turn to the tricky term $J_8$. First we have
\begin{equation}\label{ya7}
\begin{split}
J_8=&2\int \nabla (\overline{\phi}^{k+1}(\phi^{k+1}+\phi^k))\cdot Q(u^k) \cdot  \overline{u}^{k+1}\\
=&-2 \int \sum_{i,j}\overline{\phi}^{k+1} (\phi^{k+1}+\phi^k)\partial_i a^{ij}_k\overline{u}^{k+1,j}-2\int  \sum_{i,j} \overline{\phi}^{k+1}(\phi^{k+1}+\phi^k) a^{ij}_k\partial_i \overline{u}^{k+1,j}\\=:& J_{81}+J_{82}+J_{83}+J_{84},
\end{split}
\end{equation}
where $u^{k,j}$ represents the $j$-$th$ component of $u^k$ ($k\geq 1$),
$$\overline{u}^{k,j}=u^{k,j}-u^{k-1,j},\quad \text{for}\quad  k\geq 1, \quad j=1,2,3,$$ and the quantity $a^{ij}_k$ is given by
$$
a^{ij}_k=\frac{\delta}{\delta-1}\Big(\alpha (\partial_i u^{k,j}+\partial_j u^{k,i}\big)+\text{div}u^k \delta_{ij}\Big)\quad \text{for}\ i,\ j=1,2,3,
$$
 where $\delta_{ij}$ is the Kronecker symbol satisfying $\delta_{ij}=1,\ i=j$, and $\delta_{ij}=0$, otherwise.

We are now ready to estimate terms $J_{81}-J_{84} $ one by one. First we have
\begin{equation}\label{ya91}
\begin{split}
J_{81}=&-2 \int \sum_{i,j}\overline{\phi}^{k+1} \phi^{k+1}\partial_i a^{ij}_k\overline{u}^{k+1,j}
\leq  C |\nabla^2 u^k|_3 |\overline{\phi}^{k+1}|_2|\phi^{kQ+1}\overline{u}^{k+1}|_6\\
\leq & C |\nabla^2 u^k|_3 |\overline{\phi}^{k+1}|_2\left(| \phi^{k+1}\nabla \overline{u}^{k+1} |_2+|\nabla\phi^{k+1} |_\infty|\overline{u}^{k+1} |_2\right)\\
\leq& C\left(|\nabla \phi^{k+1}|^2_\infty |\nabla^2 u^k|^2_3 |\overline{\phi}^{k+1}|^2_2+|\overline{u}^{k+1}|^2_2+ |\nabla^2 u^k|^2_3 |\overline{\phi}^{k+1}|^2_2\right)+\frac{\alpha}{20}  | \phi^{k+1}\nabla \overline{u}^{k+1} |^2_2.
\end{split}
\end{equation}
Similarly,  for the terms $I_{82}$-$I_{83}$, using (\ref{qianru}), we get
\begin{equation}\label{ya92}
\begin{split}
J_{82}=&-2 \int \sum_{i,j}\overline{\phi}^{k+1} \phi^k \partial_i a^{ij}_k\overline{u}^{k+1,j}
\leq C|\phi^{k}\nabla^2 u^k|_\infty |\overline{\phi}^{k+1}|_2|\overline{u}^{k+1}|_2\\
\leq& C\left( \left(\|\nabla\phi^k\|_2\|\nabla^2u^k\|_1+|\phi^k\nabla^4 u^k|_2\right)^2   |\overline{\phi}^{k+1}|^2_2+|\overline{u}^{k+1}|^2_2\right),\\
J_{83}=&-2\int  \sum_{i,j} \overline{\phi}^{k+1}\phi^{k+1} a^{ij}_k\partial_i \overline{u}^{k+1,j}
\leq C| \overline{\phi}^{k+1}|_2 |\phi^{k+1}\nabla\overline{u}^{k+1}|_2|\nabla u^k|_\infty \\
\leq & C|\nabla u^k|^2_\infty| \overline{\phi}^{k+1}|^2_2+\frac{\alpha}{20}  |\phi^{k+1}\nabla\overline{u}^{k+1}|^2_2.
\end{split}
\end{equation}
For $J_{84}$, we have
\begin{equation}\label{ya94}
\begin{split}
J_{84}=&-2\int  \sum_{i,j} \overline{\phi}^{k+1}\phi^k a^{ij}_k\partial_i \overline{u}^{k+1,j}\\
=&-2\int  \sum_{i,j} \overline{\phi}^{k+1}(\phi^k-\phi^{k+1}+\phi^{k+1}) a^{ij}_k\partial_i \overline{u}^{k+1,j}\\
\leq &C\left(|\nabla u^k|_\infty |\nabla\overline{u}^{k+1}|_\infty|\overline{\phi}^{k+1}|^2_2+|\nabla u^k|_\infty|\phi^{k+1} \nabla\overline{u}^{k+1}|_2|\overline{\phi}^{k+1}|_2\right)\\
\leq &C\left(|\nabla u^k|_\infty |\nabla\overline{u}^{k+1}|_\infty|\overline{\phi}^{k+1}|^2_2+|\nabla u^k|^2_\infty|\overline{\phi}^{k+1}|^2_2\right)+\frac{\alpha}{20}  |\phi^{k+1} \nabla\overline{u}^{k+1}|^2_2.
\end{split}
\end{equation}
Combining (\ref{ya7})-(\ref{ya94}), we arrive at
\begin{equation}\label{ya95}
\begin{split}
J_8\leq &C\Big(|\overline{u}^{k+1}|^2_2+ (|\nabla \phi^{k+1}|^2_\infty+1)  |\nabla^2 u^k|^2_3 |\overline{\phi}^{k+1}|^2_2+|\nabla u^k|^2_\infty|\overline{\phi}^{k+1}|^2_2\\
&\quad+\big(\|\nabla\phi^k\|_2\|\nabla^2u^k\|_1+|\phi^k\nabla^4 u^k|_2\big)^2  |\overline{\phi}^{k+1}|^2_2+|\nabla u^k|_\infty |\nabla\overline{u}^{k+1}|_\infty|\overline{\phi}^{k+1}|^2_2\Big)\\
&+ \frac{\alpha}{5} |\phi^{k+1} \nabla\overline{u}^{k+1}|^2_2.
\end{split}
\end{equation}

As for the last term $J_9$, it is easy to show that
\begin{equation}\label{ya8}
\begin{split}
J_9=&2\int \nabla (\phi^k)^2\cdot (Q(u^{k})-Q(u^{k-1})) \cdot  \overline{u}^{k+1} \\
\leq&C |\nabla \phi^k|_\infty|\phi^k \nabla \overline{u}^k|_2| \overline{u}^{k+1}|_2
\leq  \frac{C}{\nu}  |\nabla \phi^k|^2_\infty | \overline{u}^{k+1}|^2_2+\nu |\phi^k \nabla \overline{u}^k|^2_2,
\end{split}
\end{equation}

In summary, using \eqref{ya1}-(\ref{ya4}) and (\ref{ya95})-(\ref{ya8}),  (\ref{zheng8}) implies  that
\begin{equation}\label{gogo1}\begin{split}
&\frac{d}{dt}|\overline{u}^{k+1}|^2_2+\alpha |\phi^{k+1}\nabla\overline{u}^{k+1} |^2_2\\
\leq& B^k_\nu(t)|\overline{u}^{k+1}|^2_2+B^k(t)|\overline{\phi}^{k+1}|^2_{2}
+\nu (|\phi^k \nabla \overline{u}^k|^2_2+ | \overline{u}^{k}|^2_2),
\end{split}
\end{equation}
for $0<\nu \leq \frac{1}{10}$ is a constant. Here $B^k_\nu(t)$ and $B^k(t)$ are given by
\begin{equation*}
\begin{split}
\displaystyle
B^k_\nu(t)=&C\Big(1+ |\nabla u^k|_\infty+\frac{1}{\nu} |\nabla u^k|^2_\infty+|\nabla\Phi^{k+1}|^2_\infty+|\nabla\phi^{k}|^2_\infty+|\nabla \phi^{k+1}|^2_\infty \\
\displaystyle
&+ (\|\nabla \phi^k\|_2\|u^k\|_3+|\phi^k \nabla^4 u^k|_2)^2+|\nabla\phi^{k+1} |^2_\infty|Lu^k|^2_3+\frac{1}{\nu}|\nabla \phi^k|^2_\infty\Big)\\
\displaystyle
 B^k(t)=&C\Big(1+|\nabla^2 u^k|^2_3+\big||\phi^{k+1}|+|\phi^k|\big|^{\frac{4\gamma-4\delta}{\delta-1}}_\infty+(\|\nabla \phi^k\|_2\|u^k\|_3+|\phi^k\nabla^4 u^k|_2)^2\\
&  +|\nabla \phi^{k+1}|^2_\infty |\nabla^2 u^k|^2_3+|\nabla u^k|^2_\infty+|\nabla u^k|_\infty |\nabla\overline{u}^{k+1}|_\infty\Big),
\end{split}
\end{equation*}
 satisfying   the following estimate
$$\int_0^t \big(B^k_\nu(s)+B^k(s)\big)\text{d}s\leq C+C_\nu t,\quad  t\in[0,T^{**}]. $$

Denote
\begin{equation*}\begin{split}
\Gamma^{k+1}(t)=&\sup_{s\in [0,t]}|\overline{\phi}^{k+1}(s)|^2_{2}+\sup_{s\in [0,t]}|\overline{u}^{k+1}(s)|^2_2.
\end{split}
\end{equation*}
From (\ref{go64}) and (\ref{gogo1}) we finally have
\begin{equation*}\begin{split}
&\frac{d}{dt}(|\overline{\phi}^{k+1}(t)|^2_{2}+|\overline{u}^{k+1}(t)|^2_2)+\alpha |\phi^{k+1}\nabla\overline{u}^{k+1} |^2_2\\
\leq& E^k_\nu \left(|\overline{\phi}^{k+1}(t)|^2_{2}+|\overline{u}^{k+1}(t)|^2_2\right)+ \nu \left(|\phi^k \nabla \overline{u}^k(t)|^2_2+| \overline{\phi}^{k}(t)|^2_2+ | \overline{u}^{k}(t)|^2_2\right),
\end{split}
\end{equation*}
for some $E^k_\nu$ satisfying  $\displaystyle\int_{0}^{t}E^k_\nu(s)\text{d}s\leq C+C_\nu t$.
Applying Gronwall's inequality, we have
\begin{equation*}\begin{split}
&\Gamma^{k+1}+\int_{0}^{t}\alpha|\phi^{k+1}\nabla\overline{u}^{k+1} |^2_2\text{d}s\\
\leq&   C\nu\int_{0}^{t}   \Big(|\phi^k \nabla \overline{u}^k|^2_2+| \overline{\phi}^{k}|^2_2+ | \overline{u}^{k}|^2_2\Big)\text{d}s\exp{(C+C_\nu t)}\\
\leq & C\nu \left(\int_{0}^{t}   |\phi^k \nabla \overline{u}^k|^2_2\text{d}s+t \sup_{s\in [0,t]} | \overline{\phi}^{k}(s)|^2_2+t \sup_{s\in [0,t]} | \overline{u}^{k}(s)|^2_2\right)\exp{(C_\nu t)}.
\end{split}
\end{equation*}
We choose $\nu>0$ and $T_*\in (0,\min (1,T^{**}))$ small enough such that
$$
4C\nu \leq  \min (\alpha, 1), \quad \text{and}\quad \text{exp}(C_\nu T_*)\leq 2.
$$
Therefore,  we achieve 
\begin{equation}\label{kexi}
\begin{split}
\sum_{k=1}^{\infty}\Big( \Gamma^{k+1}(T_*)+\int_{0}^{T_*} \alpha|\phi^{k+1}\nabla\overline{u}^{k+1} |^2_2\text{d}t\Big)\leq C<+\infty.
\end{split}
\end{equation}
This implies that our approximate solution sequence $(\phi^k, u^k)$ is a Cauchy sequence under the topology
of $L^\infty([0, T_*]; H^2(\mathbb{R}^3))$, and hence converges to a limit function $(\phi, u)$ there strongly. On the
other hand, since the uniform estimates in (\ref{jkk}) is independent of  $k$, we  know that  there exists a subsequence of solutions (still denoted by) $(\phi^k,  u^k)$, which    converges to a limit   $(\phi,  u)$ in the following  weak  or  weak* sense:
\begin{equation}\label{ruojixianqq}
\begin{split}
(\phi^k, u^k)\rightharpoonup  (\phi,u) \quad &\text{weakly* \ in } \ L^\infty([0,T_*];H^3(\mathbb{R}^3)),\\
\phi^k_t\rightharpoonup  \phi_t \quad &\text{weakly* \ in } \ L^\infty([0,T_*];H^2(\mathbb{R}^3)),\\
 u^k_t\rightharpoonup  u_t \quad &\text{weakly* \ in } \ L^\infty([0,T_*];H^1(\mathbb{R}^3)),\\
 u^k_t\rightharpoonup  u_t \quad &\text{weakly \ \  in } \ \ L^2([0,T_*];D^2(\mathbb{R}^3)),\\
\phi^k \nabla^4 u^k \rightharpoonup  \phi\nabla^4 u \quad & \ \text{weakly \ in } \ L^2([0,T_*]\times \mathbb{R}^3),
\end{split}
\end{equation}
which, from the  lower semi-continuity of norm for weak or weak$^*$ convergence, imply that the local estimates in (\ref{jkk}) still hold for  the limit function $(\phi,  u)$.

Now, it is easy to show that
$(\phi,  u)$ is a weak solution of (\ref{eq:cccq})-(\ref{sfanb1}) in  the sense of distribution  with the following regularities:
\begin{equation}\label{rjkqq}\begin{split}
& \phi-\overline{\phi} \in L^\infty([0,T_*];H^3),\quad  \phi_t \in L^\infty([0,T_*];H^2),\\
& u\in L^\infty([0,T_*]; H^3),\quad  \phi \nabla^4 u\in L^2([0,T_*];L^2), \\
& u_t \in L^\infty([0,T_*]; H^1)\cap L^2([0,T_*] ; D^2).
\end{split}
\end{equation}
The existence of  strong solutions is proved.\\

\noindent\underline{Step 2}: Uniqueness.   Let $(\phi_1,u_1)$ and $(\phi_2,u_2)$ be two strong solutions  to Cauchy problem (\ref{eq:cccq})-(\ref{sfanb1})  satisfying the uniform a priori estimates (\ref{jkk}). Denote
\begin{equation*}\begin{split}
&\overline{\varphi}=\phi_1-\phi_2,\quad \overline{u}=u_1-u_2,\\
&\overline{\Phi}=\Phi_1-\Phi_2=\phi^{\frac{2\gamma-\delta-1}{\delta-1}}_1-\phi^{\frac{2\gamma-\delta-1}{\delta-1}}_2.
\end{split}
\end{equation*}
It follows from (\ref{eq:cccq}) that $(\overline{\varphi},\overline{u})$ satisfies the following  system
 \begin{equation}
\label{zhuzhu}
\begin{cases}
\displaystyle
\ \overline{\varphi}_t+u_1\cdot \nabla\overline{\varphi} +\overline{u}\cdot\nabla\phi_{2}+\frac{\delta-1}{2}(\overline{\varphi} \text{div}u_2 +\phi_{1}\text{div}\overline{u})=0,\\[10pt]
\displaystyle
\ \overline{u}_t+ u_1\cdot\nabla \overline{u}+ \overline{u}\cdot \nabla u_{2}+\frac{2A\gamma}{\delta-1} (\Phi_1\nabla \overline{\varphi}+\overline{\Phi}\nabla \phi_2) \\[10pt]
=-(\phi^2_1L\overline{u}+\overline{\varphi}(\phi_1+\phi_2)Lu_2)\\[10pt]
\displaystyle
\quad +\nabla \phi^2_1(Q(u_1)-Q(u_2))+\nabla (\overline{\varphi}(\phi_1+\phi_2)) Q(u_2),
\end{cases}
\end{equation}
with zero initial data.
Let
$$
\Psi(t)=|\overline{\varphi}(t)|^2_{2}+|\overline{u}(t)|^2_2.
$$
Using the same arguments as in the derivation of (\ref{go64})-(\ref{gogo1}), we  have
\begin{equation}\label{gonm}
\begin{cases}
\displaystyle
\frac{d}{dt}\Psi(t)+C|\phi_1\nabla \overline{u}(t)|^2_2\leq F(t)\Psi (t),\\[10pt]
\displaystyle
\int_{0}^{t}F(s)ds\leq C\quad  \text{for} \quad 0\leq t\leq T_*.
\end{cases}
\end{equation}
From Gronwall's inequality, we conclude that $\overline{\varphi}=\overline{u}=0$.
Then the uniqueness is obtained.\\

\noindent\underline{Step 3}: Time-continuity. It can be proved by  the  same arguments as in the proof of Lemma \ref{lem1q}. We omit the details here.

\end{proof}

\subsection{Proof of  Theorem \ref{th2} and Corollary \ref{co2}}
Based on Theorem \ref{ths1}, we are now ready to prove the local existence of regular solution to the original Cauchy problem (\ref{eq:1.1})-(\ref{eq:2.211}). \\

\subsubsection{\bf Proof of Theorem \ref{th2}.}
\begin{proof}
For the initial data (\ref{th78}), we know from Theorem \ref{ths1} that
there exists  a  time $T_* > 0$ such that the problem (\ref{eq:cccq})-(\ref{sfanb1}) admits a unique strong solution $(\phi,u)$ satisfying the regularities in (\ref{reg11qq}), which means that
\begin{equation}\label{reg2}
\rho^{\frac{\delta-1}{2}}=\phi\in C^1(\mathbb{R}^3\times[0, T_*]).
\end{equation}
Noticing that  $\rho=\phi^{\frac{2}{\delta-1}}$, and
$\frac{2}{\delta-1}\geq 1$ for $ 1< \delta \leq  \min \Big(3,\frac{\gamma+1}{2}\Big) $, it is
easy to show that
$$\rho(x,t)\in C^1(\mathbb{R}^3\times[0, T_*]).$$

Now we verify that $(\rho,u)$ satisfies the original equations \eqref{eq:1.1}. Multiplying both sides of $(\ref{eq:cccq})_1$ by
$$
\frac{\partial \rho}{\partial \phi}(x,t)=\frac{2}{\delta-1}\phi^{\frac{3-\delta}{\delta-1}}(x,t)\in C(\mathbb{R}^3\times[0, T_*]),
$$
we get the continuity equation in $(\ref{eq:1.1})_1$.

Multiplying  both sides of $(\ref{eq:cccq})_2$ by
$$
\phi^{\frac{2}{\delta-1}}=\rho(x,t)\in C^1(\mathbb{R}^3\times[0, T_*]),
$$
we get the momentum equations in $(\ref{eq:1.1})_2$.
Therefore, $(\rho,u)$ is a solution to  (\ref{eq:1.1})-(\ref{eq:2.211}) in the  sense of distribution with the regularities shown in Definition \ref{d1}.

Recalling that  $\rho$ can be represented by the formula
$$
\rho(x,t)=\rho_0(U(0;x, t))\exp\Big(\int_{0}^{t}\textrm{div} u(s,U(s;x,t))\text{d}s\Big),
$$
where  $U\in C^1([0,T_*]\times\mathbb{R}^3\times[0, T_*])$ is the solution to the initial value problem
\begin{equation}
\label{eq:bb1}
\begin{cases}
\frac{d}{ds}U(s;x,t)=u(s,U(s;x,t)),\quad 0\leq s\leq T_*,\\[4pt]
U(t;x,t)=x, \quad\quad \quad  x\in \mathbb{R}^3, \quad 0\leq t\leq T_*,
\end{cases}
\end{equation}
 it is obvious that
$$
\rho(x,t)\geq 0, \ \forall (x,t)\in \mathbb{R}^3\times[0, T_*].
$$
In summary, the Cauchy problem (\ref{eq:1.1})-(\ref{eq:2.211}) has a unique regular solution $(\rho,u)$.
\end{proof}

\subsubsection{\bf Proof of Corollary \ref{co2}}.
\begin{proof} First, from  $1< \delta \leq \frac{5}{3}$  we  know that  $\frac{2}{\delta-1}\geq 3$. Since
$$\phi-\overline{\phi} \in C([0,T_*];H^{3})\cap C^1([0,T_*]; H^{2}),$$ and
$$
\rho(x,t)=\phi^{\frac{2}{\delta-1}}(x,t),
$$
we have
$$
\rho-\overline{\rho} \in C([0,T_*];H^{3}).
$$

Second,  due to  the fact that
\begin{equation}\label{ze1}
\begin{split}
&  \rho^{\frac{\delta-1}{2}} -\overline{\rho}^{\frac{\delta-1}{2}}\in C([0,T_*]; H^3),\ \ u\in C([0,T_*]; H^{s'}) \cap L^\infty([0,T_*]; H^3),\\
&\rho^{\frac{\delta-1}{2}} \nabla^4 u \in L^2([0,T_*]; L^2),\ \ u_t \in C([0,T]; H^1)\cap L^2([0,T] ; D^2),
\end{split}
\end{equation}
for any constant  $s' \in[2,3)$, and the same  arguments as   in Lemma \ref{lem1q} for the time continuity,  we deduce that
\begin{equation}\label{ze2}
\rho \text{div}u \in L^2([0,T_*]; H^3) \cap C([0,T_*]; H^2).
\end{equation}

At last,
with the aid of the continuity equation
$\rho_t+u \cdot\nabla \rho+\rho\text{div} u=0$
and (\ref{ze1})-(\ref{ze2}),   it is clear that $$
\rho-\overline{\rho} \in C([0,T_*];H^{3})\cap C^1([0,T_*];H^{2}).
$$

Furthermore, when  $\delta=2$ and  $\gamma\geq 3$,  by the same token, the regularity of $\rho$ in these cases can be achieved. \end{proof}

\section{Formation of singularities}

In this section, we consider the formation of singularities of regular solutions obtained in Section $3$. Two
classes of  initial data that lead to finite time blow-up  will be given. We assume that  $(\rho, u)(x,t)$ is the regular solution in  $\mathbb{R}^3\times[0, T_m)$ obtained in Theorem \ref{th2}, with $T_m$ the maximal existence time.

\subsection{Blow-up by isolated mass group} The first kind of singularity formation is
driven by isolated mass group, defined in Definition \ref{local}. Assume that the initial data $(\rho_0, u_0)$ have an isolated mass group $(A_0, B_0)$, the following definition helps to track the evolution of $A_0$ and $B_0$.
\begin{definition}[\textbf{Particle path and flow map}]\label{kobn}\ Let $x(t;x_0)$ be the particle path starting from $x_0$ at $t=0$, i.e.,
\begin{equation}\label{gobn}
\frac{d}{\text{d}t}x(t;x_0)=u(x(t;x_0),t),\quad x(0;x_0)=x_0.
\end{equation}
Let $A(t)$, $B(t)$, $B(t)\setminus A(t)$ be the images of $A_0$, $B_0$, and $B_0 \setminus A_0$, respectively, under the flow map of (\ref{gobn}), i.e.,
\begin{equation*}
\begin{split}
&A(t)=\left\{x(t;x_0)|x_0\in A_0\right\},\\[2pt]
& B(t)=\left\{x(t;x_0)|x_0\in B_0\right\},\\[2pt]
&B(t)\setminus A(t)=\left\{x(t;x_0)|x_0\in B_0\setminus  A_0\right\}.
\end{split}
\end{equation*}
\end{definition}

The following lemma confirms the invariance of  the volume $|A(t)|$ for regular solutions.
\begin{lemma}
\label{lemma:3.1}
Suppose that  the initial data $(\rho_0,u_0)(x)$ have an isolated mass group $(A_0,B_0)$,
then for the   regular solution $(\rho,u)(x,t)$ on $\mathbb{R}^3\times[0,T_m)$ to the Cauchy problem (\ref{eq:1.1})-(\ref{eq:2.211}), we have
$$
|A(t)|=|A_0|, \quad  t\in [0,T_m).
$$
\end{lemma}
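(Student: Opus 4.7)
The strategy is to show that the flow of $u$ translates $\partial A_0$ \emph{rigidly} with the constant velocity $\overline{u}_0$, so that $A(t) = A_0 + t\overline{u}_0$, from which $|A(t)| = |A_0|$ is immediate. The argument exploits the hyperbolic structure $u_t + u\cdot\nabla u = 0$ satisfied by the regular solution at vacuum points (condition (D) of Definition \ref{d1}), together with the fact that the initial vacuum surrounds $A_0$ inside $B_0$.

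First I would record that $\overline{A}_0 \subset B_0$ implies $\partial A_0 \subset B_0 \setminus A_0$, so that $\rho_0 \equiv 0$ on $\partial A_0$. By the mass representation formula established at the end of the proof of Theorem \ref{th2},
\begin{equation*}
\rho(x(t;x_0),t) = \rho_0(x_0)\exp\Big(\int_0^t \dd u(s, x(s;x_0))\,\dif s\Big),
\end{equation*}
so every trajectory issuing from $x_0 \in \partial A_0$ remains in the vacuum set $\{\rho = 0\}$ on $[0,T_m)$. Condition (D) of Definition \ref{d1} then yields
\begin{equation*}
\frac{d}{dt}\,u(x(t;x_0),t) = (u_t + u\cdot\nabla u)(x(t;x_0),t) = 0,
\end{equation*}
so $u(x(t;x_0),t) \equiv u_0(x_0) = \overline{u}_0$ for every $x_0 \in \partial A_0$. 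Substituting back into the ODE \eqref{gobn} gives $x(t;x_0) = x_0 + t\overline{u}_0$ for all $x_0 \in \partial A_0$, i.e.\ $\partial A_0$ is rigidly translated by the flow.

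To conclude, I would invoke the regularity $u \in L^\infty([0,T_m);H^3) \hookrightarrow L^\infty([0,T_m);C^1)$ (so the flow map $\Phi_t(x_0) := x(t;x_0)$ is a $C^1$-diffeomorphism of $\mathbb{R}^3$) together with the fact that $\Phi_t$ maps boundaries to boundaries. Since $A_0$ is a smooth bounded connected open set, so is $A(t) = \Phi_t(A_0)$, and $\partial A(t) = \Phi_t(\partial A_0) = \partial A_0 + t\overline{u}_0 = \partial(A_0 + t\overline{u}_0)$. Two bounded connected open sets with the same smooth boundary must coincide, giving $A(t) = A_0 + t\overline{u}_0$ and hence $|A(t)| = |A_0|$. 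The main subtlety is justifying that the rigid motion of $\partial A_0$ forces the rigid motion of the whole domain $A_0$; this is purely a topological/ODE fact, relying on $\Phi_t$ being a diffeomorphism, and is the only nontrivial bookkeeping step in the proof.
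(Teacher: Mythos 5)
Your proposal is correct and follows essentially the same route as the paper: the trajectories from $\partial A_0$ stay in the vacuum by the representation formula for $\rho$, condition (D) freezes the velocity there at $\overline{u}_0$, so the boundary is rigidly translated and the enclosed volume is unchanged. The only difference is cosmetic: the paper shows the difference of any two boundary trajectories is constant in time and leaves the topological step implicit, while you make explicit that the flow map is a diffeomorphism carrying $\partial A_0$ to $\partial A(t)$ — a harmless (and arguably welcome) extra level of detail.
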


\begin{proof}
Since
$$
\rho(x(t;x_0),t)=\rho_0(x_0)\exp\Big(\int_{0}^{t}\textrm{div} u(x(s;x_0), s)\text{d}s\Big),
$$
it is clear that
$$ \rho \equiv 0, \quad \text{in } \quad B(t)\setminus A(t).$$
From    the definition of  regular solutions, we have
\begin{equation}\label{eq:5.3}
u_t+u\cdot\nabla u=0, \quad \text{in} \quad   B(t)\setminus A(t).
\end{equation}
Therefore, $u$ is invariant along the particle path $x(t;x_0)$ with $x_0\in B_0 \setminus A_0$.

For any $x^1_0,\ x^2_0 \in \partial A_0$, we define
\begin{equation}\label{gobn1}
\frac{d}{\text{d}t}x^i(t;x^i_0)=u(x^i(t;x^i_0), t),\quad x^i(0;x^i_0)=x^i_0,\quad \text{for} \quad i=1,2.
\end{equation}
Then we  have
\begin{equation}\label{gobn2}
\frac{d}{\text{d}t}(x^1(t;x^1_0)-x^2(t;x^2_0))=u(x^1(t;x^1_0), t)-u(x^2(t;x^2_0), t)=0,
\end{equation}
which implies that
$$
|A(t)|=|A_0|, \quad  t\in [0,T_m].
$$
\end{proof}

We point out that, although the volume of $A(t)$ is invariant, the vacuum boundary $\partial A(t)$ can vary as time evolves. In order to deal with this situation,  the  following well-known Reynolds transport theorem (c.f. \cite{kong}) is useful.
\begin{lemma}\label{3.1}
For any $G(x,t)\in C^1(\mathbb{R}^3\times\mathbb{R}^+) $, one has
$$
\frac{d}{\text{d}t}\int_{A(t)}  G(x,t)\text{d}x= \int_{A(t)}  G_t(x,t)\text{d}x+\int_{\partial A(t)} G(x,t)(u(x,t)\cdot {\vec n})\text{d}S,
$$
where ${\vec n}$ is the outward unit normal vector to $\partial A(t)$, and $u$ is the velocity of the fluid.
\end{lemma}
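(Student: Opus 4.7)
The plan is to pull back the time-varying integral to the fixed reference domain $A_0$ via the flow map from \eqref{gobn}, differentiate under the integral sign on the fixed domain, and then push forward again via the divergence theorem. Let $\Phi_t(x_0):=x(t;x_0)$ denote the flow map and $J(t;x_0):=\det(\nabla_{x_0}\Phi_t)$ its Jacobian. The regularity of regular solutions (Definition \ref{d1}, together with Remark \ref{re3}) makes $u$ at least $C^1$ in space-time on the relevant set, so $\Phi_t$ is a $C^1$-diffeomorphism of $A_0$ onto $A(t)$ with $J>0$.

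First I would perform the change of variables $x=\Phi_t(x_0)$:
\begin{equation*}
\int_{A(t)} G(x,t)\,\text{d}x=\int_{A_0} G(\Phi_t(x_0),t)\,J(t;x_0)\,\text{d}x_0.
\end{equation*}
Since $A_0$ is $t$-independent, I may differentiate under the integral. Using $\partial_t\Phi_t=u\circ\Phi_t$, the chain rule gives $\partial_t\bigl[G\circ\Phi_t\bigr]=(G_t+u\cdot\nabla G)\circ\Phi_t$, while Jacobi's formula yields the classical Liouville identity $\partial_t J=(\text{div}\,u)\circ\Phi_t\cdot J$. Collecting,
\begin{equation*}
\frac{d}{\text{d}t}\int_{A(t)} G\,\text{d}x=\int_{A_0}\bigl(G_t+u\cdot\nabla G+G\,\text{div}\,u\bigr)(\Phi_t(x_0),t)\,J(t;x_0)\,\text{d}x_0,
\end{equation*}
and changing variables back to $A(t)$ turns this into $\int_{A(t)}\bigl(G_t+\text{div}(Gu)\bigr)\,\text{d}x$. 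The classical divergence theorem converts the second term into $\int_{\partial A(t)} G(u\cdot\vec n)\,\text{d}S$, yielding the claimed identity.

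The main technical point is justifying the divergence theorem on the moving domain $A(t)$. Since $A_0$ is a smooth bounded open set by Definition \ref{local} and $\Phi_t$ is a $C^1$-diffeomorphism, $A(t)=\Phi_t(A_0)$ remains a bounded Lipschitz (in fact $C^1$) domain for every $t\in[0,T_m)$, which is enough for the classical divergence theorem. No property of the Navier--Stokes system beyond the $C^1$-regularity of $u$ enters the proof, so the identity is purely kinematic and is consistent with its quotation here as a standard result from \cite{kong}.
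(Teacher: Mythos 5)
Your proof is correct. The paper itself offers no proof of this lemma --- it is quoted as the well-known Reynolds transport theorem with a citation to \cite{kong} --- and your argument is exactly the standard derivation: pull back to the fixed reference domain $A_0$ via the flow map, apply Liouville's formula $\partial_t J=(\text{div}\,u)\circ\Phi_t\,J$, change variables back, and invoke the divergence theorem. Your remarks on the hypotheses are also consistent with the paper's setting: $A_0$ is smooth and bounded by Definition \ref{local}, and the regularity class of regular solutions gives $(u,\nabla u)\in C(\mathbb{R}^3\times[0,T_m))$, which is enough for $\Phi_t$ to be a $C^1$-diffeomorphism and for the divergence theorem on $A(t)$.
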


In the rest part of this section, we will use the following useful physical  quantities on the fluids in $A(t)$:
\begin{align*}
&m(t)=\int_{A(t)}\rho(x,t)\text{d}x \quad \textrm{(total mass)},\\
&M(t)=\int_{A(t)} \rho(x,t)|x|^{2}\text{d}x \quad \textrm{ (second moment)},\\
&F(t)=\int_{A(t)} \rho(x,t)u(x,t)\cdot x \text{d}x \quad \textrm{ (radial component of momentum)},\\
&\varepsilon(t)=\int_{A(t)} \Big(\frac{1}{2}\rho |u|^2+\frac{P}{\gamma-1}\Big)(x,t) \text{d}x \quad  \textrm{(total  energy)}.
\end{align*}

From the continuity equation, it is clear that the mass is conserved.
\begin{lemma}\label{3.2}
 Suppose that  the initial data $(\rho_0,u_0)(x)$ have  an isolated mass group $(A_0,B_0)$,
then for the   regular solution $(\rho,u)(x, t)$ on $\mathbb{R}^3\times  [0,T_m)$ to the Cauchy problem (\ref{eq:1.1})-(\ref{eq:2.211}), we have
$$m(t)=m(0),\quad \text{for} \quad t\in [0,T_m).
$$
\end{lemma}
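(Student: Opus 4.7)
The plan is to apply the Reynolds transport theorem (Lemma 3.1) to $G = \rho$ and combine it with the continuity equation $(\ref{eq:1.1})_1$, recovering the classical ``mass in a material volume is conserved'' identity. Since the regular solution satisfies $\rho \in C^1(\mathbb{R}^3 \times [0,T_m))$ and, by Sobolev embedding from $u \in C([0,T_m); H^{s'})$ with $s' \in [2,3)$, the velocity field is Lipschitz in space, the flow map of (\ref{gobn}) is a $C^1$ diffeomorphism that carries the smooth bounded set $A_0$ to a smooth bounded set $A(t)$ with $\partial A(t)$ the image of $\partial A_0$. This gives enough regularity to legitimately apply both Reynolds transport and the divergence theorem on $A(t)$.

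First, I would set $G(x,t) = \rho(x,t)$ in Lemma 3.1 to obtain
\begin{equation*}
\frac{d}{dt} m(t) = \int_{A(t)} \rho_t(x,t)\,\mathrm{d}x + \int_{\partial A(t)} \rho(x,t)\,\bigl(u(x,t)\cdot \vec n\bigr)\,\mathrm{d}S.
\end{equation*}
Next, using the continuity equation $\rho_t + \mathrm{div}(\rho u) = 0$ from $(\ref{eq:1.1})_1$, the volume integral becomes $-\int_{A(t)} \mathrm{div}(\rho u)\,\mathrm{d}x$. Applying the divergence theorem on the smooth bounded domain $A(t)$ yields
\begin{equation*}
-\int_{A(t)} \mathrm{div}(\rho u)\,\mathrm{d}x = -\int_{\partial A(t)} \rho\,(u\cdot \vec n)\,\mathrm{d}S,
\end{equation*}
which cancels exactly with the boundary term from Reynolds transport, giving $\frac{d}{dt}m(t) = 0$. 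Integration in time from $0$ to $t$ then gives $m(t) = m(0)$.

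As a sanity check consistent with the isolated-mass structure, the transport representation for $\rho$ used in the proof of Theorem \ref{th2} shows that $\rho \equiv 0$ on $\partial A(t)$ for all $t \in [0, T_m)$, because every particle starting on $\partial A_0 \subset B_0 \setminus A_0$ has $\rho_0 = 0$ at its initial position. This means both surface integrals above vanish individually, not just in combination, which is reassuring but not strictly needed for the argument.

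I do not expect any essential obstacle here: the only delicate point is making sure the divergence theorem and Reynolds transport apply, which is guaranteed by the $C^1$ regularity of the flow map (hence of $\partial A(t)$) together with the $C^1$ regularity of $\rho u$ coming from the regular solution class in Definition \ref{d1}. The slight subtlety that $A(t)$ is time-dependent is precisely what Reynolds transport is designed to handle, so the cancellation between the volume and surface contributions is automatic.
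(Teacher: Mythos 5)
Your proof is correct and follows essentially the same route as the paper: Reynolds transport applied to $G=\rho$, substitution of the continuity equation, and the divergence theorem to cancel (or individually annihilate, since $\rho\equiv 0$ on $\partial A(t)$) the boundary terms. The paper's own proof is exactly this computation, so no further comment is needed.
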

\begin{proof} From $(\ref{eq:1.1})_1$ and Lemma \ref{3.1}, direct computation shows
\begin{equation*}\begin{split}
\frac{d}{dt}m(t)=&\int_{A(t)} \rho_t\  \text{d}x+\int_{\partial A(t)} \rho u\cdot {\vec n}\ \text{d}S\\
=&\int_{A(t)} -\text{div}(\rho u)\  \text{d}x=\int_{\partial A(t)} -\rho u\cdot {\vec n}\ \text{d}S=0,
\end{split}
\end{equation*}
which implies that $m(t)=m(0)$.

\end{proof}

Motivated by \cite{zx},  we define the following functional
\begin{equation} \label{cd:3}
\begin{split}
I(t)&=M(t)-2(t+1)F(t)+2(t+1)^{2}\varepsilon(t)\\
&=\int_{A(t)} |x-(t+1)u|^{2}\rho\text{d}x +\frac{2}{\gamma-1}(t+1)^{2}\int_{A(t)} P\text{d}x.
\end{split}
\end{equation}

We now follow the arguments of \cite{zx} with some proper modifications to prove Theorem \ref{coo2}.
One of the key observations in the following proof is  that the viscosity tensor $\mathbb{T }=0$ in vacuum region
due to \eqref{eq:1.3}.

\begin{proof}[{\bf Proof of Theorem 1.2}] From system (\ref{eq:1.1}), it is clear that
\begin{equation} \label{eq:5.8}
\begin{split}
 \left( \frac{1}{2}\rho|u|^2+\frac{P}{\gamma-1}\right)_t&=-\text{div}\left(\frac{1}{2}\rho|u|^2u\right)-\frac{\gamma}{\gamma-1}\text{div} (P u)+u\cdot \text{div}\mathbb{T }.
\end{split}
\end{equation}
From the continuity equation $(\ref{eq:1.1})_1$, momentum equations $(\ref{eq:1.1})_2$, relation (\ref{eq:5.8}), Lemma \ref{3.1} and integration by parts, we have
\begin{equation} \label{eq:5.6}
\begin{split}
\frac{d}{\text{d}t}I(t)=&\frac{d}{dt}M(t)-2(t+1)\frac{d}{dt}F(t)+2(t+1)^{2}\frac{d}{dt}\varepsilon(t)-2F(t)+4(t+1)\varepsilon(t)\\
\displaystyle
=&\frac{2}{\gamma-1}(2-3(\gamma-1))(t+1)\int_{A(t)} P \text{d}x+\overline{J}_1+\overline{J}_2,
\end{split}
\end{equation}
where  $\overline{J}_1$ and $\overline{J}_2$ are given by
\begin{equation*}
\overline{J}_1=-2(t+1)\int_{A(t)} x \cdot \text{div}\mathbb{T } \text{d}x,\ \ \overline{J}_2=2(t+1)^2 \int_{A(t)} u\cdot \text{div}\mathbb{T }\text{d}x.
\end{equation*}

Since
\begin{equation}\label{haoq}
\begin{split}
\text{div} (x\cdot \mathbb{T })
=x\cdot (\text{div}\mathbb{T }) +\sum_{i=1}^{3}\mathbb{T }_{ii}=x \cdot (\text{div} \mathbb{T})+3\left(\beta+\frac{2}{3} \alpha\right)\rho^\delta\text{div}u.
\end{split}
\end{equation}
Integrating (\ref{haoq}) by parts, with help of the fact $\mathbb{T }=0$ on $\partial A(t)$, we have
\begin{equation} \label{eq:3.511}
\overline{J}_1=-2(t+1)\int_{A(t)} x \cdot \text{div} \mathbb{T}\text{d}x=6(t+1)\int_{A(t)}\left(\beta+\frac{2}{3}\alpha\right) \rho^\delta \text{div}u\text{d}x.\end{equation}

Now we turn to $\overline{J}_2$.
From (\ref{fandan}) and Cauchy's inequality we have
\begin{equation}\label{eq:5.9}
\begin{split}
 \text{div} (u \mathbb{T })
=&u\cdot \text{div}\mathbb{T }+2\mu(\rho)\sum_{i=1}^{3}(\partial_{i}u_{i})^{2}+\mu(\rho)  \sum_{i\neq j}^{3} (\partial_{i}u_{j})^2\\
&\quad+2\mu(\rho) \sum_{i>j} (\partial_{i}u_{j})(\partial_{j}u_{i})+\lambda(\rho) \left(\sum_{i=1}^{3}\partial_{i}u_{i}\right)^{2}\\
\geq& u\cdot \text{div}\mathbb{T }+\left(\lambda(\rho)+\frac{2}{3}\mu(\rho)\right)(\text{div} u)^2\\
=&u\cdot \text{div}\mathbb{T }+\left(\beta+\frac{2}{3}\alpha\right)\rho^\delta (\text{div} u)^2.
\end{split}
\end{equation}
We integrate (\ref{eq:5.9}) over $A(t)$ to get
\begin{equation}\label{eq:5.10}
\int_{A(t)} u\cdot \text{div}\mathbb{T }\text{d}x\leq -\int_{A(t)}\left(\beta+\frac{2}{3}\alpha\right)\rho^\delta (\text{div} u)^2\text{d}x,
\end{equation}
from which we have
\begin{equation}\label{k9}
\overline{J}_2\leq -2(t+1)^2\int_{A(t)}\left(\beta+\frac{2}{3}\alpha\right)\rho^\delta (\text{div} u)^2\text{d}x.
\end{equation}

From Lemma \ref{lemma:3.1}, along with (\ref{eq:5.6}), (\ref{eq:3.511}) and (\ref{k9}), for $0\leq t\leq  T_m$, we get
\begin{equation}\label{eq:5.13}
\begin{split}
\frac{d}{\text{d}t}I(t)\leq & \frac{2}{\gamma-1}(2-3(\gamma-1))(t+1)\int_{A(t)}P\text{d}x
\\
&-2(t+1)^2\int_{A(t)}\left(\beta+\frac{2}{3}\alpha\right)\rho^\delta(\text{div} u)^2\text{d}x\\
&+6(t+1)\int_{A(t)} \left(\beta+\frac{2}{3}\alpha\right)\rho^\delta \text{div}u\text{d}x.
\end{split}
\end{equation}
Since $\delta\le \gamma$, with the help of Cauchy's inequality and Young's inequality, we have
\begin{equation} \label{11111}
\begin{split}
&-2(t+1)^2\int_{A(t)}\rho^\delta(\text{div} u)^2\text{d}x+6(t+1)\int_{A(t)} \rho^\delta \text{div}u\text{d}x \\
\leq & -2(t+1)^2\int_{A(t)}\rho^\delta(\text{div} u)^2\text{d}x+2(t+1)^2\int_{A(t)}\rho^\delta(\text{div} u)^2\text{d}x+18\int_{A(t)}\rho^\delta \text{d}x\\
\leq & 18\int_{A(t)}\rho^\delta  \text{d}x\leq \frac{18\delta}{\gamma}\int_{A(t)}\rho^\gamma \text{d}x+\frac{18(\gamma-\delta)}{\gamma}|A_{0}|.
\end{split}
\end{equation}
We deduce from (\ref{eq:5.13}) that
\begin{equation}\label{eq:3.1331}
\begin{split}
\frac{d}{\text{d}t}I(t) \leq & \frac{2}{\gamma-1}(2-3(\gamma-1))(t+1)\int_{A(t)}P\text{d}x \\
&+18\left(\beta+\frac{2}{3}\alpha\right)\frac{\delta}{\gamma}\int_{A(t)}\rho^\gamma \text{d}x+18\left(\beta+\frac{2}{3}\alpha\right)\frac{\gamma-\delta}{\gamma}|A_{0}|.
\end{split}
\end{equation}
From the second  expression of $I(t)$ in (\ref{cd:3}), one has
\begin{equation}\label{eq:3.131}
\begin{split}
\frac{2-3(\gamma-1)}{t+1}I(t)=&\frac{2-3(\gamma-1)}{t+1}\int_{A(t)} |x-(t+1)u|^{2}\rho\text{d}x \\
&+\frac{2}{\gamma-1}(2-3(\gamma-1))(t+1)\int_{A(t)} P\text{d}x.
\end{split}
\end{equation}

In the case when $1< \gamma < \frac{5}{3}$, from  (\ref{eq:3.1331})-(\ref{eq:3.131}), for $0\leq  t<  T_m$, we have
\begin{equation}\label{eq:3.141}
\frac{d}{\text{d}t}I(t) \leq \frac{2-3(\gamma-1)}{t+1} I(t)+18\left(\beta+\frac{2}{3}\alpha\right)\frac{\delta(\gamma-1)}{2A\gamma(t+1)^2}I(t)+18\left(\beta+\frac{2}{3}\alpha\right)\frac{\gamma-\delta}{\gamma}|A_{0}|.
\end{equation}
Solving (\ref{eq:3.141}) directly, we get
\begin{equation}\label{k1}
\begin{split}
I(t) \leq & (t+1)^{2-3(\gamma-1)} e^{-\frac{a_1}{t+1}} \left( e^{a_1} I(0)+a_2 \int_{0}^{t}(\tau+1)^{3(\gamma-1)-2} e^{  \frac{a_1}{\tau+1} } \text{d} \tau \right),
\end{split}
\end{equation}
where
$$
a_1=18\left(\beta+\frac{2}{3}\alpha\right)\frac{\delta(\gamma-1)}{2A\gamma},\quad  a_2=18\left(\beta+\frac{2}{3}\alpha\right)\frac{\gamma-\delta}{\gamma}|A_{0}|.
$$
If $3(\gamma-1)-2 \neq -1$, then from (\ref{k1}) we get
\begin{equation}\label{eq:3.ode1}
\begin{split}
I(t)
\leq & (t+1)^{2-3(\gamma-1)}e^{-\frac{a_1}{t+1}}\left(e^{a_1}I(0)-\frac{a_2e^{a_1}}{3(\gamma-1)-1}\right)
 +\frac{a_2(t+1)}{ 3(\gamma-1)-1} e^{-\frac{a_1}{t+1}} e^{a_1}\\
\displaystyle
\leq & C\left(t^{2-3(\gamma-1)}+ t+1\right), \quad  \text{for} \ t\in [0,T_m).
\end{split}
\end{equation}
If $3(\gamma-1)-2=-1$, from (\ref{k1}) we get
\begin{equation}\label{k2}
\begin{split}
I(t) \leq & (t+1)^{2-3(\gamma-1)}e^{-\frac{a_1}{t+1}} \left( e^{a_1} I(0)+a_2e^{a_1}\ln(t+1) \right)\\
\displaystyle
 \leq & C\left((t+1)\ln(t+1)+t+1\right), \quad \text{for} \ t\in [0,T_m).
\end{split}
\end{equation}
On the other hand, from  the definition of $I(t)$, Jensen's inequality and Lemma \ref{lemma:3.1}, we show that
\begin{equation}\label{ode:3.21}
\begin{split}
I(t)
\geq&  \frac{2(t+1)^2}{\gamma-1} |A_{0}|\int_{A(t)} A\rho^\gamma(x,t)\frac{\text{d}x}{|A(t)|} \\
\geq & \frac{C(t+1)^2}{\gamma-1} |A_{0}|^{1-\gamma} m(0)^\gamma
\geq C_0 (1+t)^2 ,
\end{split}
\end{equation}
where $C_0>0$ is a constant and  we used the fact in Lemma \ref{3.2} that
$$m(t)= \int_{A(t)} \rho(x,t) \text{d}x =\int_{A_0}\rho_0(x)\text{d}x=m(0).$$
Then $T_m<+\infty$ follows immediately, otherwise a contradiction forms between (\ref{ode:3.21}) and (\ref{eq:3.ode1}) or \eqref{k2}.

In the case when $ \frac{5}{3}\leq \gamma < +\infty $, thus $2-3(\gamma-1)\leq 0$, from (\ref{eq:3.1331}) we have
\begin{equation}\label{eq:3.1335}
\begin{split}
\frac{d}{\text{d}t}I(t) \leq & 18\left(\beta+\frac{2}{3}\alpha\right) \frac{\delta}{\gamma}\int_{A(t)}\rho^\gamma \text{d}x+18\left(\beta+\frac{2}{3}\alpha\right)\frac{(\gamma-\delta)}{\gamma}|A_{0}|\\
\leq &18\left(\beta+\frac{2}{3}\alpha\right)\frac{\delta(\gamma-1)}{2\gamma A (t+1)^2}I(t)+18\left(\beta+\frac{2}{3}\alpha\right)\frac{\gamma-\delta}{\gamma}|A_{0}|\\
&=\frac{a_1}{(t+1)^2}I(t)+a_2.
\end{split}
\end{equation}

and therefore,
\begin{equation}
\begin{split}
I(t)&\le e^{a_1}I(0)e^{-\frac{a_1}{t+1}}+a_2e^{a_1} e^{-\frac{a_1}{t+1}}e^{a_1}t\\
    &\le e^{a_1}(I(0)+a_2t).
\end{split}
\end{equation}

Again, this and \eqref{ode:3.21} imply that $T_m<\infty$. We complete the proof of Theorem 1.2.
\end{proof}

\subsection{Blow-up by hyperbolic singularity set}

The mechanism for our second finite time blow-up result comes from the nonlinear hyperbolic structure (see (\ref{zhenkong})) which controls the behavior of the velocity $u$ in the vacuum region. Assume that the initial data $(\rho_0, u_0)(x)$ have a hyperbolic singularity set $V$, see Definition \ref{bugers}.
\begin{proof}[{\bf Proof of Theorem 1.3}]
Let $ V(t) $ be the image of $ V$ under the flow map of (\ref{gobn}), i.e.,
\begin{equation}\label{zhi}
V(t)=\{x(t;\xi_0)| \xi_0 \in V\}.
\end{equation}

      It follows from the continuity equation $(\ref{eq:1.1})_1$ that the  density is simply transported along the particle path, so
       $$ \rho(x,t)=0,\quad \text{when} \quad x\ \in \ V(t).$$
From the Definition \ref{d1} for regular solutions, we have
\begin{equation}
\label{eq:1.2guogai}
u_t+u\cdot \nabla u=0, \quad \text{when} \quad x\ \in \ V(t),
\end{equation}
which means that $u$ is a constant  vector along the particle path $x(t; \xi_0)$ and
$$
\xi_0=x-tu(x,t)\in V.
$$
Then for any $x\in V(t)$, we have
$$
u(x,t)=u_0(\xi_0)=u_0(x-tu(x,t)),
$$
which  implies that
\begin{equation}
\label{eq:1.2fan}
\begin{split}
\nabla u(x,t)=&\big(\mathbb{I}_3+t\nabla u_0(x-tu(x,t))\big)^{-1}\nabla u_0(x-tu(x,t))\\
=&\big(\mathbb{I}_3+t\nabla u_0(\xi_0)\big)^{-1}\nabla u_0(\xi_0),\quad \text{for} \quad x\in V(t).
\end{split}
\end{equation}
According to the definition of the hyperbolic singularity set, there exists some
$$\xi_0 \in V,\quad \text{and} \quad  l_{\xi_0} \in  Sp(\nabla u_0 (\xi_0)) \quad  \text{satisfying} \quad  l_{\xi_0}<0.$$ Let $w \in \mathbb{R}^3$ be the eigenvector of $\nabla u_0(\xi_0)$ with respect to  $l_{\xi_0}$, that is,
$$
\nabla u_0(\xi_0)w= l_{\xi_0} w.
$$
It is clear that
$$
\big(\mathbb{I}_3+t\nabla u_0(\xi_0)\big)^{-1}w=\big(1+t l_{\xi_0}\big)^{-1} w.
$$
Thus we know that  the matrix $\nabla u(x,t)$  has an eigenvector $w$ with the eigenvalue
$$
\frac{l_{\xi_0}}{1+tl_{\xi_0}},
$$
which, along with $l_{\xi_0}<0$,   implies   that the quantity $\nabla u$ will blow up in finite time, i.e.,
$$
T_m<+\infty.
$$
This completes the proof of Theorem 1.3.
\end{proof}

\section{Appendix: {Proof for the Remark \ref{re3}}  }

In this section,  we will show that the regular solution that we obtained in Theorem \ref{th2} is indeed a classical one in $(0, T_*]$.  The following lemma will be used in our proof.
\begin{lemma}\cite{bjr}\label{1}
If $f(x,t)\in L^2([0,T]; L^2)$, then there exists a sequence $s_k$ such that
$$
s_k\rightarrow 0, \quad \text{and}\quad s_k |f(x,s_k)|^2_2\rightarrow 0, \quad \text{as} \quad k\rightarrow+\infty.
$$
\end{lemma}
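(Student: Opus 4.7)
The plan is to argue by contradiction, exploiting the integrability in time of $s\mapsto |f(\cdot,s)|_2^2$ on $[0,T]$. First I would set $g(s):=|f(\cdot,s)|_2^2$, so the hypothesis $f\in L^2([0,T];L^2)$ reads $g\in L^1([0,T])$, while the conclusion asks for a sequence $s_k\downarrow 0$ with $s_k g(s_k)\to 0$. The intuition is that $g$ cannot behave like $c/s$ in any one-sided neighborhood of the origin without destroying its Lebesgue integrability there, so some sequence along which $sg(s)$ decays to zero must exist.

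The central step is a direct contradiction argument. Suppose the conclusion fails: then $\liminf_{s\to 0^+} sg(s)>0$, so there exist $c>0$ and $s_0\in(0,T)$ with $sg(s)\geq c$ for every $s\in(0,s_0)$. Equivalently $g(s)\geq c/s$ on $(0,s_0)$, whence
$$\int_0^{s_0} g(s)\,\mathrm{d}s \;\geq\; c\int_0^{s_0}\frac{\mathrm{d}s}{s}\;=\;+\infty,$$
contradicting $g\in L^1([0,T])$. Therefore $\liminf_{s\to 0^+} sg(s)=0$, and a standard diagonal extraction produces $s_k\downarrow 0$ with $s_k g(s_k)\to 0$, which is precisely the claim.

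One small technical point I would address is that $g$ is a priori defined only almost everywhere, so the pointwise value $|f(\cdot,s_k)|_2^2$ requires a choice of representative. The standard fix is to select the $s_k$ from the set of Lebesgue points of $g$, which has full measure in $[0,T]$ and on which the pointwise value of $g$ agrees with its Lebesgue-point value. The main (and essentially only) obstacle is this measure-theoretic bookkeeping needed to extract a genuine numerical sequence rather than just a subset of positive measure; the underlying integrability argument itself is immediate, which is why the lemma is quoted from \cite{bjr} without a proof in the main exposition.
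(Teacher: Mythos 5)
Your proof is correct and is the standard argument; the paper itself quotes this lemma from \cite{bjr} without giving a proof, and the contradiction via $g(s)\geq c/s$ versus $g\in L^1$ near $s=0$ (with the a.e.\ representative handled by restricting to a full-measure set of $s$) is exactly the expected route. The only cosmetic point is that the lower bound $sg(s)\geq c$ obtained from the negation holds for almost every $s\in(0,s_0)$ rather than every $s$, which is all the integral comparison needs and which you already acknowledge in your final remark.
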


From  the definition of regular solution and  the classical Sobolev embedding theorem, it is clear  that
$$
(\rho,\nabla \rho, \rho_t, u, \nabla u) \in C(\mathbb{R}^3\times[0, T_*]).
$$
So it remains  to prove that
$$
(u_t, \text{div}\mathbb{T})(x,t) \in C(\mathbb{R}^3\times (0, T_*]).
$$

From  the proof of Theorem \ref{th2} in Section $3$, we know that (through a change of variable $\phi=\rho^{\frac{\delta-1}{2}}$),   system (\ref{eq:1.1})  can be written as
\begin{equation}
\begin{cases}
\label{rfv}
\displaystyle
\phi_t+u\cdot \nabla \phi+\frac{\delta-1}{2}\phi \text{div} u=0,\\[10pt]
\displaystyle
u_t+u\cdot\nabla u +\frac{2A\gamma}{\delta-1}\phi^{\frac{2r-\delta-1}{\delta-1}}\nabla \phi+\phi^2 Lu=\nabla \phi^2 \cdot Q(u).
 \end{cases}
\end{equation}
The solution $(\phi,u)$ satisfies the regularities in (\ref{reg11qq}) and $\phi \in C^1(\mathbb{R}^3\times[0, T_*])$. \\

\noindent\underline{Step 1}: The continuity of $u_t$.
We differentiate $(\ref{rfv})_2$ with respect to $t$ to get
\begin{equation}\label{zhd1}
\begin{split}
u_{tt}+\phi^2 Lu_t=-(\phi^2)_t Lu-(u\cdot\nabla u)_t -\frac{A\gamma}{\gamma-1}\nabla \Big(\phi^{\frac{2\gamma-2}{\delta-1}}\Big)_t+(\nabla \phi^2 \cdot Q(u))_t,
\end{split}
\end{equation}
which, along with (\ref{reg11qq}),  implies that
\begin{equation}\label{zhd11}
u_{tt}\in L^2([0,T_*];L^2).
\end{equation}
Applying the operator $\partial^\zeta_x$  $(|\zeta|=2)$ to $(\ref{zhd1})$,  multiplying the resulting equations by $\partial^\zeta_x u_t$ and integrating over $\mathbb{R}^3$, we have
\begin{equation}\label{zhd2}
\begin{split}
&\frac{1}{2} \frac{d}{dt}|\partial^\zeta_xu_t|^2_2+\alpha|\phi \nabla \partial^\zeta_x u_t|^2_2+(\alpha+\beta)|\phi \text{div} \partial^\zeta_x u_t|^2_2\\
=&\int  \Big( -\nabla \phi^2 \cdot \frac{\delta-1}{\delta}Q(\partial^\zeta_x u_t)-\big(\partial^\zeta_x(\phi^2Lu_t)-\phi^2 L\partial^\zeta_x u_t\big)\Big)\cdot   \partial^\zeta_x u_t\\
&+\int  \Big(-\partial^\zeta_x\big((\phi^2)_t Lu\big)-\partial^\zeta_x(u\cdot\nabla u)_t -\frac{A\gamma}{\gamma-1}\partial^\zeta_x\nabla \Big(\phi^{\frac{2\gamma-2}{\delta-1}}\Big)_t \Big)\cdot   \partial^\zeta_x u_t \\
&+\int  \partial^\zeta_x(\nabla \phi^2 \cdot Q(u))_t\cdot   \partial^\zeta_x u_t \\
 =: &\sum_{i=10}^{15}J_i.
\end{split}
\end{equation}
Now we analyze the terms $J_i$ $(i=10,\cdots, 15)$. By H\"older's inequality, Lemma \ref{lem2as} and Young's inequality, we have
\begin{equation}\label{zhd3}
\begin{split}
J_{10}=&\int  \Big( -\nabla \phi^2 \cdot \frac{\delta-1}{\delta}Q(\partial^\zeta_x u_t)\Big)\cdot   \partial^\zeta_x u_t\\
\leq & C|\phi\nabla^3 u_t|_2|\nabla^2 u_t|_2|\nabla \phi|_\infty\leq C| u_t|^2_{D^2}+\frac{\alpha}{20}|\phi\nabla^3 u_t|^2_2,\\
J_{11}=&\int -\big(\partial^\zeta_x(\phi^2Lu_t)-\phi^2 L\partial^\zeta_x u_t\big)\cdot   \partial^\zeta_x u_t\\
\leq & C\Big(|\phi\nabla^3 u_t|_2|\nabla^2 u_t|_2|\nabla \phi|_\infty+|\nabla \phi|^2_\infty|u_t|^2_{D^2}+|\nabla^2 \phi|_3|\phi \nabla^2 u_t|_6|u_t|_{D^2}\Big)\\
\leq & C| u_t|^2_{D^2}+\frac{\alpha}{20}|\phi\nabla^3 u_t|^2_2,\\
J_{12}=&\int  -\partial^\zeta_x\big((\phi^2)_t Lu\big)\cdot   \partial^\zeta_x u_t \\
\leq &C\Big(|\phi_t|_\infty|u_t|_{D^2} |\phi \nabla^4 u|_2+|\nabla^2 \phi|_3|Lu|_6| \phi_t|_\infty|u_t|_{D^2}\\
&\quad+|\phi \nabla^2 u_t|_6|\phi_t|_{D^2}|Lu|_3+|\nabla \phi|_\infty|\nabla \phi_t|_6|Lu|_{3}|u_t|_{D^2}\\
&\quad+|\phi \nabla^3 u|_6|\nabla \phi_t|_{3}|u_t|_{D^2}+|\nabla \phi|_\infty| \phi_t|_\infty|\nabla^3 u|_{2}|u_t|_{D^2}\Big)\\
\leq &C\Big(| u_t|^2_{D^2}+|\phi \nabla^4 u|^2_2+1\Big)+ \frac{\alpha}{20}|\phi\nabla^3 u_t|^2_2,\\
J_{13}=&\int  -\partial^\zeta_x(u\cdot\nabla u)_t \cdot   \partial^\zeta_x u_t \\
\leq & C\|u_t\|_2\|u\|_3| u_t|_{D^2}+\int -\big(u\cdot \nabla\big) \partial^\zeta_x u_t \cdot   \partial^\zeta_x u_t \\
\leq &  C\Big(1+| u_t|^2_{D^2}+|\nabla u|_\infty|\partial^\zeta_x u_t|^2_2\Big) \leq   C\Big(1+| u_t|^2_{D^2}\Big),
\end{split}
\end{equation}
\begin{equation}
\begin{split}
J_{14}=&\int   -\frac{A\gamma}{\gamma-1}\partial^\zeta_x\nabla \Big(\phi^{\frac{2\gamma-2}{\delta-1}}\Big)_t \cdot   \partial^\zeta_x u_t=\int   \frac{A\gamma}{\gamma-1}\partial^\zeta_x \Big(\phi^{\frac{2\gamma-2}{\delta-1}}\Big)_t \text{div}  \partial^\zeta_x u_t\\
\leq& C\Big(|\phi^{\frac{K}{2}-2}|_\infty|\nabla^2 \phi_t|_2|\phi \nabla^3 u_t|_2+|\phi^{\frac{K}{2}-3}|_\infty |\phi_t|_\infty |\nabla^2 \phi|_2|\phi \nabla^3 u_t|_2\\
&\quad+|\phi^{\frac{K}{2}-4}|_\infty|\phi_t|_\infty|\nabla \phi|_6|\nabla \phi|_3|\phi \nabla^3 u_t|_2+|\phi^{\frac{K}{2}-3}|_\infty|\nabla \phi_t|_2|\nabla \phi|_\infty|\phi \nabla^3 u_t|_2\Big)\\
\leq & C+\frac{\alpha}{20}|\phi\nabla^3 u_t|^2_2,
\end{split}
\end{equation}
and
\begin{equation}\label{zhd3jk}
\begin{split}
J_{15}=&\int  \partial^\zeta_x(\nabla \phi^2 \cdot Q(u))_t\cdot   \partial^\zeta_x u_t\\
\leq &C\Big(\|\nabla \phi\|^2_2| u_t|^2_{D^2}+\big(\|\nabla \phi\|_2|\nabla u_t|_3+\|u\|_3\|\phi_t\|_2\big)|\phi \nabla^2 u_t|_6\\
&+\big(\|\nabla \phi\|_2|\phi \nabla^3 u_t|_2+\|\nabla \phi\|_2|\phi \nabla^2 u_t|_6\big)| u_t|_{D^2}\\
&+\big(\|\nabla \phi\|_2\|\phi_t\|_2\|u\|_3+\|\phi_t\|_2|\phi \nabla^3 u|_6\big)| u_t|_{D^2}\Big)+\int  \partial^\zeta_x(\nabla \phi^2)_t \cdot Q(u)\cdot   \partial^\zeta_x u_t\\
\leq &C\Big(| u_t|^2_{D^2}+|\phi\nabla^4 u|^2_2\Big)+ \frac{\alpha}{20}|\phi\nabla^3 u_t|^2_2+J_{151},
\end{split}
\end{equation}
where
\begin{equation}\label{zhd15}
\begin{split}
J_{151}=&\int  \partial^\zeta_x(\nabla \phi^2)_t \cdot Q(u)\cdot   \partial^\zeta_x u_t\\
\leq& C \|\nabla \phi\|_2\|\phi_t\|_2\|u\|_3|u_t|_{D^2}+\int  \phi \partial^\zeta_x \nabla \phi_t \cdot Q(u)\cdot   \partial^\zeta_x u_t.
\end{split}
\end{equation}
Using integration by parts, the last term in (\ref{zhd15}) is estimated as:
\begin{equation}\label{zhd16}
\begin{split}
&\int  \phi \partial^\zeta_x \nabla \phi_t \cdot Q(u)\cdot   \partial^\zeta_x u_t\\
\leq&  C\Big(|\nabla \phi|_\infty|\phi_t|_{D^2}|\nabla u|_\infty|u_t|_{D^2}+|\phi_t|_{D^2}|\nabla^2u|_{3}|\phi \nabla^2 u_t|_6+|\phi\nabla^3 u_t|_2|\nabla u|_\infty\|\phi_t\|_2\Big)\\
\leq & C| u_t|^2_{D^2}+\frac{\alpha}{20}|\phi\nabla^3 u_t|^2_2.
\end{split}
\end{equation}
Then (\ref{zhd2}) reduces to
\begin{equation}\label{zhd5}
\begin{split}
&\frac{1}{2} \frac{d}{dt}|u_t|^2_{D^2}+\frac{\alpha}{2} |\phi \nabla^3 u_t|^2_2
 \leq C\Big(| u_t|^2_{D^2}+|\phi \nabla^4 u|^2_2+1\Big).
\end{split}
\end{equation}
Multiplying both sides of (\ref{zhd5}) with $s$ and integrating the resulting inequalities over $[\tau,t]$ for any $\tau \in (0,t)$, we have
\begin{equation}\label{zhd6}
\begin{split}
&t|u_t|^2_{D^2}+\int_{\tau}^t s|\phi \nabla^3 u_t|^2_2 \text{d}s
 \leq C\tau| u_t(\tau)|^2_{D^2}+C(1+t).
\end{split}
\end{equation}
According to the definition of the regular solution, we know that
$$
\nabla^2 u_t \in L^2([0,T_*]; L^2).
$$
Using Lemma \ref{1} to $\nabla^2 u_t $, there exists a sequence $s_k$ such that
$$
s_k\rightarrow 0, \quad \text{and}\quad s_k |\nabla^2 u_t(\cdot, s_k)|^2_2\rightarrow 0, \quad \text{as} \quad k\rightarrow+\infty.
$$
Choosing $\tau=s_k \rightarrow 0$ in (\ref{zhd6}), we have
\begin{equation}\label{zhd7}
\begin{split}
&t|u_t|^2_{D^2}+\int_{0}^t s|\phi \nabla^3 u_t|^2_2 \text{d}s
 \leq C(1+t),
\end{split}
\end{equation}
then
\begin{equation}\label{zhd12}
t^{\frac{1}{2}}u_t \in L^\infty([0,T_*]; H^2).
\end{equation}
The classical  Sobolev embedding theorem gives
 \begin{equation}\label{yhn}
\begin{split}
L^\infty([0,T];H^1)\cap W^{1,2}([0,T];H^{-1})\hookrightarrow C([0,T];L^q),
\end{split}
\end{equation}
for any $q\in (3,6]$.  From (\ref{zhd11}) and (\ref{zhd12}) we have
$$
tu_t \in C([0,T_*];W^{1,4}),
$$
which implies that
$$
u_t \in C(\mathbb{R}^3\times (0,T_*] ).
$$
\\
\underline{Step 2}: The continuity of $\text{div}\mathbb{T}$.  Denote  $\mathbb{N}=\phi^2 Lu-\nabla \phi^2 \cdot Q(u)$.
From equations $(\ref{rfv})_2$, regularities (\ref{reg11qq}) and (\ref{zhd12}), it is easy to show that
$$t\mathbb{N} \in L^\infty([0,T_*]; H^2).$$
Due to
$$
\mathbb{N}_t \in L^2([0,T_*]; L^2),
$$
 we obtain from (\ref{yhn})  that
$$
t \mathbb{N}\in C([0,T_*];W^{1,4}),
$$
which implies that
$$
\mathbb{N} \in C(\mathbb{R}^3\times (0, T_*]).
$$
Since $\rho \in C(\mathbb{R}^3\times[0, T_*])$ and $\text{div}\mathbb{T}=\rho \mathbb{N}$, we immediately obtain the desired conclusion.

In summary,  we have shown that the regular solution that we obtained is indeed a classical one in $\mathbb{R}^3\times[0, T_*]$ to  Cauchy problem (\ref{eq:1.1})-(\ref{eq:2.211}).

\bigskip

{\bf Acknowledgement:} The research of  Y. Li and S. Zhu was supported in part
by National Natural Science Foundation of China under grant 11231006 and Natural Science Foundation of Shanghai under grant 14ZR1423100. Y. Li was also supported by Shanghai Committee of Science and Technology under grant 15XD1502300. S. Zhu was also supported by China Scholarship Council. The research of R. Pan was partially supported
by National Science Foundation under grant DMS-1108994.

\bigskip

\end{document}